\newcommand{\NotForConf}[1]{\ifthenelse{\boolean{Conference}}{}{#1}}
\newcommand{\IfConf}[2]{\ifthenelse{\boolean{Conference}}{#1}{#2}}
\def\startmodif{\color{black}}
\def\stopmodif{\color{black}\normalcolor}
\definecolor{mygreen}{RGB}{0,128,0}
\newcommand{\sean}[1]{{\color{black} #1}}
\newcommand{\seannew}[1]{{\color{black} #1}}
\newcommand{\A}{\mathcal{A}}
\newcommand{\tb}{\bar{\tau}}
\newcommand{\ton}{\tau_1}
\newcommand{\ttw}{\tau_2}
\newcommand{\X}{\mathcal{X}}
\newcommand{\one}{\mathbf 1}
\newcommand{\e}{\varepsilon}
\newcommand{\mat}[1]{\left[\begin{array}{c} #1 \end{array}\right]}
\newcommand{\wt}{\widetilde}
\renewcommand{\S}{\mathcal{S}}
\renewcommand{\hbar}{\overline{h}}
\newcommand{\G}{\mathcal{G}}
\renewcommand{\cal}{\mathcal}
\newcommand{\EndPF}{\hfill $\blacksquare$}
\newcommand{\HS}{{\cal H}}
\newcommand{\reals}{\mathbb{R}}
\newcommand{\ball}{\mathbb{B}}
\newcommand{\tto}{\rightrightarrows}
\newcommand{\realsgeq}{{\reals_{\geq 0}}}
\newcommand{\dom}{\mathop{\rm dom}\nolimits}      
\newcommand{\nats}{\mathbb{N}}
\newcommand{\non}{\nonumber}
\newtheorem{helptheorem}{Theorem}[section]     
\newtheorem{helplemma}[helptheorem]{Lemma}     
\newtheorem{helpcorollary}[helptheorem]{Corollary}     
\newtheorem{helpexample}[helptheorem]{Example}     
\newtheorem{helpproposition}[helptheorem]{Proposition}     
\newtheorem{helpremark}[helptheorem]{Remark}     
\newtheorem{helpdefinition}[helptheorem]{Definition}
\newtheorem{helpassumption}[helptheorem]{Assumption}
\newtheorem{helpstassumption}[helptheorem]{Standing Assumption}
\newenvironment{theorem}     
{\vskip.1cm\begin{helptheorem}\it}
{\end{helptheorem} \vskip.1cm}   
\newenvironment{lemma}     
{\vskip.1cm\begin{helplemma}\it}     
{\end{helplemma}\vskip.1cm}
\newenvironment{definition}
{\vskip.1cm\begin{helpdefinition}}     
{\end{helpdefinition}\vskip.1cm}   
\newenvironment{proof}[1][Proof]{\begin{trivlist}
\item[\hskip \labelsep {\bfseries #1}]}{\end{trivlist}}
\begin{document}
%
\NotForConf{
\ititle{Robust Asymptotic Stability of Desynchronization in Impulse Coupled Oscillators}
\iauthor{
  Sean Phillips \\
  {\normalsize seaphill@ucsc.edu} \\
  Ricardo G. Sanfelice\\
  {\normalsize ricardo@ucsc.edu}}
\idate{\today{}} 
\iyear{2015}
\irefnr{001}
\makeititle}{
\title{}
\date{}
\author{Sean Phillips and Ricardo G. Sanfelice\thanks{Department of Computer Engineering, University of California, Santa Cruz, CA 95064.
      Email: {\tt\small seaphill,ricardo@ucsc.edu}. 
      This research has been partially supported by the National Science Foundation under CAREER Grant no. ECS-1150306 and by the Air Force Office of Scientific Research under Grant no. FA9550-12-1-0366.
}}}

\newpage
\tableofcontents
\newpage

\maketitle
\begin{abstract}
The property of desynchronization in an all-to-all network of homogeneous impulse-coupled oscillators is studied. Each impulse-coupled oscillator is modeled as a hybrid system with a single timer state that self-resets to zero when it reaches a threshold, at which event all other impulse-coupled oscillators adjust their timers following a common reset law. In this setting, desynchronization is considered as each impulse-coupled oscillator's timer having equal separation between successive resets.
We show that, for the considered model, desynchronization is an asymptotically stable property. For this purpose, we recast desynchronization as a set stabilization problem and employ Lyapunov stability tools for hybrid systems. Furthermore, several perturbations are considered showing that desynchronization is a robust property. Perturbations on both the continuous and discrete dynamics are considered. Numerical results are presented to illustrate the main contributions.
\end{abstract} 
\section{Introduction}

Impulse-coupled oscillators are multi-agent systems with state variables consisting of timers that evolve continuously until a state-dependent event triggers an instantaneous update of their values. 
Networks of such oscillators have been employed to model the dynamics of a wide 
range of biological and engineering systems. In fact, impulse-coupled oscillators 
have been used to model groups of fireflies \cite{Mirollo.90.SIAMJAM.BiologicalOscillators}, 
spiking neurons \cite{Pikovsky.ea.03.BiologicalOscillators,gerstner2002spiking},  
muscle cells \cite{Peskin.75.BiologicalOscillators}, wireless networks 
\cite{hong2010cooperative}, and sensor networks \cite{Liu05adynamic}. \startmodif
 With synchronization being a property of particular interest, \stopmodif 
such 
complex networks have been found to coordinate the values of their state variables 
by sharing information only at the times the events/impulses occur \cite{Mirollo.90.SIAMJAM.BiologicalOscillators,Abbott1993}.
 
The opposite of synchronization is {\em desynchronization}. In simple words, desynchronization in multi-agent systems is the notion that the agents' periodic actions are separated ``as far apart'' as possible in time.  Desynchronization is similar to clustering or splay-state configurations, and is sometimes referred in the literature as inhibited behavior \cite{mauroy:037122,glass1988clocks}.
\startmodif For impulse-coupled oscillators, desynchronization is given as the behavior in which the separation between all of the timers impulses is equal \cite{Patel.Desync2007}.
\stopmodif This behavior has been found to be present in communication schemes in fish \cite{Benda.Neuron} and in  networks of spiking neurons \cite{Pfurtscheller19991842,1997Natur.390.70S}. Desynchronization of oscillators has recently been shown to be of importance in the understanding of Parkinson's disease \cite{Mabi.Moehlis2010,Majtanik.Dolan2004}, in
the design of algorithms that limit the amount of overlapping data transfer and data loss in wireless digital networks \cite{hong2010cooperative}, and in the design of round-robin scheduling schemes for  sensor networks \cite{Liu05adynamic}.



Motivated by the applications mentioned above and the lack of a full understanding 
of desynchronization in multi-agent systems, this paper pertains to the study of 
the dynamical properties of desynchronization in a network of impulse-coupled 
oscillators with an all-to-all communication graph.  The uniqueness of the 
approach emerges from the use of hybrid systems tools, which not only conveniently 
capture the continuous and impulsive behavior in the networks of interest, but 
also are suitable for analytical study of asymptotic stability and robustness to 
perturbations.

More precisely, the dynamics of the proposed hybrid system capture the (linear) 
continuous evolution of the states as well their impulsive/discontinuous behavior 
due to state triggered events. 
Analysis of the asymptotic \startmodif behavior \stopmodif of the trajectories (or solutions) to these systems is performed using the framework of hybrid systems introduced in \cite{teel2012hybrid,Goebel.ea.09.CSM}. To this end, we recast the study of desynchronization as a set stabilization problem. 
Unlike synchronization, for which the set of points to stabilize is obvious,
the complexity of desynchronization requires first to determine such a collection of points, which 
we refer to as the {\em desynchronization set}. We propose an algorithm to compute such set of points. Then, using Lyapunov stability theory for hybrid systems, we prove that the desynchronization set is asymptotically stable by defining a Lyapunov-like function as the distance between the state and (an inflated version of) the desynchronization set. In our context, asymptotic stability of the desynchronization set implies that the distance between the state and the desynchronization set converges to zero as the amount of time and the number of jumps get large. 
Using the proposed Lyapunov-like function and invoking an invariance principle, the basin of attraction is characterized and shown to be 
the entire state space minus a set of measure zero, which turns out to actually be an exact estimate of the basin of attraction.
Furthermore, also exploiting the availability of a Lyapunov-like function, we analytically characterize the time for the solutions to reach a neighborhood of the desynchronization set.  In particular, this characterization provides key insight for the design of algorithms used in applications in which desynchronization is crucial, such as wireless digital networks and sensor networks.

The asymptotic stability property of the desynchronization configuration is shown 
to be robust to several types of perturbations. 
%
The perturbations studied here include a generic perturbation in the form of an inflation of the dynamics of the proposed hybrid system model of the network of interest and several kinds of perturbations on the timer rates.
Using the tools presented in \cite{teel2012hybrid,Goebel.ea.09.CSM}, we analytically characterize the effect of these perturbations on the already established asymptotic stability property of the desynchronization set. In particular, these perturbations capture situations where the agents in the network are heterogeneous due to having differing timer rates, threshold values, and update laws.
To verify the analytical results, we simulate networks of impulse-coupled oscillators under several classes of perturbations. Specifically, we show numerical results when perturbations
affect the update laws and the timer rates.  \IfConf{Complete numerical results can be found in an extended version of this paper \cite{Phillips2013TechReport}.}


The remainder of this paper is organized as follows. Section~\ref{sec:hs} is devoted to hybrid modeling of networks of impulse-coupled oscillators. Section~\ref{sec:AN} introduces an algorithm to determine the desynchronization set. 
Section~\ref{sec:lyapunov} presents the stability results while the time to convergence is characterized in Section~\ref{sec:timetoconverge}. The robustness results are in Section~\ref{sec:robustness}. 
Section~\ref{sec:numerics} presents numerical results illustrating our results. Final remarks are given in Section~\ref{sec:conclusion}.


\noindent
{\bf Notation}
\IfConf{
The set $\reals$ denotes the space of real numbers.
The set $\reals^n$ denotes the $n$-dimensional Euclidean space.
The set $\mathbb{N}$ denotes the natural numbers including zero, i.e., $\mathbb{N} = \{0, 1, 2, ... \}$.
\startmodif For an interval ${K} = [0,1]$ and $n \in \mathbb{N}\setminus\{0\}$, ${K}_n$  is the $n$-product of the interval ${K}$, i.e., ${K}_n = [0,1]\times[0,1]\times \ldots \times[0,1]$. \stopmodif
The set $\mathbb{B}$ is the closed unit ball centered around the origin in Euclidean space.
The symbol $\one$ represents the $N$-dimensional column vector of ones.
The symbol $\underline\one$ represents the $N\times N$ matrix full of ones.
The symbol ${\bf I}$ denotes the $N \times N$ identity matrix.
 Given a closed set $\A \subset\reals^n$ and $x \in \reals^n$, $|x|_\A := \min_{z \in \A} |x - z|$.
Given $x \in \reals^{n}$, $|x|$ denotes the Euclidean norm of $x$.
The $c$-level set of $V : \dom V \to \reals$ is given by $L_V(c) := \{x \in \dom V : V(x) = c \}$.
}{
\begin{itemize}
\item $\reals$ denotes the space of real numbers.
\item $\reals^n$ denotes the $n$-dimensional Euclidean space.
\item $\mathbb{N}$ denotes the natural numbers including zero, i.e., $\mathbb{N} = \{0, 1, 2, ... \}$.
\item \startmodif For an interval $\mathcal{K} = [0,1]$ and $n \in \mathbb{N}\setminus\{0\}$, $\mathcal{K}_n$  is the $n$-product of the interval $\mathcal{K}$, i.e., $\mathcal{K}_n = [0,1]\times[0,1]\times \ldots \times[0,1]$. \stopmodif
\item $\mathbb{B}$ is the closed unit ball centered around the origin in Euclidean space.
\item $\one$ is an $N$ column vector of ones.
\item $\underline\one$ is an $N\times N$ matrix full of ones.
\item ${\bf I}$ is the $N \times N$ identity matrix.
\item Given a closed set $\A \subset\reals^n$ and $x \in \reals^n$, $|x|_\A := \min_{z \in \A} |x - z|$.
\item Given $x \in \reals^{n}$, $|x|$ denotes the Euclidean norm of $x$.
\item The $c$-level set of $V : \dom V \to \reals$ is given by $L_V(c) := \{x \in \dom V : V(x) = c \}$,
\end{itemize}
}
%

\section{Hybrid System Model of Impulse-Coupled Oscillators}
\label{sec:hs}
\subsection{Mathematical Model}\label{sec:hybridmodel}
In this paper, we consider a model of $N$ impulse-coupled oscillators. Each impulse-coupled oscillator has a continuous state ($\tau_i$ for the $i$-th oscillator) defining its internal timer. Once the timer of any oscillator reaches a threshold ($\tb$), it triggers an impulse and is reset to zero. At such an event, all the other impulse-coupled oscillators \startmodif rescale their timer by a factor given \stopmodif by $(1+ \varepsilon)$ times the value of their timer,
where $\varepsilon \in (-1,0)$.\footnote{Cf. the model for synchronization in \cite{Mirollo.90.SIAMJAM.BiologicalOscillators} where $\varepsilon > 0$.} 
Figure~\ref{fig:example} shows a trajectory of two impulse-coupled oscillators {with states} $\ton$ and $\ttw$. In this figure, the \startmodif dark \stopmodif  red circles indicate when a timer state has reached the threshold and, thus, resets to zero. The \startmodif light \stopmodif green circles indicate when an oscillator is externally reset and, hence,  decreases its timer by $(1+\varepsilon)$ times its current state. \stopmodif

\IfConf{
\begin{figure}
\centering
\psfrag{tau1}[][][.7]{\hspace{0cm}$\ton$}
\psfrag{tau2}[][][.7]{\hspace{0cm}$\ttw$}
\psfrag{tb}[][][.8]{$\tb$}
\psfrag{t1t2}[][][.8][-90]{\hspace{-.5cm}$\tau_{1},\ttw$}
\psfrag{t0}[][][.6]{\hspace{-.2cm}$\Delta t_{0}$}
\psfrag{t1}[][][.6]{\hspace{-.2cm}$\Delta t_{1}$}
\psfrag{t2}[][][.6]{\hspace{-.2cm}$\Delta t_{2}$}
\psfrag{t3}[][][.6]{\hspace{-.2cm}$\Delta t_{3}$}
\psfrag{t4}[][][.6]{\hspace{-.2cm}$\Delta t_{4}$}
\psfrag{t5}[][][.6]{\hspace{-.2cm}$\Delta t_{5}$}
\psfrag{et1}[][][.6]{$(\varepsilon+1)\tau_1$}
\psfrag{et2}[][][.6]{$(\varepsilon+1)\tau_2$}
\psfrag{t}[][][.8]{$t$ [seconds]}
\psfrag{T8}[][][.8]{$t_{8}$}
\includegraphics[width = .43\textwidth]{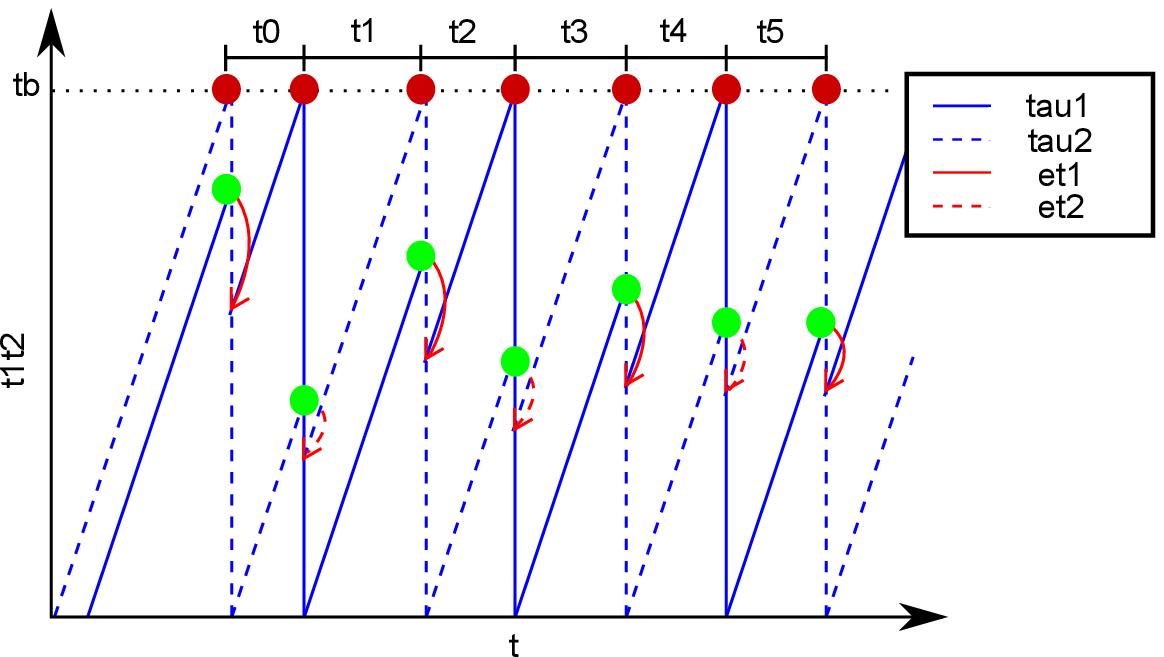}
\caption{\stopmodif An example of two {impulse-coupled }oscillators reaching desynchronization (as $\Delta t_{i}$ converges to a constant.) The internal resets (\startmodif dark \stopmodif red circles) map the timers to zero. The external resets (\startmodif light \stopmodif green circles) map the timers to a fraction $(1+\e)$ of their current value.}
\label{fig:example}
\end{figure}
}{
\begin{figure}
\centering
\psfragfig*[width = .7\textwidth]{Figures/example}{
\psfrag{tau1}[][][.9]{\hspace{.1cm}$\ton$}
\psfrag{tau2}[][][.9]{\hspace{.1cm}$\ttw$}
\psfrag{tb}[][][1]{$\tb$}
\psfrag{t1t2}[][][1][90]{\hspace{-.5cm}$\tau_{1},\ttw$}
\psfrag{t0}[][][.8]{$\Delta t_{0}$}
\psfrag{t1}[][][.8]{$\Delta t_{1}$}
\psfrag{t2}[][][.8]{$\Delta t_{2}$}
\psfrag{t3}[][][.8]{$\Delta t_{3}$}
\psfrag{t4}[][][.8]{$\Delta t_{4}$}
\psfrag{t5}[][][.8]{$\Delta t_{5}$}
\psfrag{et1}[][][.9]{$(\varepsilon+1)\tau_1$}
\psfrag{et2}[][][.9]{$(\varepsilon+1)\tau_2$}
\psfrag{t}[][][1]{$t$ [seconds]}
\psfrag{T8}[][][.8]{$t_{8}$}}
\caption{An example of two {impulse-coupled }oscillators reaching desynchronization (as $\Delta t_{i}$ converges to a constant.) The internal resets ( dark red circles) map the timers to zero. The external resets ( light green circles) map the timers to a fraction $(1+\e)$ of their current value.}
\label{fig:example}
\end{figure}}

According to this outline of the model, the dynamics of the impulse-coupled oscillators involve impulses and timer resets, which are treated as true discrete events and instantaneous updates, while the smooth evolution of the timers before/after these events define the continuous dynamics.  We follow the hybrid formalism of \cite{teel2012hybrid,Goebel.ea.09.CSM}, where a hybrid system is given by four objects $(C,f,D,G)$ defining its \textit{data}:
\begin{itemize}
\item \textit{Flow set:} a set $C \subset \reals^{N}$ specifying the points where flows are possible (or continuous evolution).
\item \textit{Flow map:} a single-valued map $f: \reals^{N} \to \reals^{N}$ defining the flows.
\item \textit{Jump set:} a set $D \subset \reals^{N}$ specifying the points where jumps are possible (or discrete evolution).
\item \textit{Jump map:} a set-valued map $G: \reals^{N} \rightrightarrows\reals^{N}$ defining the jumps.
\end{itemize}
A hybrid system capturing the dynamics of the impulse-coupled oscillators is denoted as $\HS_N := (C,f,D,G)$ and can be written in the compact form
\begin{equation}\HS_N: \qquad \tau \in \reals^{N} \qquad \left\{ \begin{array}{llll} \dot{\tau} &=& f(\tau) &\quad \tau \in C \\ \tau^{+} &\in& G(\tau) & \quad \tau \in D \end{array}\right. , \label{eqn:HS}\end{equation}
where $N \in \nats \setminus \{0,1\}$ is the number of impulse-coupled oscillators. The state of $\HS_{N}$ is given by
\IfConf{$\tau:= \left[  \ton \ \ \ttw \ \ \ldots \ \ \tau_{N} \right]^\top \in P_N := [0,\tb]^{N}.$}{$$
\tau:= \left[  \ton \ \ \ttw \ \ \ldots \ \ \tau_{N} \right]^\top \in P_N := [0,\tb]^{N} .
$$}
The flow and jump sets are defined to constrain the evolution of the timers. The flow set is defined by
\begin{equation}
\startmodif C := P_N, \stopmodif
\label{eqn:flawiest}
\end{equation}
where $I := \{1,2, \ldots , N\}$ and $\tb > 0$ is the threshold.
\startmodif During flows, an internal clock gradually increases based on the homogeneous rate, $\omega$. \stopmodif Then, the flow map is defined as 
\IfConf{$f(\tau) := \omega\one$ for all $\tau \in C$}{$$
f(\tau) := \omega\one \qquad \forall \tau \in C
$$}
with $\omega > 0$ \sean{defining the natural frequency of each impulse-coupled oscillator}.
The impulsive events are captured by a jump \sean{set $D$ and a jump} map $G$. Jumps occur when the state is in the jump set $D$ defined as
\begin{equation}
D := \left\{ \tau \in P_N : \ \exists i \in I \ \mbox{s.t.} \ \tau_i = \tb \right\} . 
\label{eqn:D}
\end{equation}
From such points, the $i$-th timer is reset to zero and forces a jump of all other timers. Such discrete dynamics are captured by the following jump map: for each $\tau \in D$ define
$
G(\tau) = \left[ g_1(\tau) \ \ g_2(\tau)\ \ \ldots \ \ g_N(\tau)  \right]^\top,
$ 
where, for each $i \in I$, 
\begin{equation}
g_i (\tau) = \left\{ \begin{array}{l} 0 \qquad \qquad \ \ \ \ \mbox{if } \tau_{i} = \tb, \tau_r < \tb \ \ \forall r \in I\setminus \{i\} \\ 
\{0 , \tau_{i}(1+\varepsilon) \} \ \mbox{if } \tau_{i} = \tb \ \exists r \in I \setminus \{i\} \  \mbox{s.t.} \  \tau_r = \tb \\ 
(1+\varepsilon)\tau_{i} \qquad \  \mbox{if } \tau_i < \tb \ \exists r \in I \setminus \{i\} \  \mbox{s.t.} \  \tau_r = \tb\end{array} \right. \label{eqn:gi}
\end{equation}
with parameters $\varepsilon \in (-1,0)$ and $\tb > 0$; for $\tau \in D$, $g_i$ is not empty. 
When a jump is triggered, the state $\tau_i$ jumps according to the $i$-th component of the jump map $g_i$. When a state reaches the threshold $\tb$, it is reset to zero only when all other states are less than that threshold; otherwise, if multiple timers reach the threshold simultaneously, the jump map is set valued to indicate that either $g_i(\tau) = 0$ or $g_i(\tau) = (1+\varepsilon)\tau_{i}$ is possible. This is to ensure that the jump map satisfies the regularity conditions outlined in Section~\ref{sec:ModelForAnalysis}.\footnote{In \cite{mauroy:037122}, a more general flow map and a jump map incrementing $\tau_i$ by $\e > 0$ are considered.}


\NotForConf{For example, consider the case $N=2$ the hybrid system $\HS_N = (C,f,D,G)$ has state given by
$$\tau=\left[ \begin{array}{c} \tau_{1} \\ \tau_{2}\end{array} \right] \in P_2 := [0,\tb]\times[0,\tb] .$$ 
The states $\tau_{1}$ and $\tau_{2}$ are the timers for both of the oscillators. The hybrid system $\HS_2$ has the following data:


$$
\HS_2 = \left\{\begin{array}{ll} C = P_2, & \qquad f(\tau) = \left[ \begin{array}{c} 1
\\ 1 \end{array}\right] \forall \tau \in C, \\ \noalign{\medskip} 
D = \left\{ \tau \in P_2 \ : \ \exists i \in \{1,2\} \ s.t. \ \tau_{i} = \tb \right\}, & \qquad
G(\tau) = \left[ \begin{array}{c} g_1(\tau) \\ g_2(\tau) \end{array} \right] \forall \tau \in D , \end{array} \right. 
$$
where the functions $g_1$ and $g_2$ are defined as
$$
g_1 (\tau) = \left\{ \begin{array}{ll}  0 & \mbox{if } \tau_{1} = \tb, \tau_2 < \tb \\ \{0 , \tau_{1}(1+\varepsilon) \} & \mbox{if } \tau_{1} = \tb, \tau_2 = \tb \\
(1+\varepsilon)\tau_{1} & \mbox{if } \tau_1 < \tb, \tau_2 = \tb 
\end{array} \right.
\qquad \qquad  g_2 (\tau) = \left\{ \begin{array}{ll}  0 & \mbox{if } \tau_{2} = \tb, \tau_1 < \tb \\ \{0 , \tau_{2}(1+\varepsilon) \} & \mbox{if } \tau_{2} = \tb, \tau_1 = \tb \\
(1+\varepsilon)\tau_{2} & \mbox{if } \tau_2 < \tb, \tau_1 = \tb 
\end{array} \right. .
$$}
\subsection{Basic Properties of $\HS_{N}$}\label{sec:BasicCond}
\label{sec:ModelForAnalysis}
\subsubsection{Hybrid Basic Conditions}
To apply analysis tools for hybrid systems in \cite{teel2012hybrid}, which will be summarized in Section~\ref{sec:analysis},
the data of the hybrid system $\HS_{N}$
must meet certain mild conditions. 
These conditions, referred to as the {\em hybrid basic conditions}, are as follows:
\begin{enumerate}
\item[A1)] $C$ and $D$ are closed sets in $\reals^N$.
\item[A2)] $f: \reals^N\to\reals^N$ is continuous on $C$.
\item[A3)] $G :\reals^N\tto\reals^N$ is an outer semicontinuous\footnote{A set-valued mapping $G : \reals^N\tto\reals^N$ is {\em outer semicontinuous} if its graph $\{(x,y): x \in \reals^N, y \in G(x)\}$ is closed, see \cite[Lemma 5.10]{teel2012hybrid} and \cite{RockafellarWets98}.
}
 set-valued mapping, locally bounded on $D$, and such that $G(x)$ is nonempty for each $x \in D$.
\end{enumerate}
\begin{lemma}\label{lem:BasicConds}
$\HS_{N}$ satisfies the hybrid basic conditions.
\end{lemma}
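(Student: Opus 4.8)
The plan is to verify the three hybrid basic conditions A1)--A3) in turn; only the outer semicontinuity of the jump map will require real work. For A1), I would note that $C = P_N = [0,\tb]^N$ is a finite Cartesian product of the closed interval $[0,\tb]$ and hence closed, while $D = P_N \cap \bigcup_{i \in I} H_i$ with $H_i := \{\tau \in \reals^N : \tau_i = \tb\}$ a closed hyperplane, so $D$ is the intersection of the closed set $P_N$ with a finite union of closed sets, hence closed. Condition A2) is immediate because $f(\tau) = \omega\one$ is constant, thus continuous on all of $\reals^N$ and in particular on $C$.

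For A3) I would treat nonemptiness, local boundedness, and outer semicontinuity separately. Nonemptiness on $D$ follows from a short case check: fixing $\tau \in D$ and $i \in I$, if $\tau_i = \tb$ then either every other timer is below the threshold (first branch of \eqref{eqn:gi}) or some other timer equals $\tb$ (second branch), and if $\tau_i < \tb$ then, since $\tau \in D$, some $\tau_r = \tb$ with necessarily $r \neq i$ (third branch); exactly one branch applies and each returns a nonempty value, so $G(\tau) = g_1(\tau) \times \cdots \times g_N(\tau)$ is nonempty. Local boundedness is clear since every branch returns values in $[0,\tb]$: for $\e \in (-1,0)$ one has $(1+\e)\tau_i \in [0,\tb]$ whenever $\tau_i \in [0,\tb]$, so $G(D) \subseteq P_N$ is bounded.

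The crux is outer semicontinuity, i.e., closedness of the graph of $G$. Because $G$ is the Cartesian product of the scalar maps $g_i$ and each $g_i$ is locally bounded, it suffices to show each $g_i$ is outer semicontinuous: for a bounded sequence $y_k = (y_{1,k}, \dots, y_{N,k}) \in G(\tau_k)$ with $\tau_k \to \bar\tau$ and $y_k \to \bar y$, one can pass to the limit coordinatewise. I would verify outer semicontinuity of $g_i$ through the sequential criterion, taking $\tau_k \to \bar\tau$ in $D = \dom g_i$, $y_k \in g_i(\tau_k)$, $y_k \to \bar y$, and splitting on the branch containing $\bar\tau$. If $\bar\tau_i = \tb$ and $\bar\tau_r < \tb$ for all $r \neq i$, continuity forces $\tau_k$ into the first branch for large $k$, whence $y_k = 0 = g_i(\bar\tau)$; if $\bar\tau_i < \tb$ and some $\bar\tau_r = \tb$, then $\tau_{i,k} < \tb$ eventually and $y_k = (1+\e)\tau_{i,k} \to (1+\e)\bar\tau_i = g_i(\bar\tau)$.

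The delicate case, which is exactly why the jump map is defined to be set-valued, is $\bar\tau_i = \tb$ together with $\bar\tau_r = \tb$ for some $r \neq i$. Here nearby points $\tau_k$ may fall in any of the three branches, so each $y_k$ equals either $0$ or $(1+\e)\tau_{i,k}$; the only possible limits are $0$ and $(1+\e)\tb$, and both lie in $g_i(\bar\tau) = \{0, (1+\e)\tb\}$. Thus $\bar y \in g_i(\bar\tau)$ in every case, each $g_i$ has closed graph, and $G$ is outer semicontinuous, completing A3). I expect this final case to be the only real obstacle: a single-valued choice in the second branch of \eqref{eqn:gi} would discard one of the two admissible limits and destroy closedness of the graph, so the set-valuedness there is not cosmetic but necessary for the hybrid basic conditions to hold.
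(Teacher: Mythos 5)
Your proposal is correct and follows essentially the same route as the paper: closedness of $C$ and $D$ for A1), continuity of the constant flow map for A2), and, for A3), nonemptiness, boundedness, and outer semicontinuity of $G$ established componentwise by showing each $g_i$ has closed graph, with the set-valued branch at simultaneous-threshold points being precisely what makes the graph closed. The only cosmetic difference is that the paper exhibits $\mbox{gph}(g_i)$ directly as a union of two closed sets, whereas you verify closedness via the sequential criterion with a case analysis on the limit point; these are two presentations of the same argument.
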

\IfConf{
\begin{proof}
For a proof of Lemma~\ref{lem:BasicConds} see \cite{Phillips2013TechReport}.
\end{proof}
}{
\begin{proof} Condition (A1) is satisfied since $C$ and $D$ are closed. The function $f$ is constant and therefore continuous on $C$, satisfying (A2). With $G$ as in \eqref{eqn:gi}, the graph of each $g_i$ is defined as 
\IfConf{\begin{align*}
\mbox{gph}(g_i) &= \{(x,y) : y \in g_i(x), x \in D\} \\
&= \{(x,y) : y = 0, x_{i}  = \tb, x_{r} \leq \tb \ \forall r \neq i, x \in D\}  \\ & \cup \{(x,y) : y = (1+\varepsilon)x_{i}, x_{i} \leq \tb \ \exists x_{r} = \tb, x \in D\} 
\end{align*}
}{
\begin{align*}
\mbox{gph}(g_i) &= \{(x,y) : y \in g_i(x), x \in D\} \\
&= \{(x,y) : y = 0, x_{i}  = \tb, x_{r} \leq \tb \ \forall r \neq i, x \in D\} \cup \{(x,y) : y = (1+\varepsilon)x_{i}, x_{i} \leq \tb \ \exists x_{r} = \tb, x \in D\} 
\end{align*}
}
which is closed. Then the set-valued mapping $G$ is outer semicontinuous. By definition, $G$ is bounded and nonempty for each $\tau \in D$, and hence it satisfies (A3). \EndPF
\end{proof}}

Note that satisfying the hybrid basic conditions \sean{implies that $\HS_N$ is well-posed \cite[Theorem 6.30]{teel2012hybrid}, which automatically gives robustness to vanishing state disturbances; see \cite{teel2012hybrid,Goebel.ea.09.CSM}.
Section~\ref{sec:robustness} considers different types of perturbations that $\HS_N$ \startmodif can withstand. \stopmodif}

\subsubsection{Solutions to $\HS_N$}
\IfConf{
Solutions to general hybrid systems $\HS$ ($\HS_N$ in particular)
can evolve continuously (flow) and/or discretely (jump) depending on the continuous and discrete dynamics and the sets where those dynamics apply. We treat the number of jumps as an independent variable $j$ and the time of flow by the independent variable $t$. More precisely, we parameterize the state by $(t,j)$. Solutions to $\HS$ will be given by {\em hybrid arcs} on {\em hybrid time domains} \cite{teel2012hybrid,Goebel.ea.09.CSM}.
}{}
\IfConf{In the context of hybrid systems, a subset of $\reals_{\geq 0}\times \nats$ is a hybrid time domain if it is of the form $\cup_{j = 0}^{J}([t_{j},t_{j+1}] \times \{j\})$, with $0 = t_{0} \leq t_{1} \leq t_{2} \leq \hdots$, where $J \in \nats \cup \{\infty\}$.}{
Solutions to generic hybrid systems $\HS$ with state $x \in \reals^n$ will be given by {\em hybrid arcs} on {\em hybrid time domains} defined as follows:
\begin{definition}{(hybrid time domain)}
\label{hybrid time domain definition}                
A subset $S\subset\realsgeq\times\nats$ is a {\it compact hybrid time domain} if       
$$S=\bigcup_{j=0}^{J-1} \left([t_j,t_{j+1}],j\right)$$
for some finite sequence of times $0=t_0\leq t_1 \leq t_2\ ...\leq t_J$.  A subset $S\subset\realsgeq\times\nats$ is a {\it hybrid time domain} if for all $(T,J)\in S$, $S\ \cap\ \left( [0,T]\times\{0,1,...J\}\right)$ is a compact hybrid time domain.              
\end{definition}
\begin{definition}{(hybrid arc)}
\label{hybrid arc}                
 A function $x:\dom x\to\reals^n$ is a {\it hybrid arc} if $\dom x$ is a hybrid time domain and if for each $j\in\nats$, the function $t\mapsto x(t,j)$ is locally absolutely continuous. 
\end{definition}
\begin{definition}{(solution)}\label{def:solution}
\label{solution}                
A hybrid arc $x$ is a {\em solution to the hybrid system $\HS$} if $x(0,0) \in C \cup D$ and:
\begin{enumerate}
\item[(S1)] For all $j\in\nats$ and almost all $t$ such that 
$(t,j)\in \dom x$,  
\begin{equation} \non x(t,j) \in C, \ \ \ \dot{x}(t,j) = f(x(t,j))\ . \end{equation} 
\item[(S2)] For all $(t,j)\in \dom x$ such that $(t,j+1)\in \dom x$, 
\begin{equation} \non x(t,j) \in D, \ \ \ x(t,j+1)\in G(x(t,j))\ . \end{equation} 
\end{enumerate}
\end{definition}
}
\IfConf{}{
A solution $x$ is said to be {\it nontrivial} if $\dom x$ contains at least one point different from $(0,0)$, {\it maximal} if there does not exist a solution $x'$ such that $x$ is a truncation of $x'$ to some proper subset of $\dom x'$, {\it complete} if $\dom x$ is unbounded, and {\it Zeno} if it is complete but the projection of $\dom x$ onto $\realsgeq$ is bounded.
}

\begin{lemma}\label{lem:solutions2HS}
From every point in $C \cup D$, there exists a solution and every maximal solution to $\HS_{N}$ is complete and bounded. 
\end{lemma}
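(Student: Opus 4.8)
The plan is to apply the standard existence-and-completeness result for hybrid systems satisfying the hybrid basic conditions (e.g.\ \cite[Prop.~6.10]{teel2012hybrid}), which we may invoke because Lemma~\ref{lem:BasicConds} has already established those conditions for $\HS_N$. That result guarantees a nontrivial solution from every initial point provided a viability (tangent cone) condition holds on $C \setminus D$, and it yields a trichotomy for each maximal solution $\phi$: either (a) $\phi$ is complete, (b) $\phi$ escapes every compact subset of $C \cup D$ in finite hybrid time, or (c) $\phi$ terminates by reaching a point from which it can neither flow nor jump (its terminal value leaves $C \cup D$). My strategy is therefore to verify the viability condition and then exclude cases (b) and (c), leaving only completeness; boundedness will fall out of the same compactness argument used against (b).

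First I would verify the viability condition $f(\xi) \in T_C(\xi)$ for every $\xi \in C \setminus D$, where $T_C(\xi)$ is the tangent cone to $C = P_N = [0,\tb]^N$ at $\xi$. A point $\xi \in C\setminus D$ has $\xi_i < \tb$ for all $i \in I$, so the only active box constraints are the lower bounds $\xi_i = 0$; consequently $T_C(\xi) = \{v \in \reals^N : v_i \ge 0 \text{ whenever } \xi_i = 0\}$. Since $f(\xi) = \omega\one$ with $\omega > 0$, every component of $f(\xi)$ equals $\omega > 0 \ge 0$, so $f(\xi) \in T_C(\xi)$. Thus the viability condition holds throughout $C\setminus D$ and a nontrivial solution exists from every point of $C \cup D$ (from points of $D$ a jump is available because $G$ is nonempty there by Lemma~\ref{lem:BasicConds}).

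It then remains to exclude (b) and (c). For (b), note that $C \cup D = P_N = [0,\tb]^N$ is compact and forward invariant: during flow $\dot\tau = \omega\one$ each component merely increases until some $\tau_i$ reaches $\tb$, at which instant $\tau \in D$; and at a jump each component maps to either $0$ or $(1+\e)\tau_i$, both of which lie in $[0,\tb]$ because $1 + \e \in (0,1)$ and $\tau_i \le \tb$. Hence $G(\tau) \subset P_N$, the state never leaves $P_N$, and no solution can escape a compact subset of $C \cup D$; this simultaneously rules out (b) and shows that every solution is bounded. For (c), the same inclusion $G(D) \subset P_N \subset C \cup D$ shows a jump never carries the state out of $C \cup D$, while flow can be obstructed only at points where some $\tau_i = \tb$, i.e.\ at points of $D$, where a jump is always available since $G$ is nonempty on $D$; thus the solution can always be continued and never terminates prematurely. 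With (b) and (c) excluded, the trichotomy forces every maximal solution to be complete, and by the preceding compactness argument it is bounded.

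The main obstacle I anticipate is the precise computation of the tangent cone $T_C(\xi)$ together with the careful identification of $C \setminus D$ — in particular, confirming that no upper-bound constraint $\xi_i = \tb$ is active on $C \setminus D$, so that the flow $\omega\one$ is genuinely tangent to the box and viability is not violated. The remaining steps are routine once the forward invariance of $P_N$ under both $f$ and $G$ has been recorded.
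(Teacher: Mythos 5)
Your proof is correct and takes essentially the same route as the paper's: both invoke the basic existence/trichotomy result of \cite{teel2012hybrid} (the paper cites Proposition 2.10, you cite the analogous statement), verify viability through the positivity of the components of $f(\tau) = \omega\one$, and exclude finite escape and premature termination via $G(D) \subset C$ together with the compactness of $P_N$. The paper's proof is simply terser, leaving the tangent-cone computation and the case analysis implicit.
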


\IfConf{
\begin{proof}
For a proof of Lemma~\ref{lem:solutions2HS} see \cite{Phillips2013TechReport}.
\end{proof}
}{
\begin{proof}
The result follows from Proposition 2.10 in \cite{teel2012hybrid} using the following properties. For each point such that $\tau \in C$, the components of the flow map $f$ are positive and induce solutions that flow towards $D$. For each $\tau \in D$, the jump map satisfies $G(\tau) \subset C$. Since it is impossible for solutions with initial conditions $\tau(0,0) \in C\cup D$ to escape $C \cup D$, all maximal solutions are complete and bounded.  \EndPF
\end{proof}
}
\startmodif
Due to the jump map $G$, if the elements of the solution are initially equal (denote this set as $\S := \{\tau \in P_N : \exists i,r \in I, i \neq r, \tau_{i} = \tau_r \}$) it is 
possible for them to 
remain equal for all time.
Furthermore, it is also possible for solutions to be 
initialized on the jump set such that one element is at the threshold and another 
is equal to zero
then after the jump they will be equal, e.g. let 
$\tau_{1} = \tb$, $\tau_{2} = 0$ then $\tau_{1}^{+} = \tau_{2}^{+} = 0$. We denote 
this set as 
$\G := \{\tau \in D\setminus \S : \exists i,r \in I, i \neq r, \tau_{i} = 0, \tau_r = \tb \}$. The next result considers solutions initialized on the set $\X := \S \cup \G$. 
\stopmodif
\begin{lemma}\label{lem:defX}
For each $\tau(0,0) \in \X$, there exists a solution 
$\tau$ to $\HS_{N}$ from $\tau(0,0)$ such that, for some $M \in \{0,1\}$, $\tau(t,j) \in \S$ for all $t+j \geq M$, $(t,j) \in \dom \tau$.
\end{lemma}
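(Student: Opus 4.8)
The plan is to prove the claim by treating the two pieces of $\X = \S \cup \G$ separately, basing both cases on a single invariance observation: if two timers agree at some hybrid time, then the solution admits a continuation along which they keep agreeing. First I would record that the flow map $f(\tau)=\omega\one$ assigns the common rate $\omega$ to every component, so along any interval of flow the difference $\tau_i-\tau_r$ of any two timers is constant; hence equality of a pair is preserved by flow. For the jump map I would check that a pair $(i,r)$ with $\tau_i=\tau_r$ is always treated identically by $G$: if $\tau_i=\tau_r<\tb$, a jump can only be triggered by some other timer reaching $\tb$, and the third branch of \eqref{eqn:gi} scales both by $(1+\e)$; if instead $\tau_i=\tau_r=\tb$, the second (set-valued) branch applies to both components, and I would select the common value $0$ (equivalently $(1+\e)\tb$) for both. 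In either case the selected jump yields $\tau_i^+=\tau_r^+$.

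For the case $\tau(0,0)\in\S$, I would fix a pair $(i,r)$ with $\tau_i(0,0)=\tau_r(0,0)$ and construct the solution inductively: flow until a timer reaches $\tb$, then jump using the equality-preserving selection above, and repeat. By the invariance observation this particular solution satisfies $\tau_i(t,j)=\tau_r(t,j)$ for every $(t,j)\in\dom\tau$, so $\tau(t,j)\in\S$ throughout and $M=0$ works. Completeness of the constructed solution is inherited from Lemma~\ref{lem:solutions2HS}.

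For the case $\tau(0,0)\in\G$, I would first note that a jump is forced: since $\tau(0,0)\in D$ has a component equal to $\tb$ and $\dot\tau=\omega\one$ with $\omega>0$ would immediately push that component out of $C=P_N$, no nontrivial flow is possible. Because $\tau(0,0)\in D\setminus\S$, the timer at the threshold is unique (a second timer at $\tb$ would place the point in $\S$), so the first branch of \eqref{eqn:gi} resets it to $0$, while the zero-valued timer guaranteed by the definition of $\G$ is scaled to $(1+\e)\cdot 0 = 0$ by the third branch. Thus $\tau(0,1)$ has two components equal to $0$, i.e.\ $\tau(0,1)\in\S$, and applying the first case from $(0,1)$ yields a continuation staying in $\S$ for all $t+j\geq 1$, giving $M=1$.

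The main technical point to get right is the jump at a simultaneous threshold crossing, where $G$ is genuinely set-valued: here the statement only asserts the \emph{existence} of a suitable solution, so I am free to select the branch that preserves equality, and the construction must make this selection consistently at every such jump. The remaining steps---constancy of $\tau_i-\tau_r$ under flow, uniqueness of the threshold timer off $\S$, and completeness---are routine given the model and Lemma~\ref{lem:solutions2HS}.
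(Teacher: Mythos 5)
Your proposal is correct and follows essentially the same route as the paper's proof: decompose $\X=\S\cup\G$, use the common flow rate to preserve equality during flows, select the equality-preserving branch of the set-valued jump map $G$ at jumps (so $G(\tau)\cap\S\neq\emptyset$), and observe that from $\G$ the first jump resets the threshold timer to $0$, matching the zero timer, so the solution enters $\S$ after one jump. Your write-up is somewhat more explicit than the paper's (forced jump from $\G$, uniqueness of the threshold timer off $\S$, consistent branch selection), but the underlying argument is identical.
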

\begin{proof}
Consider a solution $\tau$ to the hybrid system $\HS_N$ with initial condition $\tau(0,0) \in \startmodif \S \stopmodif$. Due to the flow map for each state being equal, $\tau$ remains in $\S$ during flows. 
Furthermore, at points $\tau \in \S \cap D$, the jump map $G$ is set valued by the definition of $g_i$ in \eqref{eqn:gi}. From these points,  $G(\tau) \cap \S \neq \emptyset$. In fact, for each $\tau(0,0) \in \S$, there exists at least one solution such that $\tau(t,j) \in \S$ for all $t + j \geq 0$, with $(t,j) \in \dom \tau$.
Consider the case of solutions initialized at
$\tau(0,0) \in \startmodif \G \stopmodif$ (Note that $\tau(0,0) \in D$). It follows that for some $r \in I$, $\tau_r(0,0) = \tb$ and $g_r(\tau(0,0)) = 0$. Therefore, after the initial jump, we have that $G(\tau(0,0)) \cap \S \neq \emptyset,$ by which using previous arguments implies that $\tau(t,j) \in \S$ for all $t + j \geq 1$. 
\end{proof}

Furthermore, there is a distinct ordering to the jumps. If $\tau$ is such that $\tau_i \neq \tau_r$ for all $i\neq r$ then the ordering of each $\tau_i$ is preserved after $N$ jumps. More specifically, we have the following result.
\begin{lemma} \label{lem:ordering}
For every solution $\tau$ to $\HS_N$ with $\tau(0,0) \notin \X$, if at $(t_{j},j) \in \dom \tau$ we have
\IfConf{$
0 \leq \tau_{i_1}(t_j,j) < \tau_{i_2}(t_j,j) < ... < \tau_{i_N}(t_j,j) \leq \tb
$}{$$
0 \leq \tau_{i_1}(t_j,j) < \tau_{i_2}(t_j,j) < ... < \tau_{i_N}(t_j,j) \leq \tb
$$}
for some sequence of nonrepeated elements $\{i_m\}^N_{m = 1}$ 
of $I$
(that is, a reordering of the elements of the set $I = \{1,2,\ldots,N\}$) then, after $N$ jumps, it follows that
\IfConf{$
0 \leq \tau_{i_1}(t_{j+N},j+N) < \tau_{i_2}(t_{j+N},j+N) < ... < \tau_{i_N}(t_{j+N},j+N) \leq \tb.
$}{$$
0 \leq \tau_{i_1}(t_{j+N},j+N) < \tau_{i_2}(t_{j+N},j+N) < ... < \tau_{i_N}(t_{j+N},j+N) \leq \tb.
$$}
\end{lemma}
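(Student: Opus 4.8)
The plan is to track how the relative ordering of the components $\tau_{i_1},\ldots,\tau_{i_N}$ evolves across flows and jumps, and to show that a single jump of $\HS_N$ acts on the index sequence $(i_1,i_2,\ldots,i_N)$ (listed from smallest to largest component) as the cyclic shift $(i_1,i_2,\ldots,i_N)\mapsto(i_N,i_1,\ldots,i_{N-1})$. Since this permutation has order $N$, applying it $N$ times is the identity, which is exactly the claimed restoration of the ordering at $(t_{j+N},j+N)$. I would run an induction over the jump index, carrying the invariant that at the start of each flow interval the components are pairwise distinct, all strictly below $\tb$, and exactly the minimal component equals zero.

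First I would dispatch the flow phase. Because $f(\tau)=\omega\one$ with $\omega>0$, every component increases at the common rate $\omega$, so all pairwise differences $\tau_i-\tau_r$ are constant along flows. Hence the strict ordering $\tau_{i_1}<\cdots<\tau_{i_N}$ is preserved throughout any flow interval, and the maximal component $\tau_{i_N}$ is the first, and by distinctness the only, one to reach the threshold $\tb$; the resulting jump is governed by the single-valued branch of $g_{i_N}$ in \eqref{eqn:gi}.

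Next I would analyze the jump. At the jump point $\tau_{i_N}=\tb$ is reset to $0$ while each remaining component is multiplied by $(1+\e)$. Since $(1+\e)\in(0,1)$ is positive and $s\mapsto(1+\e)s$ is strictly increasing, the images satisfy $(1+\e)\tau_{i_1}<\cdots<(1+\e)\tau_{i_{N-1}}$, remain strictly below $\tb$, and stay strictly positive because the flow duration preceding the jump is positive, so $\tau_{i_1}>0$ at the jump. Thus after the jump the new smallest component is $\tau_{i_N}=0$, the ordered index sequence becomes $(i_N,i_1,\ldots,i_{N-1})$, and the invariant is re-established with all components distinct, all $<\tb$, and the minimum equal to zero. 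This simultaneously shows the trajectory never enters $\S$ (no two components coincide) nor $\G$ (no component is ever both $0$ and $\tb$ at a jump), consistent with $\tau(0,0)\notin\X$. Iterating, the $k$-th jump resets $i_{N-k+1}$ and shifts the sequence cyclically by $k$, so after $N$ jumps every index has been reset exactly once and the sequence returns to $(i_1,i_2,\ldots,i_N)$, yielding the asserted inequalities.

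The main obstacle I anticipate is not the permutation bookkeeping but the boundary analysis sustaining the invariant, namely ruling out simultaneous threshold crossings and collisions in which a reset component meets a component that is itself zero. The two facts that make \emph{exactly the current maximum jumps, alone, to zero} valid at each of the $N$ steps are: each inter-jump flow interval has strictly positive duration, since immediately after any jump no component equals $\tb$ (the reset one is $0$ and the others are at most $(1+\e)\tb<\tb$); and $\tau(0,0)\notin\X$ together with the positivity and injectivity of the rescaling keeps the solution clear of $\S\cup\G$ for all subsequent hybrid time.
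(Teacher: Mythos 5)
Your proof is correct and takes essentially the same route as the paper's: flows preserve the strict ordering because every timer shares the rate $\omega\one$, each jump resets only the maximal timer to zero while the factor $(1+\e)$ preserves the order of the rest, so the ordered index sequence undergoes a cyclic shift whose $N$-th iterate is the identity. The only difference is that you make explicit the boundary bookkeeping (strictly positive inter-jump flow durations and the solution remaining outside $\S \cup \G$), which the paper's proof leaves implicit.
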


\begin{proof}
Let $\tau$ be a solution to $\HS_{N}$ from $P_{N}\setminus \X$. There exists a sequence $i_{k}$ of distinct elements with $i_{k} \in I$ for each $k \in I$, such that $0 \leq \tau_{i_{1}}(t,j) < \tau_{i_{2}}(t,j) < \ldots < \tau_{i_{N}}(t,j) \leq \tb$ over $[t_{0},t_{1}]\times \{0\}$. After the jump at $(t,j) = (t_{1},0)$ we have $0 = \tau_{i_{N}}(t,j+1) < \tau_{i_{1}}(t,j+1) < \tau_{i_{2}}(t,j+1) < \ldots <  \tau_{i_{N-1}}(t,j+1) < \tb$. Continuing this way for each jump, it follows that after $N-1$ more jumps, the solution is such that $0 \leq \tau_{i_{1}}(t_{N},j+N) < \tau_{i_{2}}(t_{N},j+N) < \ldots < \tau_{i_{N}}(t_{N},j+N) \leq \tb$ and the order at time $(t,j)$ is preserved.
\end{proof}
Using these properties of solutions to $\HS_N$, the next section defines the set to which these solutions converge and establishes its stability properties.

\section{Dynamical Properties of $\HS_{N}$}
\label{sec:analysis}

Our goal is to show that the desynchronization configuration \sean{of $\HS_N$, which is defined in Section~\ref{sec:AN}}, is asymptotically stable.
We recall from \cite{teel2012hybrid,Goebel.ea.09.CSM} the following definition of asymptotic stability for general hybrid systems with state $x \in \reals^n$.

\begin{definition}[stability]
\label{def:AS}
A closed set $\A \subset \reals^n$ is said to be
\begin{itemize}
\item {\em stable} if for each $\varepsilon>0$ there exists
$\delta>0$ such that each solution $x$ with $|x(0,0)|_{\A}\leq \delta$
satisfies
$|x(t,j)|_{\A} \leq \varepsilon$ for all $(t,j)\in\dom x$; 
\item {\em attractive} if there exists $\mu > 0$ such that every maximal solution
$x$
with $|x(0,0)|_{\A}\leq \mu$ is 
complete and
satisfies \\ $\lim_{(t,j) \in \dom x, t+j\to\infty} |x(t,j)|_{\A}=0$;
\item {\em asymptotically stable} if stable and attractive;
\item {\em weakly globally asymptotically stable} if $\A$ is stable and if, for every initial condition, there exists a maximal solution that is complete and satisfies $\lim_{(t,j) \in \dom x, t+j\to\infty} |x(t,j)|_{\A}=0$.
\end{itemize}
\end{definition}

The set of points from where the attractivity property holds is
the basin of attraction and excludes all points where the system trajectories may never converge to $\A$. In fact, 
it will be established in Section~\ref{sec:lyapunov} that the basin of attraction for asymptotic stability of desynchronization of $\HS_N$ does not include any point $\tau$ such that any two or more timers are equal or become equal after a jump, which is the set $\X$ defined in Lemma~\ref{lem:defX}. \IfConf{For this purpose, a Lyapunov-like function will be constructed in Section~\ref{sec:lyapunov} to show that a compact set denoted $\A$, defining the desynchronization condition, is asymptotically stable and weakly globally asymptotically stable.}{}
\NotForConf{For example, consider the case $N = 2$, i.e.,  $\HS_2$. Then, the set $\X_2$ is defined as 
\begin{align}
\begin{split}
\X_2 &= \S_2 \cup \G_2 =\left(\{ \tau \in P_2: \ton = \ttw \}\right) \cup\left(\{\tau \in D : g_1(\tau) = \tau_2\} \cup \{ \tau \in D : g_2(\tau) = \tau_1 \} \right).
\end{split} \label{eqn:X2} 
\end{align}
Note that the set $\S_2$ defines the line $\ton = \ttw$ in $P_2$ and $\G_2$ is given by the points $\{(0,\tb), (\tb, 0)\}$ in $P_2$; see Figure~\ref{fig:A2}.
For $N=3$, the set $\X_3$ is defined as 
\begin{align}
\X_3 = \S_3 \cup \G_3 \label{eqn:X3}
\end{align}
where
\begin{align}
\S_3 = &\{\tau \in P_3 \ : \tau_1 = \tau_2 \}\cup \{\tau \in P_3 \ : \tau_1 = \tau_3 \} \cup \{\tau \in P_3 \ : \tau_2 = \tau_3\}
\end{align}
and
\begin{align}
\begin{split}
\G_3 = &\{\tau \in P_3 \ : g_1(\tau) = \tau_3\} \cup \{\tau \in P_3 \ : g_2(\tau) = \tau_1\} \cup\{\tau \in P_3 \ : g_3(\tau) = \tau_1\}\cup \{\tau \in P_3 \ : g_2(\tau) = \tau_3\} \\ & \qquad \cup \{\tau \in P_3 \ : g_3(\tau) = \tau_2\}  \cup \{\tau \in P_3 \ : g_1(\tau) = \tau_2\}. 
\end{split}
\end{align}
Then, $\X_3$ is defined by the union of $\S_3$, which is the as the union of the planes in $P_3$ given by $\tau_1 = \tau_2$, $\tau_1 = \tau_3$, and $\tau_2 = \tau_3$, and $\G_3$, which is given by $\{(\tb,\tau_2,0), (0,\tau_2,\tb),(\tau_1,\tb,0), \\ (\tau_1,0,\tb), (\tb,0,\tau_3), (0,\tb,\tau_3): \tau \in P_3\}$; see Figure~\ref{fig:X3BadBox}.}
\NotForConf{For this purpose, a Lyapunov-like function be constructed in Section~\ref{sec:lyapunov} to show that a compact set denoted $\A$, defining the desynchronization condition, is asymptotically stable and weakly globally asymptotically stable.}

\NotForConf{\begin{figure}
\centering
  \subfigure[The sets $\X_2$ \sean{(dashed green)} and $\A$ \sean{(solid blue and red)}. The set $\A$ is defined by the union of $\ell_{1}$ and $\ell_{2}$ (See Section~\ref{sec:AN}).]{
    \psfragfig*[width=.4\textwidth]{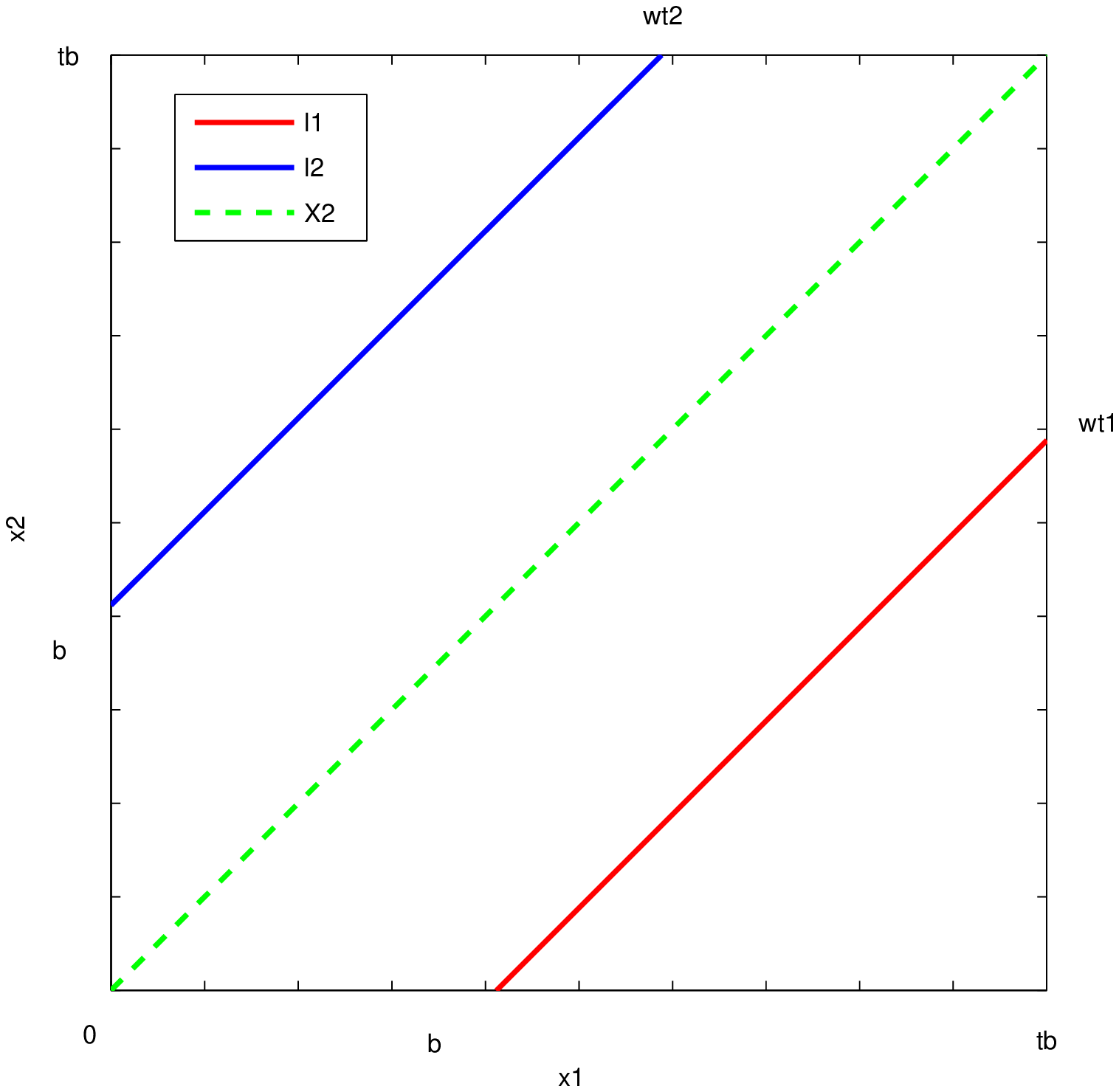}{
         \psfrag{0}[][][1]{$0$}
    \psfrag{x1}[][][1]{$\ton$}
   \psfrag{x2}[][][1][-90]{$\ttw$}
   \psfrag{x0}[][][.8]{\hspace{5mm}$\tau(0,0)$}
   \psfrag{tb}[][][1]{$\tb$}
   \psfrag{b}[][][1]{$B$}
   \psfrag{l1}[][][.6]{\hspace{1mm}$\ell_{2}$}
   \psfrag{l2}[][][.6]{\hspace{.1cm}$\ell_{1}$}
   \psfrag{X2}[][][.6]{\hspace{.1cm}$\X_2$}
   \psfrag{wt1}[][][.9]{$\wt\tau_2$}
   \psfrag{wt2}[][][.9]{$\wt\tau_1$}   
    }
  \label{fig:A2}}\ \
  \subfigure[A simulation (dashed blue) of $\HS_{2}$ showing the attractivity of the set $\A$ (solid black).]{
    \psfragfig*[width=0.4\textwidth]{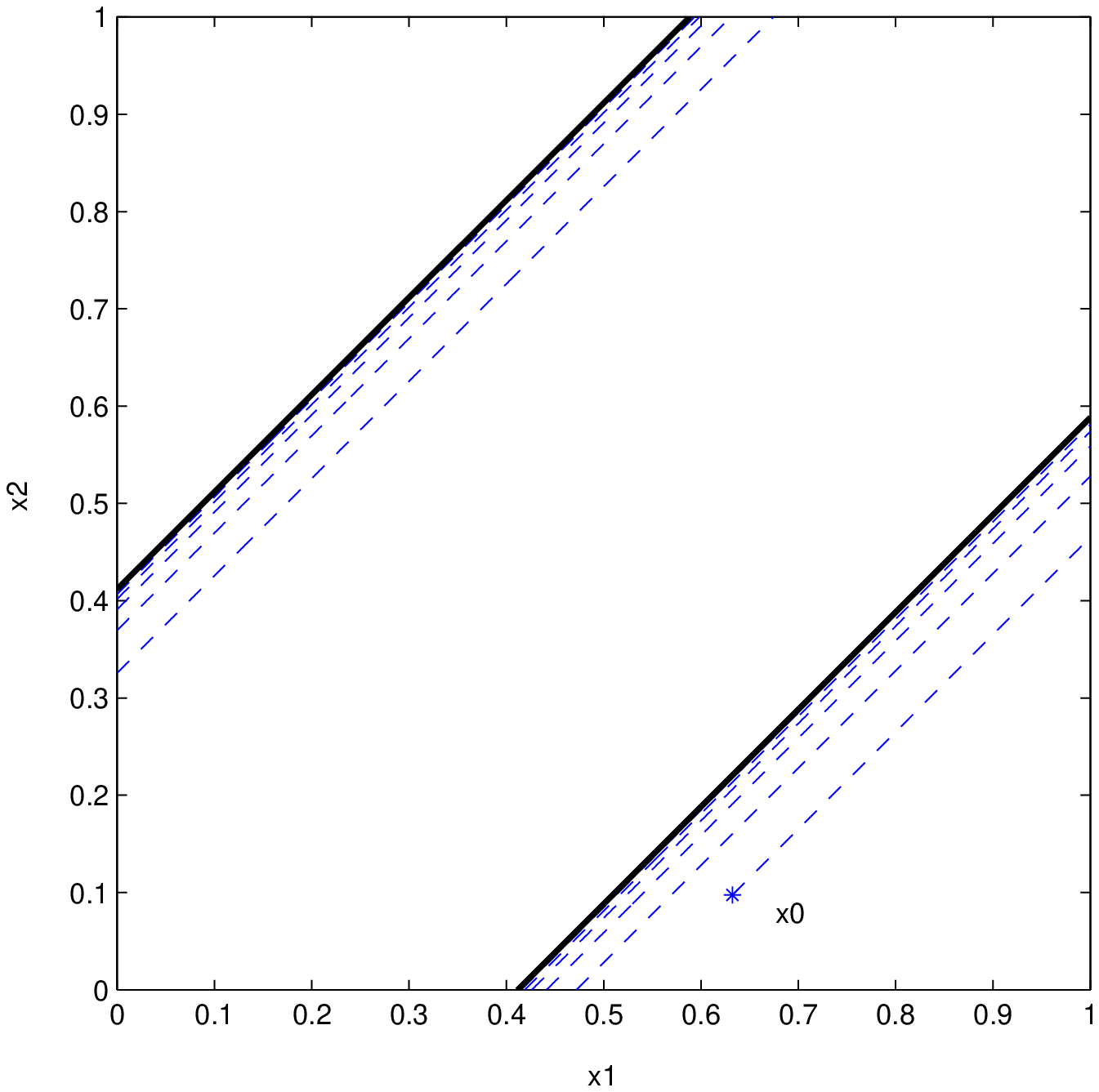}{
    \psfrag{x1}[][][1]{$\ton$}
   \psfrag{x2}[][][1][-90]{$\ttw$}
   \psfrag{x0}[][][.8]{\hspace{5mm}$\tau(0,0)$}
   \psfrag{tb}[][][1]{$\tb$}
   \psfrag{b}[][][1]{$B$}
   \psfrag{l1}[][][.6]{\hspace{1mm}$\ell_{2}$}
   \psfrag{l2}[][][.6]{\hspace{.1cm}$\ell_{1}$}
   \psfrag{X2}[][][.6]{\hspace{.1cm}$\X_2$}
   \psfrag{wt1}[][][.9]{$\wt\tau_2$}
   \psfrag{wt2}[][][.9]{$\wt\tau_1$}   
    }
  \label{fig:A2SetandTraj}}  
  \caption{Sets associated with $\HS_{2}$ and a solution to it from $\tau(0,0) = [0.7, 0.75]^\top$ with $\e = 0.3$ and $\tb = 1$.}
  \vspace{-.3cm}
\end{figure}

\begin{figure}
\centering
\psfragfig*[width=.7\textwidth]{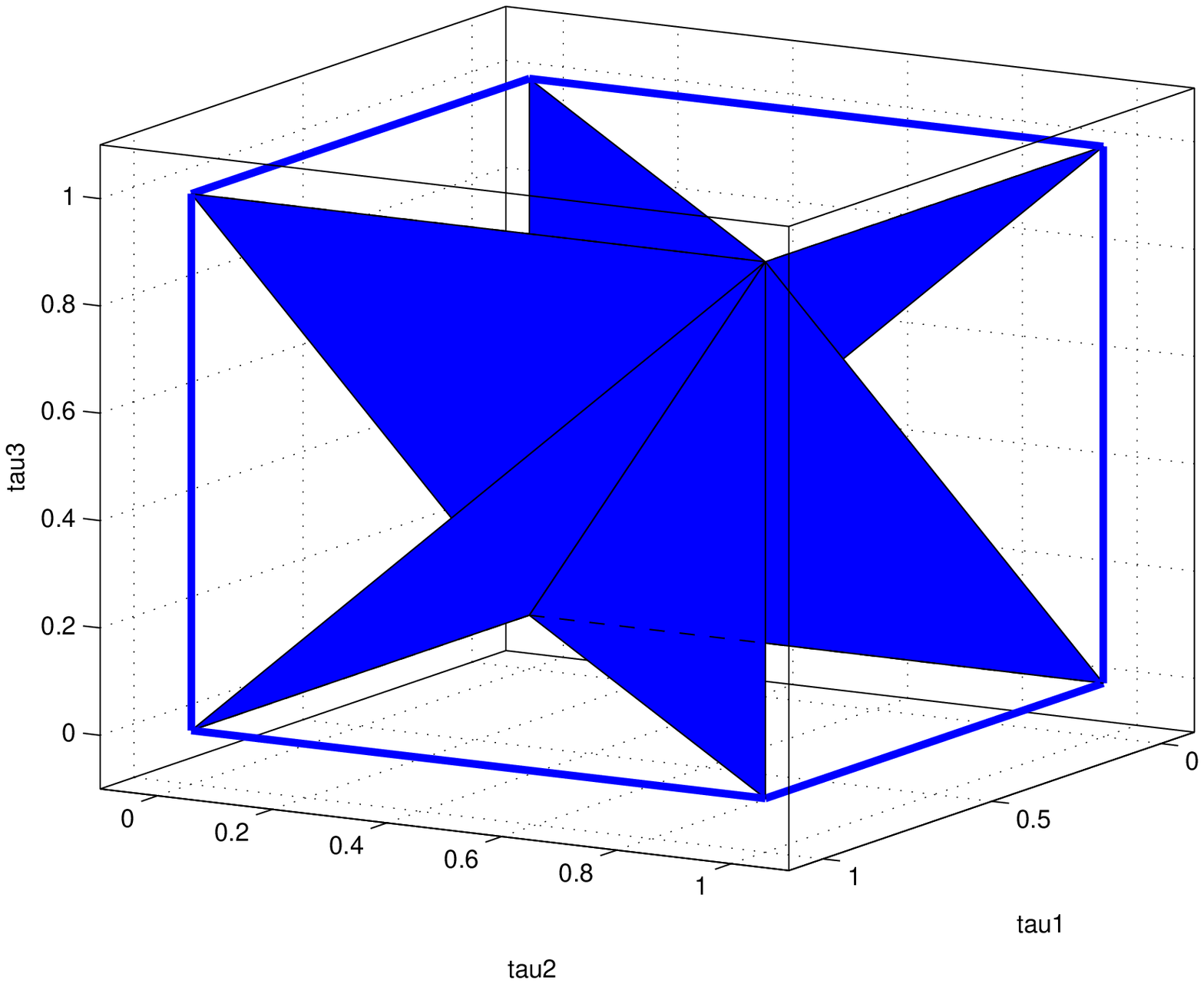}{
\psfrag{tau1}[][][1]{$\tau_1$}
\psfrag{tau2}[][][1]{$\tau_2$}
\psfrag{tau3}[][][1][-90]{\hspace{-.5cm}$\tau_3$}
}
\caption{Points in each blue plane and line belong to the set $\X_3$.}
\label{fig:X3BadBox}
\end{figure}}
\subsection{Construction of the set $\A$ for $\HS_N$}\label{sec:AN}
In this section, we identify the set of points corresponding to the impulse-coupled oscillators being 
desynchronized, namely, we define the {\em desynchronization set}. We define desynchronization as the behavior in which the separation between all of 
the timers' impulses is equal (and nonzero), see Figure~\ref{fig:example}. More specifically desynchronization is defined as follows:
\begin{definition} \label{def:desynch}
A solution $\tau$ to $\HS_N$ is desynchronized if there exists $\Delta > 0$ and a sequence of non-repeated elements $\{i_m\}^N_{m = 1}$ of  $I$ (that is, a reordering of the elements of the set $I = \{1,2, \ldots, N \}$) such that 
$\lim_{j \to \infty} (t_j^{i_m} - t_j^{i_{m+1}} )= \Delta$ for all 
$m \in \{1,2, \ldots, N-1\}$ and
$\lim_{j \to \infty} (t_j^{N} - t_j^{i_{1}}) = \Delta,$
where $\{t_j^{i_m}\}_{j = 0}^\infty$ is the sequence of jump times of the state $\tau_{i_m}$. 
\end{definition}
 
In fact, this separation between impulses leads to an ordered 
sequence of impulse times with equal separation.  
The desynchronization set $\A$ for the hybrid system $\HS_N$ captures such a behavior and is parameterized by $\varepsilon$, the threshold $\tb$, and the number of impulse-coupled oscillators $N$. 

To define this set, first we
provide some basic intuition about the dynamics of $\HS_N$ when desynchronized. The set $\A$ must be forward invariant and such that trajectories staying in it satisfy the property in Definition~\ref{def:desynch}. Due to the definition of the flow map $f$, there exist sets in the form of ``lines" $\ell_k$, each of them in the direction $\one$, which is the direction of the flow map, intersecting the jump set at a point which, for the $k$-th line, we denote as $\wt\tau^{k}$. We define the desynchronization set as the union of sets $\ell_{k}$ collecting points $\tau = \wt\tau^k + \one s \in P_N$ parameterized by $s \in \reals$. 
\NotForConf{Figure~\ref{fig:A2} shows $\ell_1$ and $\ell_2$ (solid blue and red) for the case $N = 2$.}

To identify  $\wt\tau^k$, 
consider a point $\wt\tau^{k} \in D\setminus \X$ with components satisfying $\wt\tau_{1}^{k} = \tb > \wt\tau_{2}^{k} > \wt\tau_{3}^{k} > ... > \wt\tau_{N}^{k}$. \startmodif 
Due to Definition~\ref{def:desynch}, it must be true that the difference between jump times are constant. This means that there must be some correlation between $\Delta$ and the difference between, in this case, $\tau_{1}^{k}$ and $\tau_{2}^{k}$. Moreover, there must be a correlation between $\tau_{1}^{k}$ and all other states at jumps. 
\stopmodif
It follows that this point belongs to $\A$ only if the distance between 
the expiring timer ($\wt\tau_{1}^{k}$) 
and each of its other components ($\wt\tau_{i}^{k}$, $i \in I \setminus \{1\}$) 
is equal to the distance between 
the value after the jump 
of the timer expiring next ($\wt\tau_{2}^{k}\null^{+}$)
and the value after the jump of 
its other components ($\wt\tau_{i}^{k}\null^{+}$, $i \in I \setminus \{2\}$),
respectively. 
\startmodif
This property ensures that, when in the desynchronization set, the relative distance between the leading timer and each of the other timers is equal, before and after jumps.
\stopmodif
More precisely,
\begin{align}
\wt\tau_{1}^{k} - \widetilde{\tau}_{i}^{k} = \wt\tau_{2}^{k}\null^{+} - \wt\tau_{\mbox{\scriptsize next}(i)}^{k}\hspace{-0.21in}\null^{+} \qquad
\qquad \forall \ i \in I \setminus \{1\} \label{eqn:ANcondition},
\end{align}
where $\wt\tau^{k}\null^{+} = G(\wt\tau^{k})$ and next$(i) = i + 1$ if $i + 1 \leq N$ and $1$ otherwise.\footnote{Note that $G$ is single valued at each $\wt\tau^k \notin \X$.}
Since $\X$ contains all points such that at least two or more timers are the same, we 
can consider the case when one component of $\wt\tau^k$ is equal to $\tb$ at a time. For 
each such case, we have $(N - 1)!$ possible permutations
 of the other components and $N$ possible timer components equal to $\tb$, leading to $N!$ 
 total possible sets $\ell_{k}$. 

\NotForConf{To illustrate computation of $\wt\tau^k$ in \eqref{eqn:ANcondition} and the construction of $\A$, consider the 
case of $N =2$ and $\wt\tau_1^1 = \tb > \wt\tau_2^1$. For $i = 2$, \eqref{eqn:ANcondition} 
becomes 
$$
\tb -\wt \tau_2^1 = \wt\tau_2^1(\e + 1)
$$
which leads to $\wt\tau_2^1 = \frac{\tb}{\e +2}$. It follows that $\wt\tau^1 = [\tb, \frac{\tb}{\e +2}]^\top$. Similarly for $\wt\tau_2^1 = \tb > \wt\tau_1^1$, we get from \eqref{eqn:ANcondition} the equation $\tb - \wt\tau_1^1 = \wt\tau_2^1(\e+1)$, which implies  $\wt\tau^2 = [\frac{\tb}{\e +2}, \tb]^\top$.
A glimpse at the case for $N=3$ with $\wt\tau_1^1 = \tb > \wt\tau_2^1 > \wt\tau_3^1$ indicates that \eqref{eqn:ANcondition} leads to
\begin{align*}
\tb - \wt\tau_2^1 = \wt\tau_2^1(1+\e) - \wt\tau_3^1(1+\e), \qquad
\tb - \wt\tau_3^1 = \wt\tau_2^1(1+\e) - 0. 
\end{align*}
The solution to these equations is $\wt\tau^1 = [\tb, \tb(\e+2)/(\e^2+3\e+3),\tb/(\e^2+3\e+3)]^\top$. }

For the $N$ case, the algorithm above results in the system of equations $\Gamma \tau_s = b$, where
\IfConf{\begin{equation}
\Gamma = \left[ \begin{array}{ccccccc} 
1 & 0 & 0 & 0 & \ldots & 0 \\ 
0 & (2+\varepsilon) &-(1+\varepsilon) & 0 & \ldots & 0  \\
0 & (1+\varepsilon) & 1 & -(1+\e) & \ddots &  \vdots\\
0 & (1+\varepsilon) & 0  & 1 & \ddots & 0 \\
\vdots & \vdots & \vdots  & 0 & \ddots &-(1+\varepsilon) \\
0& (1+\varepsilon) & 0   & 0 & \ldots & 1 \\
\end{array} \right]
\label{eqn:Amatrix} 
\end{equation}}{\begin{align}
\Gamma &= \left[ \begin{array}{ccccccc} 
1 & 0 & 0 & 0 & \ldots & 0 \\ 
0 & (2+\varepsilon) &-(1+\varepsilon) & 0 & \ldots & 0  \\
0 & (1+\varepsilon) & 1 & -(1+\e) & \ddots &  \vdots\\
0 & (1+\varepsilon) & 0  & 1 & \ddots & 0 \\
\vdots & \vdots & \vdots  & 0 & \ddots &-(1+\varepsilon) \\
0& (1+\varepsilon) & 0   & 0 & \ldots & 1 \\
\end{array} \right] 
\label{eqn:Amatrix} 
\end{align}}
and
$
b = \bar{\tau} {\bf 1},
$
where $\tau_s$ is the state $\wt\tau^{k}$ sorted into decreasing order. 
\NotForConf{For example, if $\wt\tau^k$ is such that $\wt\tau^k_2 = \tb > \wt\tau_1^k > \wt\tau_3^k$, then $\tau_s$ is given as $[\wt\tau_2^k,\wt\tau_1^k,\wt\tau_3^k]^\top$.} 
It can be shown that for any $\varepsilon \in (-1,0)$, a solution $\tau_{s}$ exists (see Lemma~\ref{eqn:tauSsolution}).
Then, $\tau_s$ needs to be unsorted and becomes $\wt\tau^k$ in the definition of the set $\ell_k$.

The solution to $\Gamma\tau_s = b$ is the result of a single case of $\tau \in D \setminus \X$. As indicated above, to get a full definition of the set $\A$, the $N!$ sets $\ell_k$ should be computed. 
For arbitrary $N$, the set $\A$ is given as a collection of sets $\ell_{k}$ given by
\begin{equation}
\A = \bigcup_{k =1}^{N!}\ell_{k} \label{eqn:AN},
\end{equation}
where, for each $k \in \{1,2,\dots,N!\}$, $\ell_k := \{\tau  :\tau =  \wt\tau^k + \one s \in P_N, s \in \reals \}.$ 
\NotForConf{For the case $N = 2$, the points $\wt\tau^k$ for $k \in \{1,2\}$ lead to the set $\A$ given by
\begin{align*}
\begin{split}
\A = \ell_{1}\cup\ell_{2} &= \left\{\tau  : \tau = \left[\begin{array}{c} \tb \\ \frac{\tb}{\varepsilon + 2} \end{array}\right] + \one s \in P_2, s\in \reals \right\} \cup \left\{\tau :\tau =  \left[\begin{array}{c}  \frac{\tb}{\varepsilon + 2} \\ \tb \end{array}\right] + \one s \in P_2, s\in \reals \right\} .
\end{split}
\end{align*}

\noindent Figure~\ref{fig:A2} shows these sets in the $(\tau_{1},\tau_{2})$-plane (solid blue and red). Figure~\ref{fig:A2SetandTraj} shows a solution to $\HS_{2}$. The initial conditions for the simulation are $\tau(0,0) = (0.75, \ 0.7)$.

Furthermore, for the case $N = 3$ the points $\wt\tau^k$ for $k \in \{1,2,...,6\}$ lead to the set $\A_3$ given by 
\begin{align*}\allowdisplaybreaks
\A_3 &= \ell_1\cup \ell_2 \cup \ell_3 \cup \ell_4 \cup \ell_5\cup \ell_6 \\
&= \left\{\tau  : \tau = \mat{\tb \\ \frac{(\e+2)\tb}{e^{2}+3\e+3} \\ \frac{\tb}{e^{2}+3\e+3}} + \one s \in P_3, s\in \reals \right\} \cup \left\{\tau : \tau =  \mat{\tb \\ \frac{\tb}{e^{2}+3\e+3} \\ \frac{(\e+2)\tb}{e^{2}+3\e+3} } + \one s \in P_3, s\in \reals \right\} \\ 
& \quad \cup \left\{\tau  : \tau = \mat{ \frac{(\e+2)\tb}{e^{2}+3\e+3} \\ \tb \\ \frac{\tb}{e^{2}+3\e+3}} + \one s \in P_3, s\in \reals \right\}
\cup \left\{\tau : \tau =  \mat{ \frac{\tb}{e^{2}+3\e+3} \\ \tb \\ \frac{(\e+2)\tb}{e^{2}+3\e+3} } + \one s \in P_3, s\in \reals \right\} \\
& \quad \cup \left\{\tau  : \tau = \mat{ \frac{(\e+2)\tb}{e^{2}+3\e+3} \\ \frac{\tb}{e^{2}+3\e+3} \\ \tb} + \one s \in P_3, s\in \reals \right\} \cup \left\{\tau : \tau =  \mat{ \frac{\tb}{e^{2}+3\e+3} \\ \frac{(\e+2)\tb}{e^{2}+3\e+3}\\ \tb} + \one s \in P_3, s\in \reals \right\} \\ 
\end{align*}
Figure~\ref{fig:A3} shows these sets in the $(\tau_1,\tau_2,\tau_3)$-plane (solid colored). Figure~\ref{fig:A3SetandTraj} shows two solutions to $\HS_3$. Note how each simulation has jumps that take the trajectory close to different lines. \seannew{This is due to a preservation of order for each $\tau_i$ as seen in Lemma~\ref{lem:ordering}.}
This preservation of order will be used in the Lyapunov stability proof in the next section.}

\NotForConf{
\begin{figure}[h!]
\centering
\subfigure[{The desynchronization set $\A$ for $N=3$, with $\tb = 1$ and $\e = -0.3$.}]{  \label{fig:A3}
    \psfragfig*[width=0.4\textwidth]{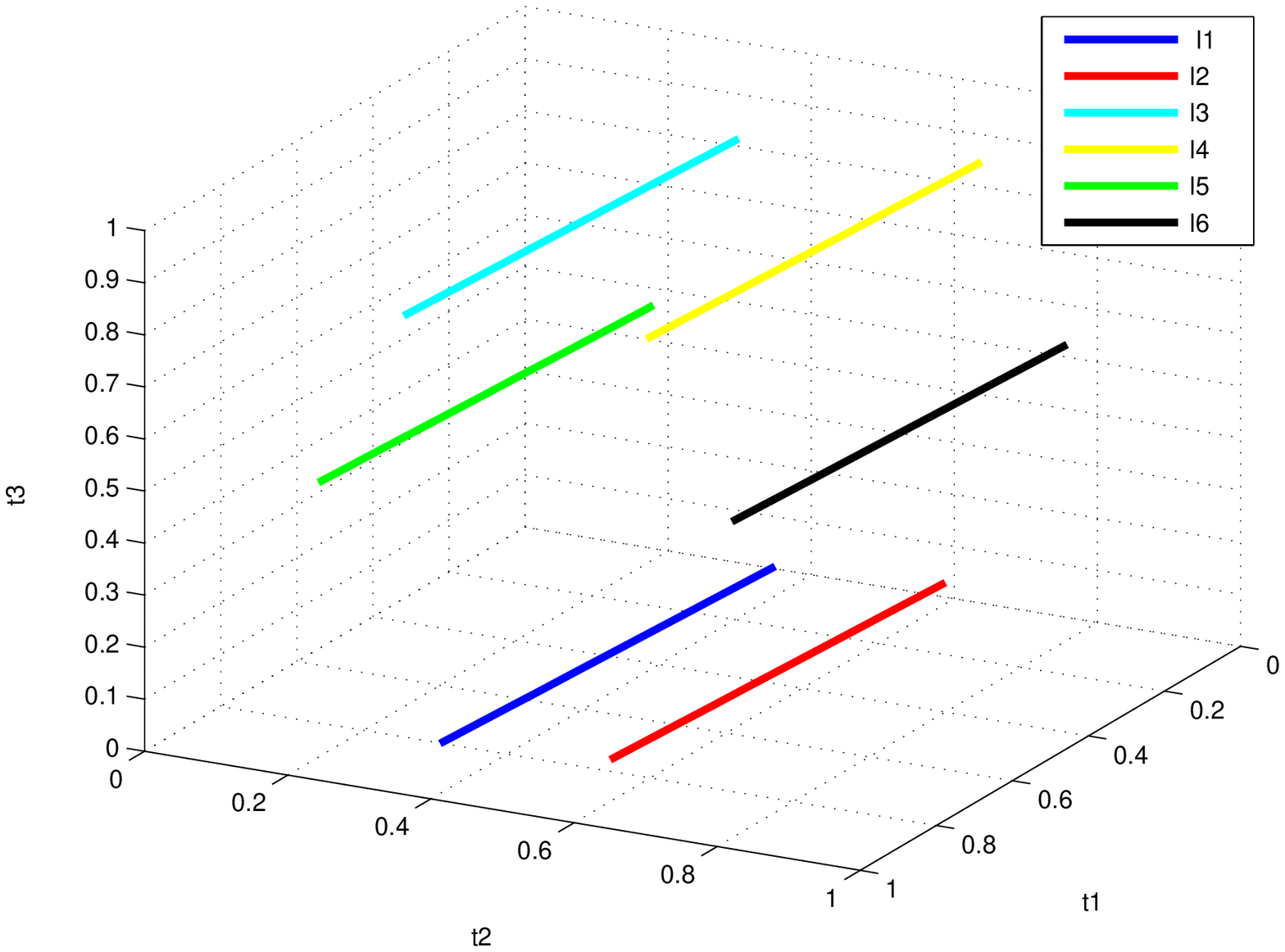}{
       \psfrag{t1}[][][1]{$\ton$}
   \psfrag{t2}[][][1]{$\ttw$}
   \psfrag{t3}[][][1]{$\tau_3$}
   \psfrag{tb}[][][.7]{$\tb$}
   \psfrag{ l1}[][][.4]{$\ell_1$}
   \psfrag{l2}[][][.4]{$\ell_2$}
   \psfrag{l3}[][][.4]{$\ell_3$}
   \psfrag{l4}[][][.4]{$\ell_4$}
   \psfrag{l5}[][][.4]{$\ell_5$}
   \psfrag{l6}[][][.4]{$\ell_6$}
    }}
    \hspace{4mm}
\subfigure[{Two solutions \sean{(magenta and cyan)} to $\HS_3$ such that $\tau(0,0) \notin \X_3$ with $\e = -0.3$ and $\tb = 1$ showing the attractivity to $\A_3 = \cup_{i=1}^{3!} \ell_i$}]{\label{fig:A3SetandTraj}
    \psfragfig*[width=0.4\textwidth,trim = 0mm 0mm 0mm 0mm, clip]{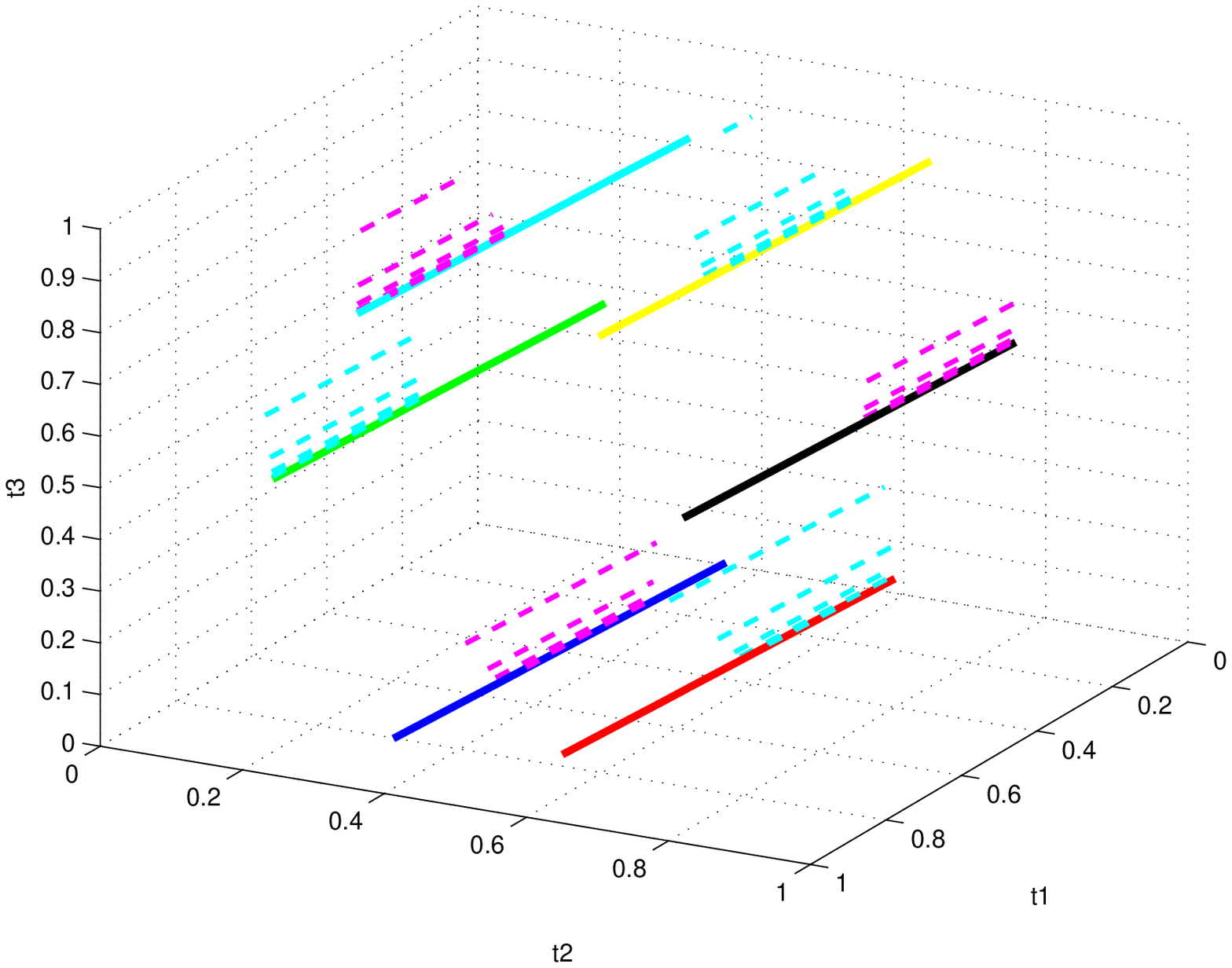}{
       \psfrag{A3}[][][.4]{$\A_3$}
   \psfrag{t1}[][][1]{$\ton$}
   \psfrag{t2}[][][1]{$\ttw$}
   \psfrag{t3}[][][1]{$\tau_3$}
    }}
    \caption{\subref{fig:A3} Set of points $(\ell_1, \ell_2, ..., \ell_6)$ defining $\A_3$ and \subref{fig:A3SetandTraj} two simulations (dashed, one in cyan and the other in magenta) converging to the set $\A_3$ (solid colored). } \label{fig:A3figures}
\end{figure}}

\subsection{Lyapunov Stability} \label{sec:lyapunov}
Lyapunov theory for hybrid systems is employed to show that the
set of points $\A$ is asymptotically stable. 
Our candidate Lyapunov-like function, which is defined below and uses the distance function, is built by observing that there exist points where the distance to $\A$ may increase during flows. This is due to the sets $\ell_{k}$ being a subset $P_{N}$.
To avoid this issue, we define 
$$\wt\A = \bigcup_{k=1}^{N!} \wt\ell_k \supset \A$$
where $\wt\ell_k$ is the extension of $\ell_k$ given by 
\begin{align}
\wt\ell_k = \left\{\tau \in \reals^{N} : \tau = \wt\tau^k + \one s, s \in \reals \right\}. \label{eqn:wtellk}
\end{align}
Then, with this extended version of $\A$, the proposed candidate Lyapunov-like function for asymptotic stability of $\A$ for $\HS_N$ is given by the locally Lipschitz function \startmodif
\begin{equation}\label{eqn:lyapunov}
V(\tau) = \min\{|\tau|_{\wt\ell_1},|\tau|_{\wt\ell_2}, \ldots ,|\tau|_{\wt\ell_k}, \ldots ,|\tau|_{\wt\ell_N!}\} \quad \forall \ \tau \in P_N \setminus \X 
\end{equation} \stopmodif
where, \startmodif for some $k$, $|\tau|_{\wt\ell_k}$ \stopmodif is the distance between the point $\tau$ and the set $\wt\ell_k$.\footnote{The set $\wt\ell_k$ can be described as a straight line in $\reals^n$ passing through a point $\wt\tau^k$ and with slope $\one$. Then, $|\tau|_{\wt\ell_k}$ can be written as the general point-to-line distance $|(\wt\tau^k - \tau) - 1/N ((\wt\tau^k - \tau)^\top\one)\one|$.
} The following theorem establishes asymptotic stability of $\A$ for $\HS_N$.
We show that the change in $V$ during flows is zero and that at jumps we have a strict decrease of $V$; namely, $V(G(\tau)) - V(\tau) = -|\e| V(\tau)$. A key step in the proof is in using \cite[Theorem 8.2]{teel2012hybrid} on a restricted version of $\HS_N$.
\begin{theorem}\label{thm:stability}
For every $N \in \nats, N > 1$, $\tb > 0,\omega > 0$, and $\varepsilon \in (-1,0)$, the hybrid system $\HS_N$ is such that the compact set $\A$ is 
\IfConf{asymptotically stable with basin of attraction given by $\mathcal{B}_{\A} := P_N \setminus \X$. Furthermore, $\A$ is weakly globally asymptotically stable. }{\begin{enumerate}
\item asymptotically stable with basin of attraction given by $\mathcal{B}_{\A} := P_N \setminus \X$.
\item  weakly globally asymptotically stable. 
\end{enumerate}}
\end{theorem}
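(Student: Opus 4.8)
The plan is to verify the hypotheses of \cite[Theorem 8.2]{teel2012hybrid} for $\HS_N$ using the candidate $V$ in \eqref{eqn:lyapunov}, after restricting attention to $P_N\setminus\X$. First I would record that $V(\tau)=|\tau|_{\wt\A}$ with $\A\subset\wt\A$, so $V(\tau)\le|\tau|_\A$, and that the construction of the points $\wt\tau^k$ gives $\wt\A\cap(P_N\setminus\X)=\A$; hence $V$ is continuous, nonnegative, and vanishes on $P_N\setminus\X$ exactly on $\A$. Together with properness of $V$ on compact sublevel sets contained in $P_N\setminus\X$, this supplies sandwich bounds $\alpha_1(|\tau|_\A)\le V(\tau)\le\alpha_2(|\tau|_\A)$ of the type required by the Lyapunov argument. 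With the bounds in hand, asymptotic stability reduces to showing that $V$ does not increase along flows and strictly decreases at jumps.

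For flows this is immediate: $f(\tau)=\omega\one$ and each $\wt\ell_k$ is a line in the direction $\one$ (see \eqref{eqn:wtellk}), so translating along $\one$ leaves every $|\tau|_{\wt\ell_k}$ unchanged; consequently $V$ is constant during flows and its change along $f$ is zero. This invariance is precisely why the extended lines $\wt\ell_k$, rather than the truncated $\ell_k$, are used.

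The jump is the heart of the argument. On $P_N\setminus\X$ the ordering is strict, so exactly one coordinate -- say the $p$-th -- reaches $\tb$, the set-valued branch of \eqref{eqn:gi} is inactive, and $G$ acts as the single-valued affine map $G(\tau)=(1+\e)(\tau-\tau_p e_p)$ with $\tau_p=\tb$. Taking the nearest extended line $\wt\ell_k$, whose base point $\wt\tau^k$ also satisfies $(\wt\tau^k)_p=\tb$, the defining relation \eqref{eqn:ANcondition} guarantees that $G(\wt\tau^k)=\wt\tau^{k'}$ is the base point of the extended line $\wt\ell_{k'}$ of the cyclically reordered configuration. Writing $w:=\tau-\wt\tau^k$ and $\Pi:={\bf I}-\frac{1}{N}\one\one^\top$, the fact that $w_p=\tb-\tb=0$ gives $|G(\tau)|_{\wt\ell_{k'}}=|\Pi(1+\e)(w-w_p e_p)|=(1+\e)\,|\Pi w|=(1+\e)\,|\tau|_{\wt\ell_k}$, and Lemma~\ref{lem:ordering} ensures that $\wt\ell_{k'}$ is the line realizing $V(G(\tau))$. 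Hence $V(G(\tau))-V(\tau)=-|\e|\,V(\tau)$ with $0<1+\e<1$. I expect the main obstacle to be exactly this bookkeeping -- certifying that the line nearest to $\tau$ maps to the line nearest to $G(\tau)$ -- which is why one first passes to the fixed-ordering restriction of $\HS_N$, where $G$ is single-valued and \cite[Theorem 8.2]{teel2012hybrid} applies cleanly.

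Finally I would assemble the two conclusions. Since every maximal solution flows only for a bounded time between consecutive jumps (the largest timer reaches $\tb$ in finite time), one has $j\to\infty$ whenever $t+j\to\infty$, so $V(\tau(t,j))\le(1+\e)^{\,j}V(\tau(0,0))\to0$ and thus $|\tau|_\A\to0$; combined with the nonincrease of $V$ (stability via the sandwich bounds) this yields asymptotic stability. For the basin, Lemma~\ref{lem:ordering} keeps a solution started in $P_N\setminus\X$ strictly ordered, hence in $P_N\setminus\X$, so the above gives $P_N\setminus\X\subset\mathcal{B}_\A$; conversely Lemma~\ref{lem:defX} produces, from every $\tau(0,0)\in\X$, a solution remaining in $\S$ for all time, which cannot converge to $\A$ because $\S$ is bounded away from $\A$, so $\X$ is excluded and $\mathcal{B}_\A=P_N\setminus\X$ exactly. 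For weak global asymptotic stability, from any initial condition in $\X$ I would flow until an equal pair of timers reaches $\tb$ together (possibly after intermediate jumps) and then invoke the set-valued branch of \eqref{eqn:gi} to split them, producing a complete solution that enters $P_N\setminus\X$ and therefore converges to $\A$ by the first part; together with stability this establishes the weak global property.
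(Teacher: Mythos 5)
Your proposal is correct in substance and follows the same overall skeleton as the paper (same Lyapunov function $V$, flow-invariance of the distance to the extended lines, exclusion of $\X$, and use of the set-valued branch of \eqref{eqn:gi} for weak global asymptotic stability), but it replaces the paper's two central technical devices with genuinely different ones. For the jump decrease, the paper proceeds by explicit computation: it solves $\Gamma\tau_s=b$ (Lemma~\ref{eqn:tauSsolution}), writes every component of $\wt\tau^{k}$ and $\wt\tau^{k'}$ through the geometric-sum identities (Lemmas~\ref{lem:consum1} and~\ref{lem:consum2}), and checks entry by entry that the vector inside the norm of $V(G(\tau))$ is $(1+\e)$ times the one inside $V(\tau)$. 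Your argument --- $G$ acts as the affine map $\tau\mapsto(1+\e)(\tau-\tau_p e_p)$, the relevant base point satisfies $\wt\tau^{k}_p=\tb$ so the offset $w=\tau-\wt\tau^{k}$ has $w_p=0$, and distances to lines with direction $\one$ are computed through $\Pi={\bf I}-\frac{1}{N}\one\one^\top$ --- yields the same identity coordinate-free, is far shorter, and makes transparent why the factor $(1+\e)$ appears; the only imprecision is that $G(\wt\tau^{k})$ need not equal $\wt\tau^{k'}$, only lie on $\wt\ell_{k'}$, which is harmless because $\Pi$ annihilates translations along $\one$. (Both you and the paper assert without a complete proof that the nearest line to a strictly ordered $\tau$ is the one with matching ordering and that the shifted line realizes $V(G(\tau))$; that bookkeeping step is shared, so it is not a defect of your route.) For convergence, the paper invokes the invariance principle \cite[Theorem 8.2]{teel2012hybrid} on a restricted system and argues that the largest weakly invariant set forces the level $r'=0$, whereas you use the decay $V(\tau(t,j))\le(1+\e)^{j}V(\tau(0,0))$ together with the fact that $j\to\infty$ along complete solutions; your argument is more elementary and delivers a convergence rate for free (it is essentially how the paper later proves Theorem~\ref{thm:timetoconverge}).

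One caveat on your framing: you propose to verify the hypotheses of \cite[Theorem 8.2]{teel2012hybrid} ``after restricting attention to $P_N\setminus\X$,'' but that set is not closed, so the restricted system violates the hybrid basic condition (A1) and the invariance principle cannot be applied to it as stated. The paper's device for exactly this obstacle is the $v$-inflation: it removes the \emph{open} set $\X_{v}=\{\tau\in\reals^N:|\tau|_{\X}<v\}$ from $C$ and $D$, obtaining closed sets $\wt{C}$ and $\wt{D}$ on which \cite[Theorem 8.2]{teel2012hybrid} is legitimate, and then recovers the basin $P_N\setminus\X$ by letting $v$ range over $(0,v^{*})$. Since your actual convergence argument is the direct decay estimate and never truly uses the invariance principle, this does not break your proof; but if you keep the Theorem 8.2 framing you need the inflation (or an equivalent closed restriction) --- the ``fixed-ordering restriction'' you mention is too vague, since the closure of a fixed-ordering region meets $\S$.
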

\begin{proof}
Let the set $\X_{v}$ define the $v$-inflation of $\X$ (defined in Lemma~\ref{lem:defX}), that is, the open set\footnote{The set $\X_{v}$ is open since every point $\tau \in \X_{v}$ is an interior point of $\X_{v}$.} $\X_{v} := \{\tau \in \reals^N : |\tau|_{\X} < v\}$, where $v \in (0,v^{*})$ and $v^{*} = \min_{x\in\X, y\in\wt\A} |x-y|$. Given any $v \in (0,v^{*})$, we now consider a restricted hybrid system $\wt\HS_{N} = (f,\wt{C},G,\wt{D})$, where $\wt{C} := C \setminus \X_{v}$ and $\wt{D} := D \setminus \X_{v}$, which are closed. We establish that $\wt\A$ is an asymptotically stable set for $\wt\HS_N$.

Note that the continuous function $V$, given by \eqref{eqn:lyapunov}, is defined as the minimum distance from $\tau$ to $\wt\A$, where $\wt\A$ is the union of $N!$ sets $\wt\ell_{k}$ in \eqref{eqn:wtellk}. To determine the change of $V$ during flows\footnote{
Its derivative can be computed using Clarke's generalized gradient \cite{Clarke90}.}, we consider the relationship between the flow map and the sets $\wt\ell_{k}$. The inner product between a vector pointing in the direction of the set $\wt\ell_{k}$ and the flow map on $\wt{C}$ satisfies $$\one^{\top}f(\tau) = \one^{\top}(\omega\one) = \omega N=|\one||\omega \one| = |\one||f(\tau)|\cos \theta$$, which is only true if $\theta$ is zero. Therefore, the direction of the flow map and of the vector defining $\wt\ell_{k}$ are parallel, implying that the distance to the set $\wt\A$ is constant during flows.

The change in $V$ during jumps is given by $V(G(\tau)) - V(\tau)$ for $\tau \in \wt D \setminus \wt\A$. Due to the fact that we can rearrange the components of $\tau \in P_N \setminus \X$, without loss of generality, we consider a single jump condition, namely, we consider $\tau$ such that $\tb = \tau_{1} > \tau_{2} > \ldots > \tau_{N-1} > \tau_{N}$. Using the formulation in Section~\ref{sec:AN} and Lemma~\ref{eqn:tauSsolution}, the elements of the vector $\wt\tau^{k}$ associated with $\wt\ell_{k}$ for this case of $\tau$ are given by $\wt\tau_{i}^{k} = \frac{\sum_{p=0}^{N-i}(\e+1)^p}{\sum_{p=0}^{N-1}(\e+1)^p}\tb$, which by Lemma~\ref{lem:consum1} is equal to $\frac{(\e+1)^{N - i +1} - 1}{(\e+1)^{N} - 1}\tb$.
After the jump, $G(\tau)$ is single valued and is such that its elements are ordered as follows: $g_{2}(\tau) > g_{3}(\tau) > \ldots > g_{N}(\tau) > g_{1}(\tau) = 0.$ Specifically, the jump map is $G(\tau) = [0,(1+\e)\tau_{2},\ldots, (1+\e)\tau_{N}]^{\top}$. Then, the 
formulation in Section~\ref{sec:AN} and {Lemma~\ref{eqn:tauSsolution}} leads to a case of $\wt\tau^{k}$ denoted as $\wt\tau^{k'}$.
By {Lemma~\ref{lem:consum1}}, the elements of the vector $\wt\tau^{k'}$ are given by $\wt\tau^{k'}_{1} = \frac{\e}{(\e+1)^{N} - 1}\tb$ and 
$\wt\tau^{k'}_{i} = \frac{(\e+1)^{N - i +2} - 1}{(\e+1)^{N} - 1}\tb$ 
for $i > 1$.
Due to the ordering of $\tau$ and $G(\tau)$, $\wt\tau^{k'}$ is a one-element shifted (to the right) version of $\wt\tau^{k}$. 

From the definition of $\wt\tau^{k}$ above, $V$ at $\tau$ reduces to $$V(\tau) = |\tau|_{\wt\ell_{k}} = \left|(\wt\tau^{k} - \tau) - \frac{1}{N}((\wt\tau^{k} - \tau)^{\top}\one)\one \right|$$ for some $k$. Note that 
$$(\wt\tau^{k} - \tau)^{\top}\one = \sum_{i = 1}^{N} \wt\tau^{k}_{i} - \sum_{i=1}^{N} \tau_{i}$$ reduces to  $\sum_{i = 2}^{N} \wt\tau^{k}_{i} - \sum_{i=2}^{N} \tau_{i}$ 
since $\tau_{1} = \wt\tau_{1}^{k} = \tb$. Using {Lemmas~\ref{lem:consum1} and \ref{lem:consum2}},
it follows that 
$$\sum_{i = 2}^{N} \wt\tau^{k}_{i}= \frac{\sum_{i=2}^N \sum_{p=0}^{N-i}(\e+1)^p}{\sum_{p=0}^{N-1}(\e+1)^p}\tb = \frac{((\e +1)^{N} - 1) - N\e}{\e((\e+1)^{N} - 1)}\tb.$$
Then, the first element of the vector inside 
the norm in the expression of $V(\tau)$ is given as 
\begin{align*}
&(\wt\tau^{k}_{1} - \tau_{1}) - \frac{1}{N}\left(\frac{((\e +1)^{N} - 1 )- N\e}{\e((\e+1)^{N} - 1)}\tb - \sum_{i=2}^{N} \tau_{i}\right) 
\\ & \qquad\qquad\qquad\qquad= -\frac{((\e +1)^{N} - 1) - N\e}{\e N((\e+1)^{N} - 1)}\tb + \frac{1}{N}\sum_{i=2}^{N} \tau_{i},\end{align*} while
the elements with $m \in \{2,3,\ldots,N\}$ 
are given by 
\IfConf{
\begin{align*}
&(\wt\tau_{m}^{k} -\tau_{m}) - \frac{1}{N}\left(\frac{((\e +1)^{N} - 1) - N\e}{\e((\e+1)^{N} - 1)}\tb - \sum_{i=2}^{N} \tau_{i}\right) \\
&= \left(\frac{(\e+1)^{N - m +1} - 1}{(\e+1)^{N} - 1}\tb -\tau_{m}\right) 
\\ &\qquad \qquad \qquad \qquad - \frac{1}{N}\left(\frac{((\e +1)^{N} - 1) - N\e}{\e((\e+1)^{N} - 1)}\tb - \sum_{i=2}^{N} \tau_{i}\right) \\
&= \frac{\e N(\e+1)^{N - m +1} - ((\e +1)^{N} - 1)}{\e N((\e+1)^{N} - 1)}\tb -\frac{N-1}{N}\tau_{m} \\ & \qquad \qquad \qquad \qquad \qquad \qquad \qquad \qquad \qquad + \frac{1}{N}\sum_{i=2,i\neq m}^{N} \tau_{i}.
\end{align*}
}{
\begin{eqnarray*}(\wt\tau_{m}^{k} -\tau_{m}) - \frac{1}{N}\left(\frac{((\e +1)^{N} - 1) - N\e}{\e((\e+1)^{N} - 1)}\tb - \sum_{i=2}^{N} \tau_{i}\right) 
&=& \left(\frac{(\e+1)^{N - m +1} - 1}{(\e+1)^{N} - 1}\tb -\tau_{m}\right)- \\ && \qquad \qquad \qquad \qquad  \frac{1}{N}\left(\frac{((\e +1)^{N} - 1) - N\e}{\e((\e+1)^{N} - 1)}\tb - \sum_{i=2}^{N} \tau_{i}\right) \\
&=& \frac{\e N(\e+1)^{N - m +1} - ((\e +1)^{N} - 1)}{\e N((\e+1)^{N} - 1)}\tb \\ && \qquad \qquad \qquad \qquad \qquad \qquad -\frac{N-1}{N}\tau_{m} + \frac{1}{N}\sum_{i=2,i\neq m}^{N} \tau_{i}.
\end{eqnarray*}
}

After the jump at $\tau$, since $G(\tau)$ is single valued, $V(G(\tau))$ is given by $$|G(\tau)|_{\wt\ell_{k'}} = \left|(\wt\tau^{k'} - G(\tau)) - \frac{1}{N}((\wt\tau^{k'} - G(\tau))^{\top}\one)\one \right|.$$ 
Note that $(\wt\tau^{k'} - G(\tau))^{\top}\one = \sum_{i = 1}^{N} \wt\tau^{k'}_{i} - \sum_{i = 1}^{N} g_{i}(\tau)$ reduces to $\sum_{i = 1}^{N} \wt\tau^{k'}_{i} - \sum_{i = 2}^{N} (1+ \e)\tau_i$, since $g_{1}(\tau) = 0$ and $g_{i}(\tau) = (1+\e)\tau_{i}$  for $i > 1$. Using {Lemmas~\ref{lem:consum1} and \ref{lem:consum2}}, it 
follows that 
\begin{align*}
\sum_{i = 1}^{N} \wt\tau^{k'}_{i} &= \frac{\sum_{i=1}^N \sum_{p=0}^{N-i}(\e+1)^p}{\sum_{p=0}^{N-1}(\e+1)^p}\tb \\ &= \frac{(\e+1)((\e+1)^{N} - 1) - N\e}{\e((\e+1)^{N} - 1)}\tb
\end{align*}
which leads to $$(\wt\tau^{k'} - G(\tau))^{\top}\one = \frac{(\e+1)((\e+1)^{N} - 1) - N\e}{\e((\e+1)^{N} - 1)}\tb  - \sum_{i = 2}^{N} (1+\e)\tau_{i}.$$ The first element inside 
the norm in $V(G(\tau))$ is given by 
\begin{align*}
&(\wt\tau_{1}^{k'} - g_1(\tau)) - \frac{1}{N}\left(\frac{(\e+1)((\e+1)^{N} - 1) - N\e}{\e((\e+1)^{N} - 1)}\tb \right. \\ & \qquad \qquad \qquad \qquad \qquad \qquad \qquad \qquad \qquad \left. - \sum_{i = 2}^{N} (1+\e)\tau_{i}\right) \\
& = \frac{\e}{(\e+1)^{N} - 1}\tb - \frac{(\e+1)((\e+1)^{N} - 1) - N\e}{\e N((\e+1)^{N} - 1)}\tb  \\ 
& \qquad \qquad \qquad \qquad \qquad \qquad \qquad \qquad \qquad + \frac{1}{N}\sum_{i = 2}^{N} (1+\e)\tau_{i} \\
 &= (1+\e)\left(-\frac{((\e+1)^{N} - 1) - N\e}{\e N((\e+1)^{N} - 1)}\tb + \frac{1}{N}\sum_{i = 2}^{N} \tau_{i}\right).
\end{align*} For each element $m > 1$,
it follows that 

\IfConf{\begin{align*}
&(\wt\tau^{k'}_{m} - g_{m}(\tau)) - \frac{1}{N}\left(\frac{(\e+1)((\e+1)^{N} - 1) - N\e}{\e((\e+1)^{N} - 1)}\tb \right. \\ &\left. \qquad\qquad\qquad\qquad\qquad\qquad\qquad\qquad\qquad - \sum_{i = 2}^{N} (1+\e)\tau_{i}\right)  \\ &  =
\frac{(\e+1)^{N - m +2} - 1}{(\e+1)^{N} - 1}\tb - (1+\e)\frac{N-1}{N}\tau_{m} \\ & \qquad  - \frac{(\e+1)((\e+1)^{N} - 1) - N\e}{\e N((\e+1)^{N} - 1)}\tb  + \frac{1}{N}\sum_{i = 2, i \neq m}^{N} (1+\e)\tau_{i} \\ &  
= (1+\e) \left(\frac{\e N(\e+1)^{N - m +1} - ((\e+1)^{N} - 1)}{\e N((\e+1)^{N} - 1)}\tb \right. \\ 
& \left.  \qquad\qquad\qquad\qquad\qquad\qquad - \frac{N-1}{N}\tau_{m} + \frac{1}{N}\sum_{i = 2, i \neq m}^{N} \tau_{i} \right).\end{align*} }
{\begin{eqnarray*}
(\wt\tau^{k'}_{m} - g_{m}(\tau)) &-& \frac{1}{N}\left(\frac{(\e+1)((\e+1)^{N} - 1) - N\e}{\e((\e+1)^{N} - 1)}\tb -  \sum_{i = 2}^{N} (1+\e)\tau_{i}\right)  \\ 
&=& \frac{(\e+1)^{N - m +2} - 1}{(\e+1)^{N} - 1}\tb - (1+\e)\frac{N-1}{N}\tau_{m}  - \frac{(\e+1)((\e+1)^{N} - 1) - N\e}{\e N((\e+1)^{N} - 1)}\tb  + \frac{1}{N}\sum_{i = 2, i \neq m}^{N} (1+\e)\tau_{i} \\   
&=& (1+\e) \left(\frac{\e N(\e+1)^{N - m +1} - ((\e+1)^{N} - 1)}{\e N((\e+1)^{N} - 1)}\tb - \frac{N-1}{N}\tau_{m} + \frac{1}{N}\sum_{i = 2, i \neq m}^{N} \tau_{i} \right).\end{eqnarray*} }

Combining the expressions for each of the elements inside the norm of $V(G(\tau))$, it follows that $V(G(\tau)) = (1+\e) V(\tau)$. 
 
Then, the change during jumps is given by 
$V(G(\tau)) - V(\tau) = \e V(\tau)$ where 
$\e \in (-1,0)$. With the property of $V$ during 
flows established above, the change of
$V$ along solutions 
is bounded during flows and jumps by the nonpositive functions 
$u_{\wt{C}}$ and $u_{\wt{D}}$, respectively, defined as follows:  
$u_{\wt{C}} (z)= 0$ for each 
$z \in \wt{C}$ and $u_{\wt{C}} (z)= -\infty$ 
otherwise; $u_{\wt{D}}(z) = \e V(z)$ 
for each $z \in \wt{D}$ and $u_{\wt{D}}(z) = -\infty$ otherwise. Using Lemma~\ref{lem:BasicConds}, the fact that $\wt{C}$ and $\wt{D}$ are closed, and the fact that every maximal solution to $\wt\HS$ is bounded 
and complete, by 
\cite[Theorem 8.2]{teel2012hybrid}, every maximal solution to $\wt\HS_N$ approaches the 
largest weakly invariant subset of 
$L_{V}(r') \cap \wt{C} \cap [L_{u_{\wt{C}}}(0) \cup (L_{u_{\wt{D}}}(0) \cap G(L_{u_{\wt{C}}}(0)))] = L_{V}(r') \cap \wt{C}$ 
for $r' \in V(\wt{C})$. Since every maximal solution jumps an infinite number of times, the 
largest invariant set is given for $r' = 0$ due to the fact that $V(G(\tau)) - V(\tau) = \e V(\tau) < 0$ if $r' > 0$. 
\startmodif Then, the largest invariant set is given by $L_{V}(0) \cap \wt{C} = \wt\A \cap \wt{C}$ which is identically equal to $\A$\stopmodif. Hence, 
the set $\A$ is attractive. 
\startmodif Stability is guaranteed from the fact that $V$ is nonincreasing during flows and strictly decreasing during jumps. 
Then, the set $\wt\A$ is asymptotically stable for the hybrid system $\wt\HS_{N}$. 
We have that $\A$ is (strongly) forward invariant and from Theorem 3.4 we know that $\A$ is uniformly attractive from a neighborhood of itself. Then by Proposition 7.5 in \cite{teel2012hybrid}, it follows that $\A$ is asymptotically stable. 
\stopmodif

Note that the set of solutions to $\wt\HS_{N}$ coincides with the set of solutions to $\HS_{N}$ from $P_{N} \setminus \X_{v}$. Therefore, the set $\A$ 
is asymptotically stable for $\HS_N$ with basin of attraction ${\cal B}_{\A} = P_{N} \setminus \X_{v}$. \seannew{Since $v$ is arbitrary, it follows that the basin of attraction is equal to \startmodif $P_{N} \stopmodif \setminus \X$.}

Note that the jump map $G$, at points $\tau \in \X$, is set valued by definition of $g_i$ in \eqref{eqn:gi}. From these points there exist solutions to $\HS_N$ that jump out of $\X$. 
In fact, consider the case $\tau \in \X$. We have that $\tau_{i} = \tau_r$ for some $i,r \in I$. Then, after the jump it follows that $g_{i}(\tau) \in \{0, (1+\e)\tb\}$ and $g_{r}(\tau) \in \{0, (1+\e)\tb\},$ and there exist $g_{i}$ and $g_{r}$ such that $g_{i} = g_{r}$ or $g_{i} \neq g_{r}$. 
Since for every point in $\X$ there exists a solution that converges to $\A$ and also a solution that stays in $\X$, $\X$ is weakly forward invariant.\footnote{For example, consider the case $N = 2$. 
If $\tau(0,0) = [\tb,\tb]^\top \in D$, then there are nonunique solutions due to the jump map 
begin set valued. It follows that after the jump, each $\tau_i$ can be mapped to any point in 
$\{0,\tau_i(1+\e)\}$, which leads to any of the following four options of the states $(\tau_1,\tau_2)$ 
after such a jump: $(0,0),(0,\tb(1+\e)),(\tb(1+\e),0)$ or $(\tb(1+\e),\tb(1+\e))$. If 
the state is mapped 
to either $(0,0)$ 
or $(\tb(1+\e),\tb(1+\e))$, then it remains in $\X_2$. Conversely, if any of the other options are chosen, then 
$(\ton,\ttw)$ leaves $\X_2$ and converges 
to $\A$ asymptotically.} 

\end{proof}
\vspace{-.5cm}
\subsection{Characterization of Time of Convergence}\label{sec:timetoconverge}
In this section, we characterize the time to converge to a neighborhood of $\A$. The proposed (upper bound) of the time to converge depends on the initial distance to the set $\wt\A$ and the parameters of the hybrid system $(\varepsilon,\tb)$. 
\begin{theorem}\label{thm:timetoconverge}
For every $N \in \nats$, $N > 1$, and every $c_1, c_2$ such that $\overline{c} > c_2 > c_1 > 0$ with $\overline{c} = \max_{x \in \X} |x|_{\widetilde\A}$, every maximal solution 
to $\HS_N$ with initial condition $\tau(0,0) \in (P_{N} \setminus \X) \cap \wt L_{V}(c_2)$ is 
such that
\IfConf{$
\tau(t,j) \in \wt L_{V}(c_1)$ for each $(t,j) \in \dom \tau, t+j \geq  M,$
where
$M = \left(\frac{\tb}{\omega}+1\right)\frac{\log \frac{c_{2}}{c_{1}}}{\log \frac{1}{1+\varepsilon}}$
and $\wt L_{V}(\mu) := \{\tau \in C\cup D : V(\tau) \leq \mu \}$. }{
$$
\tau(t,j) \in \wt L_{V}(c_1) \ \ \forall (t,j) \in \dom \tau, t+j \geq  M, $$
where
$$M = \left(\frac{\tb}{\omega}+1\right)\frac{\log \frac{c_{2}}{c_{1}}}{ \log \frac{1}{1+\varepsilon}}$$
and $\wt L_{V}(\mu) := \{\tau \in C\cup D : V(\tau) \leq \mu \}$. }
\end{theorem}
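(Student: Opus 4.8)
The plan is to reduce the statement to a count of the number of jumps and then to convert that count into a bound on hybrid time. First I would recall from the proof of Theorem~\ref{thm:stability} the two facts that drive the argument: along any solution evolving in $P_N \setminus \X$, the function $V$ in \eqref{eqn:lyapunov} is constant during flows and satisfies $V(G(\tau)) = (1+\e)V(\tau)$ at every jump. Since $\tau(0,0) \in (P_N\setminus\X)\cap \wt L_{V}(c_2)$ and, by the development leading to Theorem~\ref{thm:stability}, the solution remains in $P_N\setminus\X$ (so that $V$ is defined along it), a straightforward induction over the hybrid time domain gives $V(\tau(t,j)) = (1+\e)^{j} V(\tau(0,0)) \le (1+\e)^{j} c_2$ for every $(t,j)\in\dom\tau$. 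Thus the solution is guaranteed to lie in $\wt L_{V}(c_1)$ as soon as $(1+\e)^{j} c_2 \le c_1$.

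Next I would solve this inequality for $j$. Because $\e\in(-1,0)$ we have $0 < 1+\e < 1$ and $\log\frac{1}{1+\e} > 0$, so $(1+\e)^{j} c_2 \le c_1$ is equivalent to $j \ge j^{*} := \frac{\log(c_2/c_1)}{\log\frac{1}{1+\e}}$. Moreover, since $V$ is nonincreasing along solutions, once $V(\tau(t,j))\le c_1$ holds it continues to hold for all larger hybrid times; hence it suffices to show that $t+j \ge M$ forces at least $j^{*}$ jumps to have occurred.

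The main step is to translate a lower bound on $t+j$ into a lower bound on $j$, which rests on bounding the continuous duration of each flow interval. Since $f(\tau)=\omega\one$ with $\omega>0$ and, by \eqref{eqn:gi}, every jump maps the state back into $P_N=[0,\tb]^{N}$, after any jump the largest timer component reaches the threshold $\tb$ after flowing for a time at most $\tb/\omega$; consequently each flow interval of the solution has length at most $\tb/\omega$. By Lemma~\ref{lem:solutions2HS} every maximal solution is complete, and because it cannot flow for more than $\tb/\omega$ before meeting $D$, it undergoes infinitely many jumps. Accounting for $t+j$, each jump is preceded by a flow of duration at most $\tb/\omega$ and contributes one unit to $j$, so reaching the $j$-th jump costs at most $j(\tb/\omega+1)$ units of hybrid time; equivalently, at $(t,j)\in\dom\tau$ one has $j \ge (t+j)/(\tb/\omega+1)$. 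Therefore, if $t+j\ge M = (\tb/\omega+1)\,j^{*}$, then $j\ge j^{*}$, and combining with the previous paragraph yields $V(\tau(t,j))\le c_1$, i.e.\ $\tau(t,j)\in\wt L_{V}(c_1)$.

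I expect the bookkeeping in the last paragraph --- cleanly relating the elapsed continuous time $t$ to the number of jumps $j$ --- to be the delicate point, since one must be careful about the partial flow interval accrued after the final counted jump and about the integrality of $j$ versus the real threshold $j^{*}$. The role of the standing hypotheses $\overline c > c_2 > c_1 > 0$ with $\overline c = \max_{x\in\X}|x|_{\wt\A}$ is to keep the relevant sublevel set $\wt L_{V}(c_2)$ inside the basin $P_N\setminus\X$, guaranteeing that $V$ is well defined and that the factor-$(1+\e)$ contraction at jumps applies throughout the solution.
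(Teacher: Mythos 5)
Your overall route is exactly the paper's: $V$ is constant along flows and contracts by the factor $(1+\e)$ at each jump, so one needs $j^* = \frac{\log (c_2/c_1)}{\log\frac{1}{1+\e}}$ jumps, and each flow interval lasting at most $\tb/\omega$ is what converts a bound on $t+j$ into a bound on $j$. The gap is in the step you yourself flagged as delicate: the claim ``reaching the $j$-th jump costs at most $j(\tb/\omega+1)$ units of hybrid time; \emph{equivalently}, at $(t,j)\in\dom\tau$ one has $j \ge (t+j)/(\tb/\omega+1)$'' is not an equivalence, and the displayed inequality is false. It fails already on the first flow interval: there $j=0$ while $t$ can be arbitrarily close to $\tb/\omega$, so $(t+j)/(\tb/\omega+1)$ can be as large as $\frac{\tb/\omega}{\tb/\omega+1}>0=j$. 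The correct per-point bound must account for the current, uncompleted flow interval: $t \le t_{j+1} \le (j+1)\tb/\omega$, hence $t+j \le j(\tb/\omega+1)+\tb/\omega$, i.e.\ $j \ge \frac{(t+j)-\tb/\omega}{\tb/\omega+1}$. Feeding $t+j\ge M=(\tb/\omega+1)j^*$ into this corrected bound only yields $j > j^*-1$, which for integer $j$ gives $j\ge\lfloor j^*\rfloor$; when $j^*\notin\nats$ this is one jump short, and in the worst case $V(\tau(0,0))=c_2$ one has $(1+\e)^{\lfloor j^*\rfloor}c_2 > c_1$, so your argument as written does not establish the stated threshold.

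The paper's proof does the bookkeeping in a way that avoids this trap: it fixes the integer $J=\left\lceil j^*\right\rceil$, lets $t_J$ be the actual $J$-th jump time of the given solution (so $t_J\le J\tb/\omega$), and argues by contradiction using monotonicity of the jump times: if $(t,j)\in\dom\tau$ with $j\le J-1$, then $t\le t_{j+1}\le t_J$, so $t+j\le t_J+J-1 < t_J+J\le (\tb/\omega+1)J$; hence $t+j\ge(\tb/\omega+1)J$ forces $j\ge J\ge j^*$ and therefore $V(\tau(t,j))\le(1+\e)^Jc_2\le c_1$. Note that this delivers the conclusion for the threshold $(\tb/\omega+1)\left\lceil j^*\right\rceil$ --- indeed the paper's own proof ends with $M=(\tb/\omega+1)\left\lceil \frac{\log (c_2/c_1)}{\log\frac{1}{1+\e}}\right\rceil$, and the missing ceiling in the theorem's displayed $M$ is a discrepancy internal to the paper. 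To repair your write-up, either adopt the paper's comparison against $t_J+J$ with $J=\left\lceil j^*\right\rceil$, or keep your per-point inequality in its corrected form and enlarge $M$ accordingly (e.g.\ to $(\tb/\omega+1)\left\lceil j^*\right\rceil+\tb/\omega$). The rest of your argument --- the contraction identity, the invariance of $P_N\setminus\X$, and the monotonicity of $V$ guaranteeing persistence in $\wt L_{V}(c_1)$ --- matches the paper and is sound.
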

\IfConf{\begin{figure}
\centering
\psfrag{09}[][][.35]{-$0.9$}
\psfrag{08}[][][.35]{-$0.8$}
\psfrag{07}[][][.35]{-$0.7$}
\psfrag{06}[][][.35]{-$0.6$}
\psfrag{05}[][][.35]{-$0.5$}
\psfrag{04}[][][.35]{-$0.4$}
\psfrag{03}[][][.35]{-$0.3$}
\psfrag{02}[][][.35]{-$0.2$}
\psfrag{01}[][][.35]{-$0.1$}
\psfrag{Nstar}[][][1][-90]{$\frac{T^{*}}{\tb+1}$}
\psfrag{eps}[][][1]{$\varepsilon$}
\psfrag{data1}[][][.55]{\hspace{8mm}$c_{1} = 0.5$}
\psfrag{data2}[][][.55]{\hspace{8mm}$c_{1} = 0.3$}
\psfrag{data3}[][][.55]{\hspace{8mm}$c_{1} = 0.1$}
\psfrag{data4}[][][.55]{\hspace{10mm}$c_{1} = 0.05$}
\includegraphics[width=.49\textwidth,trim = 6mm 4mm 14mm 4mm, clip]{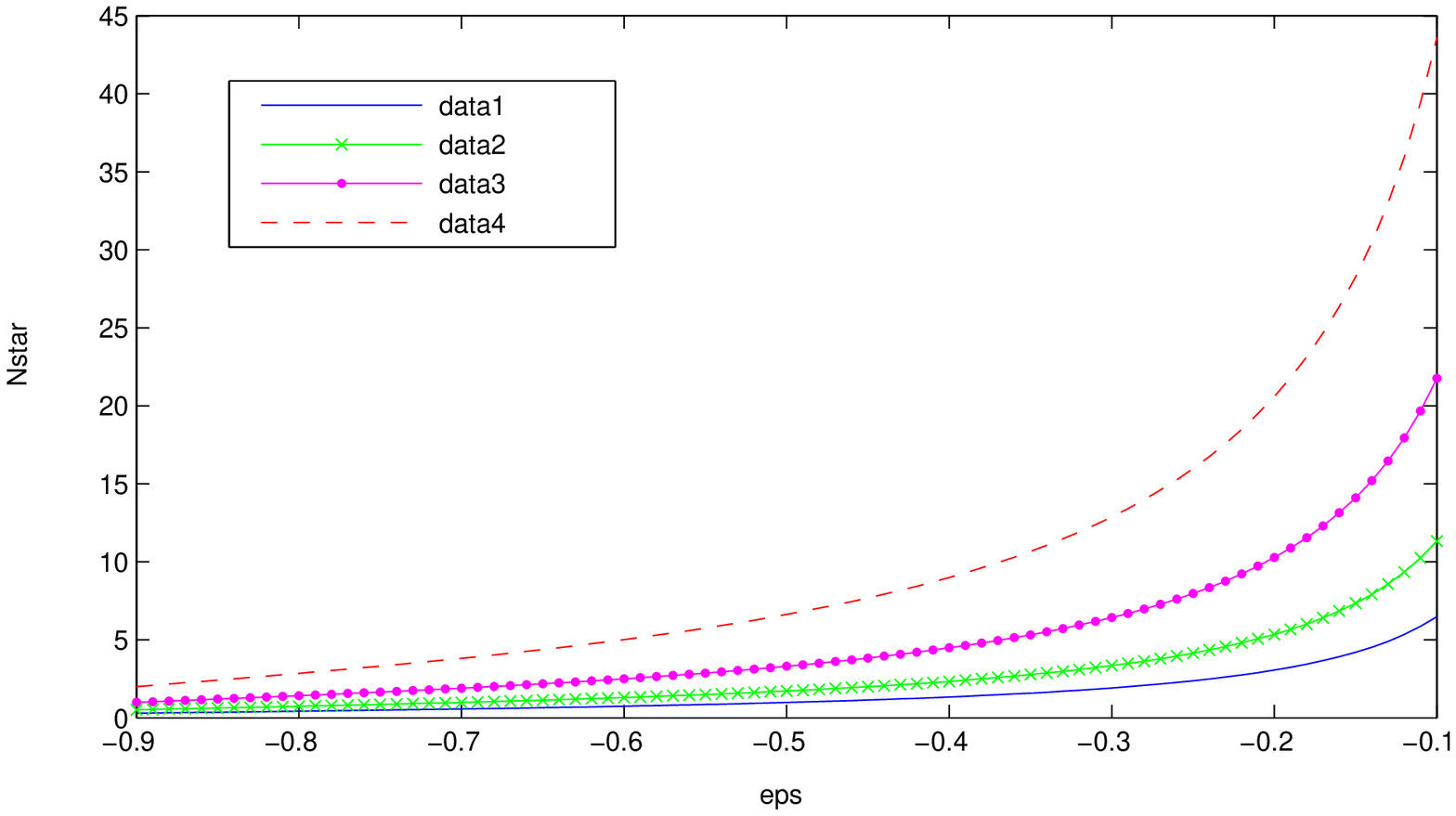}
\caption{Time to converge (over $\bar{\tau}+1$) as a function of $\varepsilon \in [-0.9,-0.1]$, with $c_{2} = 0.99\tb$ and $c_{1} \in \{0.5\tb, 0.3\tb, 0.1\tb, 0.05\tb\}$}
\label{fig:timetoconverge}
\end{figure}}{
\begin{figure}
\centering
\psfragfig*[width=.7\textwidth,]{Figures/NstarE}{
\psfrag{09}[][][.5]{-$0.9$}
\psfrag{08}[][][.5]{-$0.8$}
\psfrag{07}[][][.5]{-$0.7$}
\psfrag{06}[][][.5]{-$0.6$}
\psfrag{05}[][][.5]{-$0.5$}
\psfrag{04}[][][.5]{-$0.4$}
\psfrag{03}[][][.5]{-$0.3$}
\psfrag{02}[][][.5]{-$0.2$}
\psfrag{01}[][][.5]{-$0.1$}
\psfrag{Nstar}[][][1][-90]{$\frac{T^{*}}{\tb+1}$}
\psfrag{eps}[][][1]{$\varepsilon$}
\psfrag{data1}[][][.7]{\hspace{7mm}$c_{1} = 0.5$}
\psfrag{data2}[][][.7]{\hspace{7mm}$c_{1} = 0.3$}
\psfrag{data3}[][][.7]{\hspace{7mm}$c_{1} = 0.1$}
\psfrag{data4}[][][.7]{\hspace{8.5mm}$c_{1} = 0.05$}
}
\caption{Time to converge (over $\bar{\tau}+1$) as a function of $\varepsilon \in [-0.9,-0.1]$, with $c_{2} = 0.99\tb$ and $c_{1} \in \{0.5\tb, 0.3\tb, 0.1\tb, 0.05\tb\}$}
\label{fig:timetoconverge}
\vspace{-.3cm}
\end{figure}}

\begin{proof}
Let $\tau_0 = \tau(0,0)$ and pick a maximal solution $\tau$ to $\HS_{N}$ from $\tau_{0}$. At every 
jump time $(t_{j},j) \in \dom \tau$, define $\bar{g}_1 = \tau(t_1,1)$, $\bar{g}_2 = \tau(t_2,2), \ldots, \bar{g}_J = \tau(t_J,J)$, for some $J \in \nats$. From Theorem~\ref{thm:stability}, we have that there is no change in the Lyapunov 
function during flows. Furthermore, we have that for each $\tau \in D \setminus \A$ the difference $V(G(\tau)) - V(\tau) = \varepsilon V(\tau)$ with $\varepsilon \in (-1,0)$. Since, for every $j,$ $\tau(t_{j},j) \in D$, we have
\IfConf{$V(\bar g_1) - V(\tau_0) = \varepsilon V(\tau_0),$}{
$$V(\bar g_1) - V(\tau_0) = \varepsilon V(\tau_0),$$}
which implies
\IfConf{$V(\bar g_1) =(1+ \varepsilon) V(\tau_0) .$}{
$$V(\bar g_1) =(1+ \varepsilon) V(\tau_0) .$$}
At the next jump, we have
\IfConf{$V(\bar g_2) =(1+ \varepsilon) V(\bar g_1) = (1+ \varepsilon)^2 V(\tau_0).$}{
$$V(\bar g_2) =(1+ \varepsilon) V(\bar g_1) = (1+ \varepsilon)^2 V(\tau_0).$$}
Proceeding in this way, after $J$ jumps we have
\IfConf{$V(\bar g_J) =(1+ \varepsilon) V(g_{J-1}) = (1+ \varepsilon)^J V(\tau_0).$}{
$$V(\bar g_J) =(1+ \varepsilon) V(g_{J-1}) = (1+ \varepsilon)^J V(\tau_0).$$}
From $V(\bar g_{J}) = (1+\varepsilon)^{J}V(\tau_{0})$, we want to find $J$ so that $V(\bar g_{J}) \leq c_{1}$ when $V(\tau_{0}) \leq c_{2}$. Considering the worst cast for $V(\tau_{0})$, we want $(1+\varepsilon)^{J}c_{2} \leq c_{1}$, which implies $\frac{c_{2}}{c_{1}} \leq \left(\frac{1}{1+\varepsilon} \right)^{J}$, and therefore
$J = \left\lceil\frac{\log \frac{c_{2}}{c_{1}}}{ \log \frac{1}{1+\varepsilon}}\right\rceil > 0.$
For each $j$, the time between jumps satisfies
$t_1 - t_0 \leq \frac{\tb}{\omega}, 
t_2 - t_1 \leq \frac{\tb}{\omega}, 
\ldots, 
t_j - t_{j-1} \leq \frac{\tb}{\omega}.$
Then, we have that after $J$ jumps,
$
\sum_{j=1}^{J} t_{j} - t_{j-1} \leq J\frac{\tb}{\omega}.
$
With $t_{0} = 0$, the expression reduces to 
$t_{J} \leq J\frac{\tb}{\omega}  = \left\lceil\frac{\log \frac{c_{2}}{c_{1}}}{ \log \frac{1}{1+\varepsilon}}\right\rceil\frac{\tb}{\omega}.$
Then, after $t+j \geq t_{J} +J$, the solution is at least $c_{1}$ close to the set $\wt\A$. Defining $M = t_{J} + J$ we then have 
\IfConf{$M =  \left(\frac{\tb}{\omega}+1\right)\frac{\log \frac{c_{2}}{c_{1}}}{ \log \frac{1}{1+\varepsilon}}.$}{
$$M =  \left(\frac{\tb}{\omega}+1\right)\left\lceil\frac{\log \frac{c_{2}}{c_{1}}}{ \log \frac{1}{1+\varepsilon}}\right\rceil.$$} 
\end{proof}

Figure~\ref{fig:timetoconverge} shows the time to converge (divided by $\frac{\tb}{\omega}+1$) versus $\varepsilon$ with constant $c_{2} = 0.99\tb$ and varying values of $c_{1}$. As the figure indicates, the time to converge decreases as $|\e|$ increases, which confirms the intuition that the larger the jump the faster oscillators desynchronize.

\subsection{Robustness Analysis}\label{sec:robustness}
\sean{Lemma~\ref{lem:BasicConds} establishes that the hybrid model of $N$ impulse-coupled oscillators satisfies 
the hybrid basic conditions. In light of this property, the asymptotic stability property of $\A$ for $\HS_N$ is preserved under certain perturbations; i.e., asymptotic stability is robust \cite{teel2012hybrid}. In the next sections, we consider a perturbed version of $\HS_N$ and present robust stability results. In particular, we consider generic perturbations to $\HS_N$, and two different cases of   perturbations only on the timer rates to allow for heterogeneous timers.}

\subsubsection{Robustness to Generic Perturbations}
We start by revisiting the definition of perturbed hybrid systems in \cite{teel2012hybrid}.
\NotForConf{\begin{definition}[perturbed hybrid system {\cite[Definition 6.27]{teel2012hybrid}}]
Given a hybrid system $\HS$ and a function $\rho: \reals^N \to \reals_{\geq 0}$, the $\rho$-perturbation of $\HS$, denoted $\HS_\rho$, is the hybrid system 
$$
\left\{\begin{array}{cc}x \in C_\rho & \quad \dot{x} \in F_\rho(x) \\
x \in D_\rho & \quad x^+ \in G_\rho(x) \\
 \end{array} \right.
$$
where
\IfConf{$C_\rho = \{x \in \reals^n : (x + \rho(x)\ball) \cap C \neq \emptyset \}$,
$F_\rho(x) = \overline{\mathrm{con}}F((x + \rho(x)\ball)\cap C) + \rho(x) \ball$ for all $x \in \reals^n$,
$D_\rho = \{x \in \reals^n : (x + \rho(x) \ball) \cap D \neq \emptyset \}$,
and $G_\rho(x) = \{v \in \reals^n : v \in g + \rho(g)\ball, g \in G((x + \rho(x)\ball) \cap D) \}$ for all $x \in \reals^n$.
}{\begin{align*}
C_\rho &= \{x \in \reals^n : (x + \rho(x)\ball) \cap C \neq \emptyset \}, \\
F_\rho(x) &= \overline{\mathrm{con}}F((x + \rho(x)\ball)\cap C) + \rho(x) \ball \quad \forall x \in \reals^n, \\
D_\rho &= \{x \in \reals^n : (x + \rho(x) \ball) \cap D \neq \emptyset \}, \\
G_\rho(x) &= \{v \in \reals^n : v \in g + \rho(g)\ball, g \in G((x + \rho(x)\ball) \cap D) \} \quad \forall x \in \reals^n.
\end{align*}}
\end{definition}}
Using this definition, we can deduce a generic perturbed hybrid system modeling $N$ impulse-coupled oscillators. Then, for the hybrid system $\HS_N$, we denote $\HS_{N,\rho}$ as the $\rho$-perturbation of $\HS_N$. Given the perturbation function $\rho : \reals^N \to \reals_{\geq 0}$, the perturbed flow map is given by 
\IfConf{$F_\rho(\tau) = \omega\one + \rho(\tau)\ball$ for all $\tau \in C_\rho,$}{\begin{align*}
F_\rho(\tau) &= \omega\one + \rho(\tau)\ball \qquad \forall \ \tau \in C_\rho, 
\end{align*}}
where the perturbed flow set $C_\rho$ is given by 
\IfConf{$
C_\rho = \{\tau \in \reals^N : (\tau + \rho(\tau)\ball) \cap P_N \neq \emptyset\}. 
$}{\begin{align*}
C_\rho &= \{\tau \in \reals^N : (\tau + \rho(\tau)\ball) \cap P_N \neq \emptyset\}. 
\end{align*}}
For example, if $N = 2$ and $\rho(\tau) = \bar\rho > 0$ for all $\tau \in \reals^N$, which would correspond to constant perturbations on the lower value and threshold, then $C_\rho = C + \rho \ball$.
The perturbed jump map and jump set are defined as
\IfConf{$D_\rho = \{\tau \in \reals^N : (\tau + \rho(\tau)\ball )\cap D \neq \emptyset\}$,}{\begin{align*}
D_\rho &= \{\tau \in \reals^N : (\tau + \rho(\tau)\ball )\cap D \neq \emptyset\}, 
\end{align*}}
\IfConf{$G_\rho 
= [g_{1,\rho}(\tau), \ldots, g_{N,\rho}(\tau)]^\top$,}{\begin{align*}
G_\rho 
&= [g_{1,\rho}(\tau), \ldots, g_{N,\rho}(\tau)]^\top, 
\end{align*}}
where $g_{i,\rho}$ is the $i$-th component of $G_\rho$.
The following result establishes that the hybrid system $\HS_N$ is robust to small perturbations.
\begin{theorem}(robustness of asymptotic stability)\label{thm:robustofAS}
If $\rho : \reals^N \to \reals_{\geq 0}$ is continuous and positive on $\reals^{N} \setminus \A$, then $\A$ is semiglobally practically robustly $\cal{KL}$ asymptotically stable with basin of attraction $B_{\A} = P_N \setminus \X$, i.e., for every compact set $K \subset B_{\A}$ and every $\alpha > 0$, there exists $\delta \in (0,1)$ such that every maximal solution $\tau$ to $\HS_{N,\delta\rho}$ from $K$ satisfies 
$|\tau(t,j)|_{\A} \leq \beta(|\tau(0,0)|_{\A},t+j) + \alpha$ for all $(t,j) \in \dom \tau$.
\end{theorem}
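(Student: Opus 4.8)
The plan is to obtain the result as a direct consequence of the nominal asymptotic stability established in Theorem~\ref{thm:stability} together with the well-posedness of $\HS_N$ guaranteed by Lemma~\ref{lem:BasicConds}, invoking the general robustness machinery for well-posed hybrid systems in \cite{teel2012hybrid}. First I would record the two structural facts that drive the argument: (i) by Lemma~\ref{lem:BasicConds}, $\HS_N$ satisfies the hybrid basic conditions and is therefore nominally well-posed; and (ii) by Theorem~\ref{thm:stability}, the compact set $\A$ is asymptotically stable with basin of attraction $\mathcal{B}_{\A} = P_N \setminus \X$. Because $\HS_N$ is well-posed and $\A$ is compact, (pre-)asymptotic stability is equivalent to the existence of a $\mathcal{KL}$ bound on $\mathcal{B}_\A$; this furnishes a class-$\mathcal{KL}$ function $\beta$ and the nominal estimate $|\tau(t,j)|_\A \le \beta(|\tau(0,0)|_\A,\, t+j)$ for maximal solutions from $\mathcal{B}_\A$ (cf.\ \cite[Theorem 3.40]{teel2012hybrid}).

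Second, I would verify that $\delta\rho$ is an admissible perturbation radius. Since $\rho$ is continuous and positive on $\reals^N\setminus\A$, the scaled function $\delta\rho$ is continuous and nonnegative for every $\delta\in(0,1)$, so the $\delta\rho$-perturbed data $(C_{\delta\rho}, F_{\delta\rho}, D_{\delta\rho}, G_{\delta\rho})$ again satisfy the hybrid basic conditions (closedness of the inflated sets, outer semicontinuity and local boundedness of $G_{\delta\rho}$, continuity of $F_{\delta\rho}$), and the perturbed inflation shrinks to the nominal data as $\delta\to 0$. This is precisely the hypothesis needed to apply the semiglobal practical robustness result for $\mathcal{KL}$ asymptotic stability of compact sets in well-posed hybrid systems.

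Third, I would invoke that robustness theorem. Fix a compact $K \subset \mathcal{B}_\A = P_N\setminus\X$ and $\alpha>0$. Since $K$ is compact and contained in the basin, it is bounded away from $\X$, so the nominal $\mathcal{KL}$ estimate holds uniformly over $K$ with the same $\beta$. The robustness result then yields $\delta\in(0,1)$ such that every maximal solution $\tau$ to $\HS_{N,\delta\rho}$ from $K$ is complete and satisfies $|\tau(t,j)|_\A \le \beta(|\tau(0,0)|_\A,\, t+j) + \alpha$ for all $(t,j)\in\dom\tau$. Since $K$ and $\alpha$ were arbitrary, this is exactly the claimed semiglobal practical robust $\mathcal{KL}$ asymptotic stability on $\mathcal{B}_\A$.

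The main obstacle I anticipate is bookkeeping around the restricted basin rather than any deep analytic difficulty: the nominal property is only $\mathcal{KL}$ asymptotic stability on $P_N\setminus\X$ (not globally), and $\X$ is precisely the set where nominal solutions may fail to converge. I must therefore apply the robustness statement in its local/basin form, confining attention to compact $K \subset \mathcal{B}_\A$ that stay a positive distance from $\X$, and ensure that for $\delta$ small enough the perturbed solutions starting in $K$ cannot be driven into the problematic region $\X_v$ before the $\mathcal{KL}$-plus-$\alpha$ bound takes effect. This mirrors the restricted-system device $\wt\HS_N$ used in the proof of Theorem~\ref{thm:stability}, and it is the one place where the positivity of $\rho$ away from $\A$ — guaranteeing that the perturbation genuinely vanishes on the attractor — is essential.
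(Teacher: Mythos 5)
Your proposal is correct and follows essentially the same route as the paper: establish well-posedness of $\HS_N$ from Lemma~\ref{lem:BasicConds}, take the nominal asymptotic stability of the compact set $\A$ with basin $P_N\setminus\X$ from Theorem~\ref{thm:stability}, and invoke the generic robustness theorem for well-posed hybrid systems (\cite[Theorem 7.20]{teel2012hybrid}). The only cosmetic difference is that the paper explicitly cites Lemma~\ref{lem:solutions2HS} for completeness of maximal solutions as a hypothesis of that theorem, a fact your argument uses implicitly.
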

\begin{proof}
From Lemma~\ref{lem:BasicConds}, the hybrid system $\HS_N$ satisfies the hybrid basic conditions. Therefore, by \cite[Theorem 6.8]{teel2012hybrid} $\HS_N$ is nominally well-posed and, moreover, by \cite[Proposition 6.28]{teel2012hybrid} is well-posed. From the proof of Theorem~\ref{thm:stability}, we know that the set $\A$ is an asymptotically stable compact set for the hybrid system $\HS_N$ with basin of attraction $B_{\A}$. Since by Lemma~\ref{lem:solutions2HS}, every maximal solution is complete, then \cite[Theorem 7.20]{teel2012hybrid} implies that $\A$ is semiglobally practically robustly $\cal{KL}$ asymptotically stable. 
\end{proof}
Section~\ref{sec:jumpperturbs1} showcases \IfConf{an example simulation }{several simulations} of $\HS_{N}$ with $\rho$-perturbations on the jump map.
\subsubsection{Robustness to Heterogeneous Timer Rates}
We consider the case when the continuous dynamic rates are perturbed in the form of
\IfConf{$
\frac{d}{dt}|\tau(t,j)|_{\wt\A} = c(t,j)
$}{$$
\frac{d}{dt}|\tau(t,j)|_{\wt\A} = c(t,j)
$$}
for a given solution $\tau$. 
For example, consider the perturbation of the flow map given by 
\begin{eqnarray}
f(\tau) = \omega \one+ \Delta\omega \label{eqn:fdelta}
\end{eqnarray}
where $\Delta\omega \in \reals^n$ is a constant defining a perturbation 
from the natural frequencies of the impulse-coupled oscillators. 
Then \startmodif for some $k$, \stopmodif during flows, along a solution $\tau$ such that over $[t_{j},t_{j+1}]\times\{j\}$ satisfies \startmodif $V(\tau(t,j)) = |\tau(t,j)|_{\wt\ell_{k}}$, \stopmodif
it follows that $c$ reduces to \startmodif
$c(t,j) = \left(\frac{r_{\ell_{k}}^{\top}(\tau(t,j))(\frac{1}{N}{\bf \underline{1} - \bf I})}{|\tau(t,j)|_{\ell_{k}}}\right) \Delta\omega.$\stopmodif\footnote{ \startmodif
Let $r_{\ell_{k}}(\tau)$ be the vector defined by the minimum distance from $\tau$ to the line $\ell_{k}$.
Then, it follows that $V(\tau) = (r_{\ell_{k}}^{\top}(\tau)r_{\ell_{k}}(\tau))^{\frac{1}{2}}$. 
To determine its change during flows, note that on $C\setminus (\X \cup \A)$ the gradient 
is given by 
$
\nabla V(\tau) = \frac{\partial}{\partial \tau} \left( r_{\ell_{k}}^{\top}(\tau)r_{\ell_{k}}(\tau) \right)^{\frac{1}{2}}
= \frac{\left(r_{\ell_{k}}^{\top}(\tau) \frac{\partial}{\partial \tau} r_{\ell_{k}}(\tau) \right)}{|\tau|_{\ell_{k}}} 
$
where
each $j$-th entry of $\frac{\partial}{\partial \tau} r_{\ell_{k}}(\tau)$ is given by
$
\frac{\partial}{\partial \tau} r^{j}_{\ell_{k}}(\tau) = \frac{\partial}{\partial \tau}\left((\wt\tau_j\null\hspace{-.08cm}^{k} - \tau_{j}) - \frac{1}{N}\sum_{i=1}^N(\wt\tau_i\null\hspace{-.08cm}^k - \tau_i)^\top \right)
= \left[\frac{1}{N}, \frac{1}{N},\ldots, \frac{1}{N}, -1 + \frac{1}{N}, \frac{1}{N}, \ldots, \frac{1}{N} \right]
$
-- the term $-1 + \frac{1}{N}$ corresponds to the $j$-th element of the vector. 
It follows that
$\frac{\partial}{\partial \tau} r_{\ell_{k}}(\tau) = \frac{1}{N}{\bf \underline{1}} - {\bf I}$. Then,
for each $\tau \in C \setminus \X$, 
$
\langle \nabla V(\tau), f(\tau) \rangle = \left(\frac{r_{\ell_{k}}^{\top}(\tau)(\frac{1}{N}{\bf \underline{1} - \bf I})}{|\tau|_{\ell_{k}}}\right)f(\tau)
$.\stopmodif
}
Furthermore, the norm of the hybrid arc $c$ can be bounded by a constant $\bar{c}$ given by 
\begin{eqnarray}
\bar{c} = \left|\left(\frac{1}{N}\underline{\bf 1} - {\bf I}\right)\Delta\omega\right| \label{eqn:ctbound}.
\end{eqnarray}
Building from this example, the following result provides properties of the distance to $\wt\A$ from solutions $\tau$ 
to $\HS_N$ under generic perturbations on $f$ (not necessarily as in \eqref{eqn:fdelta}).

\begin{theorem}\label{thm:vanishingC}
Suppose that the perturbation on the flow map of $\HS_N$ is such that a perturbed solution $\tau$ satisfies, for each $j$ such that $\{t : (t,j) \in \dom \tau\}$ has more than one point, 
$\frac{d}{dt} |\tau(t,j)|_{\wt\A} = c(t,j)$ for all $t \in \{t : (t,j) \in \dom \tau\}$ \seannew{and $\tau(t,j) \in P_N \setminus \X$ for all $(t,j) \in \dom \tau$}, for some hybrid arc $c$ with $\dom c = \dom \tau$.
Then, the following hold:
\begin{itemize}
\item The asymptotic value of $|\tau(t,j)|_{\wt\A}$ satisfies 
\begin{align}
\lim_{t+j \to \infty}|\tau(t,j)|_{\wt\A} \leq \lim_{t+j \to \infty} \sum_{i = 0}^{j} (1+\e)^{j-i} \int_{t_i}^{t_{i+1}} c(t,j) dt
\end{align}
\item If there exists $\bar{c} > 0$ such that $|c(t,j)| \leq \bar{c}$ for each $(t,j) \in \dom \tau$ then 
\begin{align}
\lim_{t+j \to \infty}|\tau(t,j)|_{\wt\A} \leq \frac{\bar{c}\tb}{|\e|\omega}. \label{thmeqn:distupperbound}
\end{align}
\item If $\wt{j} : \reals_{\geq 0} \to \nats$ is a function that \seannew{chooses the appropriate minimum $j$ such that $(t,j) \in \dom \tau$} for each time $t$ and $t \mapsto c(t,\wt{j}(t))$ is absolutely integrable, i.e., $\exists B$ such that 
\IfConf{$\int_0^\infty |c(t,\wt{j}(t))| dt \leq B$,}{\begin{align} 
\int_0^\infty |c(t,\wt{j}(t))| dt \leq B, \label{eqn:cint}
\end{align}}
then
\begin{align}
\lim_{t + j \to \infty} |\tau(t,j)|_{\wt\A} \leq \frac{B}{\e}. \label{eqn:limboundbeps}
\end{align}
\end{itemize}
%
\end{theorem}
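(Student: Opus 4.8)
The plan is to reduce the statement to a scalar impulsive recursion for $V(\tau)=|\tau|_{\wt\A}$ along a perturbed solution and then solve that recursion explicitly. Two ingredients drive the reduction. First, during flows the hypothesis gives $\frac{d}{dt}V(\tau(t,j))=c(t,j)$, which is meaningful because $V=\min_k|\tau|_{\wt\ell_k}=|\tau|_{\wt\A}$ and $\tau$ is assumed to stay in $P_N\setminus\X$, where $V$ is locally the smooth distance to a single line $\wt\ell_k$. Second, because the perturbation acts only on the flow map, the jump map $G$ is unchanged, so the computation in the proof of Theorem~\ref{thm:stability} still yields $V(G(\tau))=(1+\e)V(\tau)$. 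Hence the distance to $\wt\A$ satisfies $\dot V=c$ between jumps and is multiplied by $(1+\e)$ at each jump.

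Next I would set up the bookkeeping. Let $0=t_0\le t_1\le\cdots$ be the jump times, let the $j$-th flow interval be $[t_j,t_{j+1}]\times\{j\}$, and write $I_i:=\int_{t_i}^{t_{i+1}}c(t,i)\,dt$ for the net change of $V$ over the $i$-th interval. Denoting by $a_j$ the distance just after the $j$-th jump and by $b_j:=a_j+I_j$ the distance just before the $(j+1)$-th jump, the jump identity gives $a_{j+1}=(1+\e)b_j$, so $b_j=(1+\e)b_{j-1}+I_j$. Solving this linear recursion yields
$$b_j=(1+\e)^j\,|\tau(0,0)|_{\wt\A}+\sum_{i=0}^{j}(1+\e)^{j-i}I_i.$$
Since $\e\in(-1,0)$ forces $|1+\e|<1$, the first term vanishes as $j\to\infty$, and because the within-interval values of $V$ are controlled by $a_j$ and $b_j$, passing to the limit $t+j\to\infty$ gives the first bullet (here I would flag the harmless index correction $c(t,j)\mapsto c(t,i)$ inside the integral).

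The last two bullets are specializations obtained by bounding $|I_i|$ and summing the geometric series $\sum_{m\ge 0}(1+\e)^m=1/|\e|$. For the second bullet, the inter-jump time is at most $\tb/\omega$ — each timer flows through $[0,\tb]$ at rate $\omega$, exactly as in the proof of Theorem~\ref{thm:timetoconverge} — so $|I_i|\le\bar c\,(t_{i+1}-t_i)\le\bar c\,\tb/\omega$ and therefore $\lim_j b_j\le(\bar c\,\tb/\omega)(1/|\e|)=\bar c\,\tb/(|\e|\omega)$. For the third bullet, absolute integrability gives $\sum_i|I_i|\le\int_0^\infty|c(t,\wt j(t))|\,dt\le B$, and the same geometric estimate produces the stated bound $B/|\e|$ (with $|\e|$ in the denominator).

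The main obstacle I anticipate lies in justifying the reduction rather than in the subsequent computation. One must argue that $V$ is differentiable along the perturbed flow with derivative exactly $c$, which needs $\tau$ to remain in $P_N\setminus\X$ so that the active line $\wt\ell_k$ is well defined, and that the contraction identity $V(G(\tau))=(1+\e)V(\tau)$ survives the perturbation. The latter is the delicate point: across a jump the nearest line switches from $\wt\ell_k$ to its shifted counterpart $\wt\ell_{k'}$, so one must invoke the order-preservation of Lemma~\ref{lem:ordering} together with the fact that the contraction factor computed in Theorem~\ref{thm:stability} depends only on the unperturbed jump map $G$ and is thus untouched by a flow-map perturbation. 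Once these structural facts are in place, the impulsive recursion and the geometric-series estimates are routine.
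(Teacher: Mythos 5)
Your proposal follows essentially the same route as the paper's proof: reduce to the scalar impulsive dynamics of $V(\tau)=|\tau|_{\wt\A}$ using the flow hypothesis $\dot V=c$ and the jump contraction $V(G(\tau))=(1+\e)V(\tau)$ inherited from Theorem~\ref{thm:stability}, solve the resulting linear recursion to get $(1+\e)^{j}|\tau(0,0)|_{\wt\A}+\sum_{i=0}^{j}(1+\e)^{j-i}\int_{t_i}^{t_{i+1}}c(s,i)\,ds$, and conclude each bullet via the geometric series $\sum_{m\ge 0}(1+\e)^{m}=1/|\e|$ and the bound $t_{i+1}-t_i\le\tb/\omega$. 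Your flagged corrections (the index $c(t,i)$ inside the integral and $B/|\e|$ rather than $B/\e$ in the third bullet) are consistent with what the paper's own proof actually establishes.
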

\begin{proof}
Consider a maximal solution $\tau$ to $\HS_N$ with initial condition $\tau(0,0) \in P_N \setminus \X$.
This proof uses the  
function $V$ from the proof of Theorem~\ref{thm:stability}. With $V$ equal to the distance from $\tau$ to the set $\wt\A$, then, for each $\tau \in D \setminus \X$, we have that 
$V(G(\tau)) - V(\tau) = \e V(\tau)$.
Using the fact that $V(\tau) = |\tau|_{\wt\A}$ and the fact that, $G$ along the solution is single valued, it follows that $|\tau|_{\wt\A}$ after a jump can be equivalently written as
\IfConf{$
|\tau(t_j,j+1)|_{\wt\A} = (1+\e)|\tau(t_j,j)|_{\wt\A}.
$}{$$
|\tau(t_j,j+1)|_{\wt\A} = (1+\e)|\tau(t_j,j)|_{\wt\A}.
$$}
By assumption, in between jumps, the distance to the set $\wt\A$ is such that
$\frac{d}{dt}|\tau(t,j)|_{\wt\A} = c(t,j)$,
which implies that at $t_{j+1}$ the distance to the desynchronization set is given by 
$$|\tau(t_{j+1},j)|_{\wt\A} = \int_{t_{j}}^{t_{j+1}} c(s,j) ds + |\tau(t_{j},j)|_{\wt\A}.$$
It follows that  
\IfConf{
\begin{eqnarray*}|\tau(t_{1},0)|_{\wt\A} &=& \int_{0}^{t_{1}} c(s,0) ds + |\tau(0,0)|_{\wt\A},\\
|\tau(t_{1},1)|_{\wt\A} &=& (1+\e)\left(\int_{0}^{t_{1}} c(s,0) ds + |\tau(0,0)|_{\wt\A}\right) \\ &=& (1+\e)\int_{0}^{t_{1}} c(s,0) ds + (1+\e)|\tau(0,0)|_{\wt\A}, \\
|\tau(t_{2},1)|_{\wt\A} &=& \int_{t_{1}}^{t_{2}} c(s,1) ds + (1+\e)\int_{0}^{t_{1}} c(s,0) ds \\ && \qquad \qquad \qquad \qquad \qquad  + (1+\e)|\tau(0,0)|_{\wt\A}, \\
|\tau(t_{2},2)|_{\wt\A} &=& (1+\e) \left(\int_{t_{1}}^{t_{2}} c(s,1) ds  \right. \\ &&  \left.+ (1+\e)\int_{0}^{t_{1}} c(s,0) ds + (1+\e)|\tau(0,0)|_{\wt\A} \right)
\end{eqnarray*}
}{\begin{align*}
|\tau(t_{1},0)|_{\wt\A} & = \int_{0}^{t_{1}} c(s,0) ds + |\tau(0,0)|_{\wt\A} \\
|\tau(t_{1},1)|_{\wt\A} & = (1+\e)\left(\int_{0}^{t_{1}} c(s,0) ds + |\tau(0,0)|_{\wt\A}\right) = (1+\e)\int_{0}^{t_{1}} c(s,0) ds + (1+\e)|\tau(0,0)|_{\wt\A} \\
|\tau(t_{2},1)|_{\wt\A} & =\int_{t_{1}}^{t_{2}} c(s,1) ds + (1+\e)\int_{0}^{t_{1}} c(s,0) ds + (1+\e)|\tau(0,0)|_{\wt\A} \\
|\tau(t_{2},2)|_{\wt\A} & =(1+\e) \left(\int_{t_{1}}^{t_{2}} c(s,1) ds + (1+\e)\int_{0}^{t_{1}} c(s,0) ds + (1+\e)|\tau(0,0)|_{\wt\A} \right). 
\end{align*}}
Then, proceeding in this way, we obtain
\begin{align*}
|\tau(t_{j},j)|_{\wt\A} &= (1+\e)^{j}|\tau(0,0)|_{\wt\A} \\ & \qquad  \qquad \qquad +\sum_{i = 0}^{j-1}(1+\e)^{j-i}\int_{t_{i}}^{t_{i+1}} c(s,i)ds. 
\end{align*}
For the case of generic $t_{j+1} \geq t \geq t_j$, we have that 
\begin{align*}
|\tau(t,j)|_{\wt\A} = (1+\e)^{j}|\tau(0,0)|_{\wt\A} + \sum_{i = 0}^{j}(1+\e)^{j-i}\int_{t_{i}}^t c(s,i)ds.
\end{align*}
Since, we know that as either $t$ or $j$ goes to infinity, $j$ or $t$ go to infinity as well, respectively. The expression reduces to
\IfConf{$\lim_{t + j \to \infty} |\tau(t,j)|_{\wt\A} = \lim_{j \to \infty} (1+\e)^{j}|\tau(0,0)|_{\wt\A} + \lim_{t + j \to \infty} \sum_{i = 0}^{j}(1+\e)^{j-i}\int_{t_{i}}^{t} c(s,i)ds 
= \lim_{t + j \to \infty} \sum_{i = 0}^{j}(1+\e)^{j-i}\int_{t_{i}}^{t} c(s,i)ds.$}{\begin{align}
\lim_{t + j \to \infty} |\tau(t,j)|_{\wt\A} &= \lim_{j \to \infty} (1+\e)^{j}|\tau(0,0)|_{\wt\A} + \lim_{t + j \to \infty} \sum_{i = 0}^{j}(1+\e)^{j-i}\int_{t_{i}}^{t} c(s,i)ds 
= \lim_{t + j \to \infty} \sum_{i = 0}^{j}(1+\e)^{j-i}\int_{t_{i}}^{t} c(s,i)ds. \label{eqn:limctj}
\end{align}}
If $c(t,j) \leq \bar{c}$, it follows that
\IfConf{$\lim_{t + j \to \infty} |\tau(t,j)|_{\wt\A} = \lim_{t + j \to \infty} \sum_{i = 0}^{j}(1+\e)^{j-i}\int_{t_{i}}^{t} c(s,i)ds \leq \frac{\bar{c}\tb}{|\e|\omega}$.}{
\begin{align*}
\lim_{t + j \to \infty} |\tau(t,j)|_{\wt\A} &= \lim_{t + j \to \infty} \sum_{i = 0}^{j}(1+\e)^{j-i}\int_{t_{i}}^{t} c(s,i)ds \NotForConf{ \\
&\leq \lim_{t + j \to \infty} \sum_{i = 0}^{j}(1+\e)^{j-i}\int_{t_{i}}^{t_{i+1}} \bar{c} dt \\
&\leq \frac{c\tb}{\omega} \lim_{t + j \to \infty} \sum_{i = 0}^{j}(1+\e)^{j-i} \\
& = \frac{\bar{c}\tb}{\omega} \lim_{t + j \to \infty} \frac{(1+\e)^{j} - 1}{(1+\e) - 1} \\
&} \leq \frac{\bar{c}\tb}{|\e|\omega}.
\end{align*}}

Lastly, since this hybrid system has the property that for any maximal solution $\tau$ with  $(t,j) \in \dom \tau$, if $t$ approaches $\infty$  then the parameter $j$ also approaches $\infty$, the expression given by $\lim_{t + j \to \infty} |\tau(t,j)|_{\wt\A}$ can be simplified. To do this, we know that the series
$\sum_{i=0}^j (1+\e)^{j -i} = \frac{(1+\e)^{j+1}-1}{\e}$
approaches $\frac{1}{|\e|}$ as $j \to \infty$. Since $1+\e >0$ for $\e \in (-1,0)$, the series is absolutely convergent and its partial sum 
$s_j = \sum_{i=0}^j (1+\e)^{j -i}$
is such that $\{s_j\}^\infty_{j=m}$ is a nondecreasing sequence (for each $m$). This implies that 
$s_j \leq 1/|\e|$ for all $j$ and for each $m$.
Then, it follows that
$(1+\e)^{j-i} \leq \frac{1}{|\e|}$ for every $j,i \in \nats$.
Since the expression is a function of $j$ only and, for complete solutions, $t$ is such that as $t \to \infty$, then $j \to \infty$, we obtain
\IfConf{$\lim_{t+j\to \infty} \sum_{i=0}^j (1+\e)^{j-i} \int_{t_i}^{t} c(s,i) ds= \lim_{j\to \infty} \sum_{i=0}^j (1+\e)^{j-i} \int_{t_i}^{t} c(s,i) ds  \leq  \frac{1}{|\e|}  \int_{0}^{\infty} |c(s,j(s))| ds. $
}{\begin{align*}
\lim_{t+j\to \infty} \sum_{i=0}^j (1+\e)^{j-i} \int_{t_i}^{t} c(s,i) ds&= \lim_{j\to \infty} \sum_{i=0}^j (1+\e)^{j-i} \int_{t_i}^{t} c(s,i) ds  \\ 
&\leq  \lim_{j\to \infty} \sum_{i=0}^j (1+\e)^{j-i} \int_{t_i}^{t} |c(s,i)| ds \\ 
&\leq \left( \sum_{i=0}^\infty (1+\e)^{j-i} \right)\int_{0}^{\infty} |c(s,i)| ds \\ 
&\leq  \frac{1}{|\e|}  \int_{0}^{\infty} |c(s,\wt{j}(s))| ds.
\end{align*}}
\end{proof}


\vspace{-.2cm}
\section{Numerical Analysis}\label{sec:numerics}
This section presents numerical results obtained from simulating $\HS_N$. First, we present results for the nominal case of $\HS_N$ given by \eqref{eqn:HS}. Then, we present results for $\HS_N$ under different types of perturbations. The Hybrid Equations (HyEQ) Toolbox in \cite{Sanfelice.ea.13.HSCC} was used to compute the trajectories.
\subsection{Nominal Case}
The possible solutions to the hybrid system $\HS_N$ fall into four categories: always desynchronized, asymptotically desynchronized, never desynchronized, and initially synchronized. \IfConf{{Due to space constraints, in this article we present numerical results for the case of asymptotically desynchronized solutions. For more information regarding each case, see \cite{Phillips2013TechReport}.}}{The following simulation results show the evolution of solutions for each category.} The parameters used in these simulations are $\tb = 1$ and $\varepsilon = -0.2$.
\NotForConf{
\subsubsection{Always desynchronized ($N \in \{2,3\}$)} 
\begin{figure}[]
\centering
\subfigure[Solutions to $\HS_{2}$ with $\tau(0,0) \in \A$]{
\psfragfig*[width = .4\textwidth,trim = 15mm 5mm 15mm 10mm, clip]{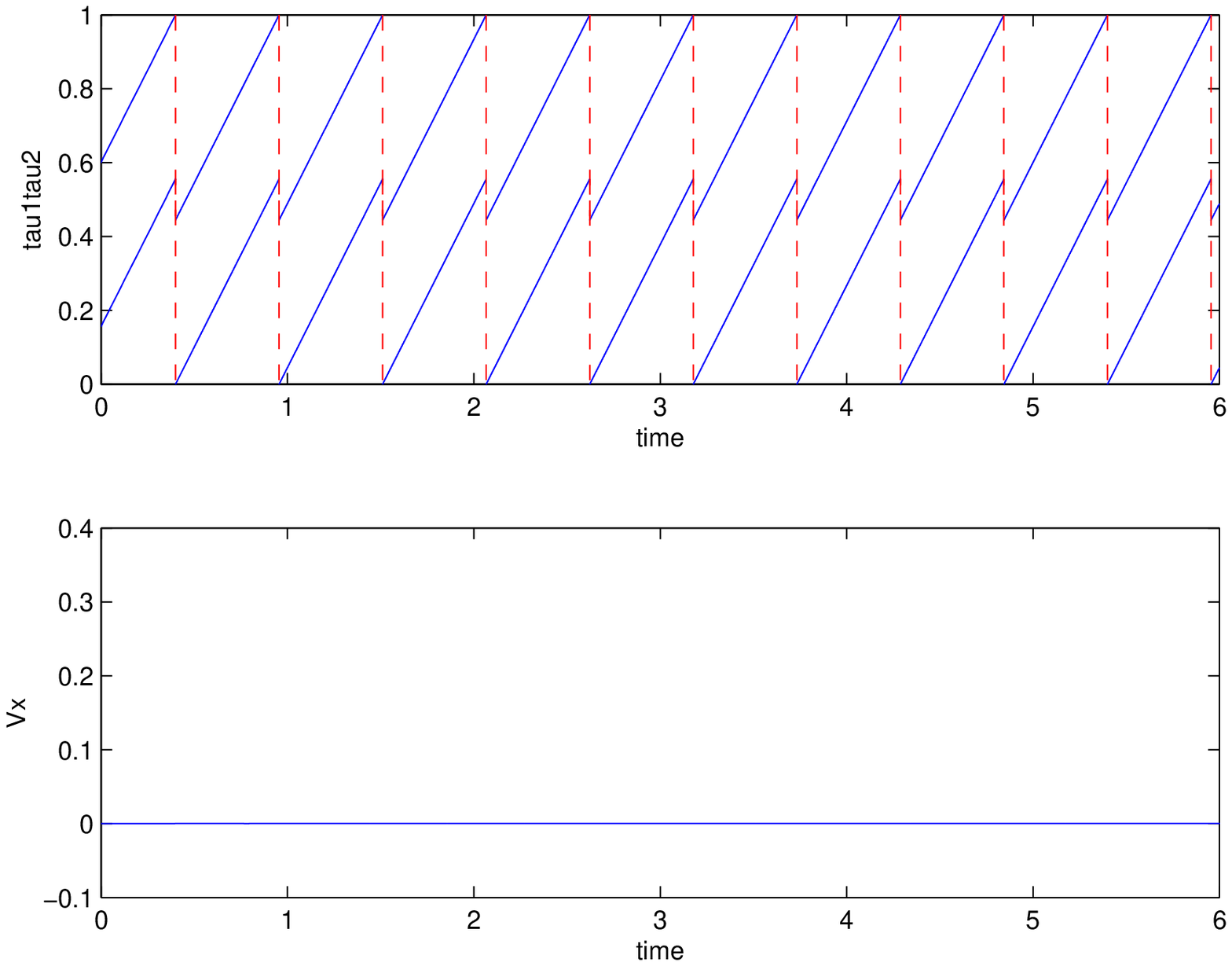}{
\psfrag{tau1tau2}[][][.7]{$\ton,\ttw$}
\psfrag{time}[][][.7]{$t$ [seconds]}
\psfrag{flows [t]}[][][.8]{}
\psfrag{Vx}[][][.7]{$V(\tau)$}
}}
\ \subfigure[Solutions to $\HS_{3}$ with $\tau(0,0) \in \A_3$]{
\psfragfig*[width = .4\textwidth,trim = 15mm 5mm 15mm 10mm, clip]{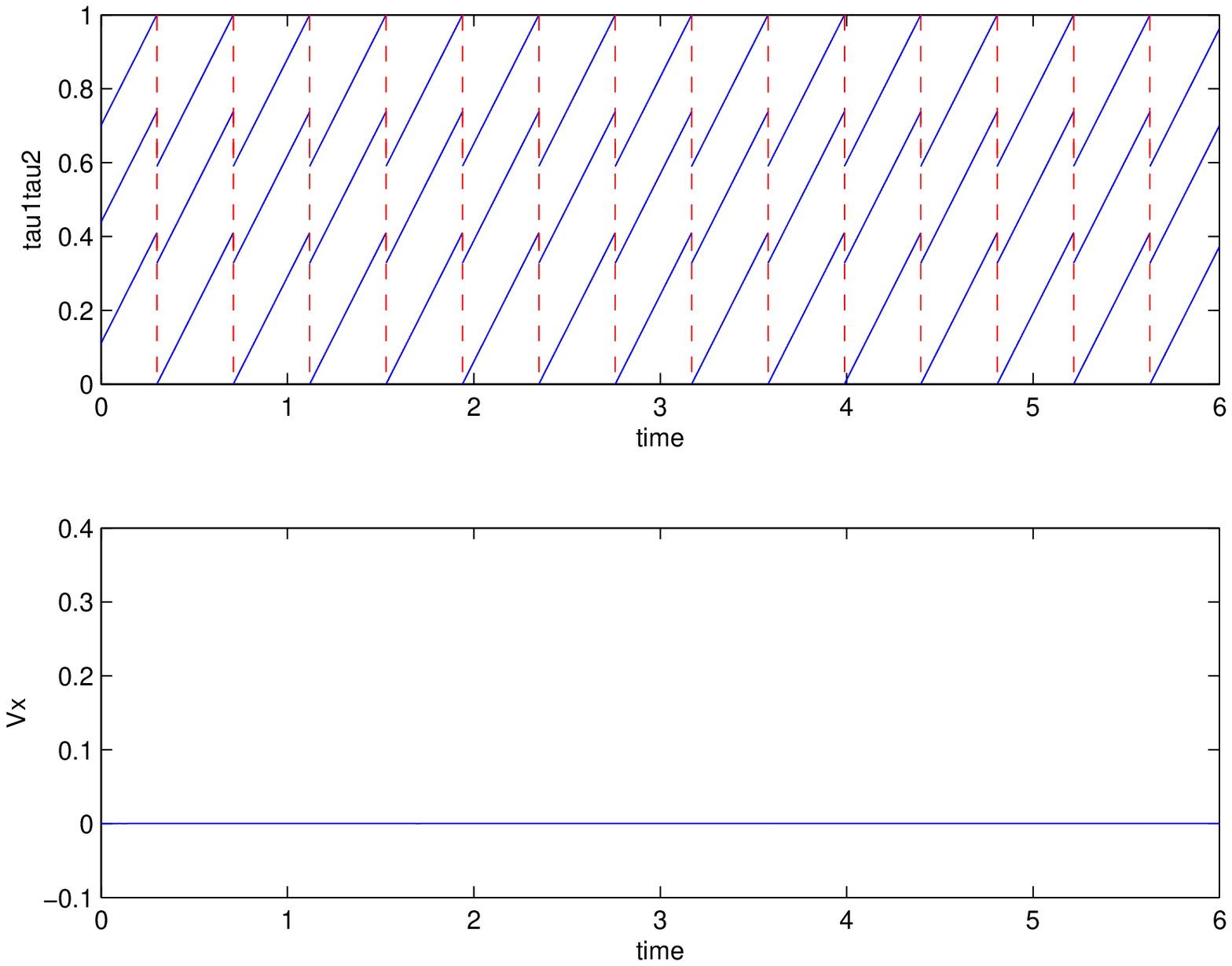}{
\psfrag{time}[][][.7]{$t$ [seconds]}
\psfrag{flows [t]}[][][.8]{}
\psfrag{Vx}[][][.7]{$V(\tau)$}
\psfrag{tau1tau2}[][][.7]{$\ton, \ttw, \tau_3$}
}}
\caption{Solutions to $\HS_N$ with $N \in \{2,3\}$ that are initially in the set $\A$.}
\label{fig:desynchsim}
\vspace{-.2cm}
\end{figure}
A solution to $\HS_N$ that has initial condition $\tau(0,0) \in \A$ stays desynchronized. Figure~\ref{fig:desynchsim} shows the evolution of such a solution for systems $\HS_{2}$ and $\HS_{3}$. Furthermore, as also shown in the figures, for these same solutions, the Lyapunov function is initially zero and stays equal to zero as hybrid time goes on.

}

\NotForConf{\subsubsection{Asymptotically desynchronized ($N \in \{2,3,7,10\}$)}} A solution of $\HS_N$ that starts in $P_N \setminus (\X \cup \A)$ asymptotically converges to $\A$, as Theorem~\ref{thm:timetoconverge} indicates. \IfConf{Figure~\ref{fig:H2notinX2} and Figure~\ref{fig:H3notinX3}}{Figure~\ref{fig:asympdesynch}} show solutions to both $\HS_{2}$ and $\HS_{3}$ converging to their respective desynchronization sets.

\IfConf{
\begin{figure}[]
\centering
\setlength{\unitlength}{0.019cm}
\psfrag{tau1tau2}[][][.6]{$\ton,\ttw$}
\psfrag{time}[][][.6]{$t$ [sec]}
\psfrag{flows [t]}[][][.8]{}
\subfigure[Solutions to $\HS_{2}$ with $c_{2} = 0.24$ and $\tau(0,0) = {[0,0.1]^{\top}} \in P_2 \setminus \X_2$.]{
\label{fig:H2notinX2}
\psfrag{Vx}[][][.6]{\hspace{-.5cm}$V(\tau)$}
\includegraphics[width = .23\textwidth,trim = 15mm 5mm 15mm 10mm, clip]{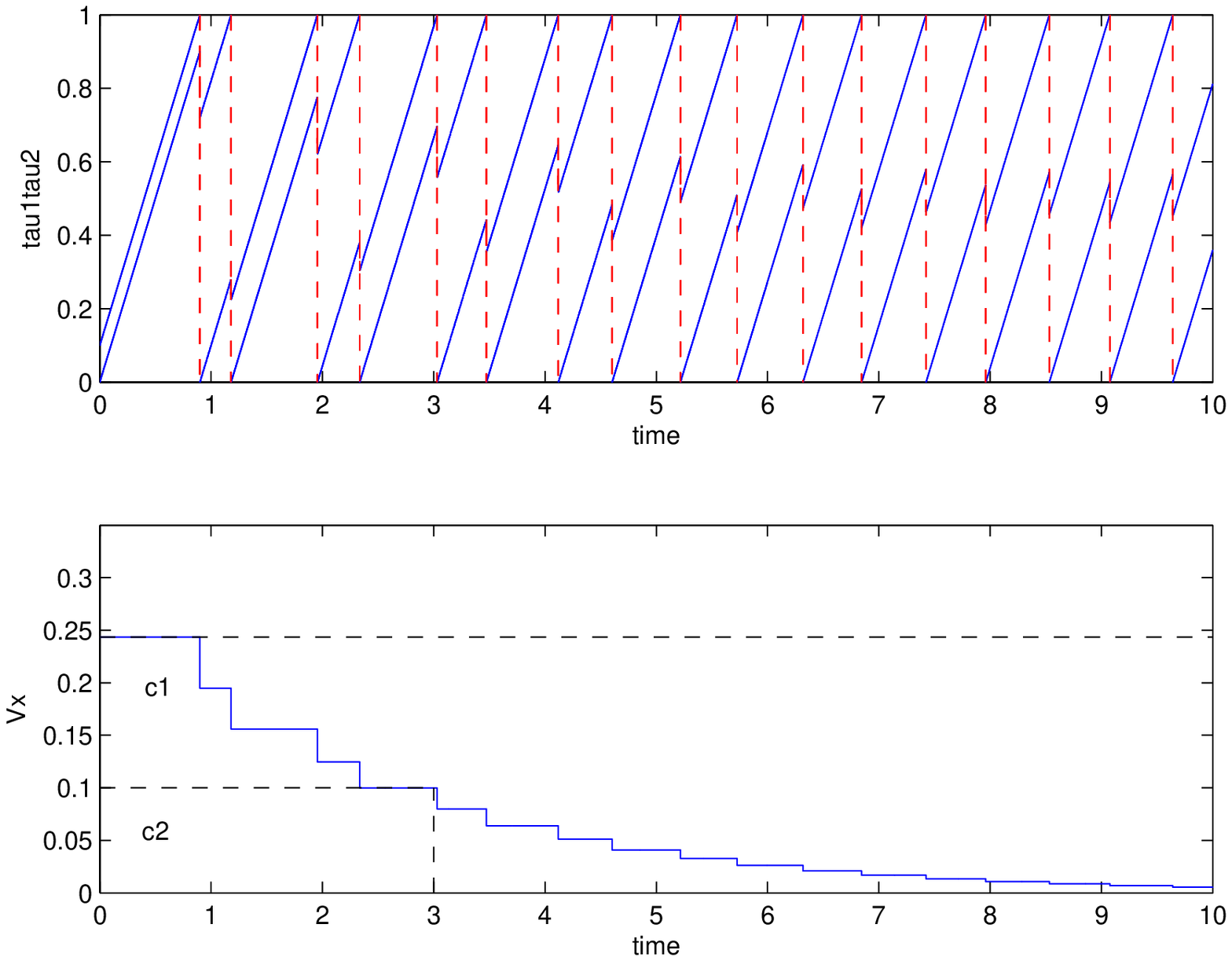}
\put(-195,37){{\scalebox{.6}{$c_{1}$}}}
\put(-195,63){{\scalebox{.6}{$c_{2}$}}}
}
\ \subfigure[Solutions to $\HS_{3}$ with $c_{2} = 0.32$ and $\tau(0,0) = {[0, 0.1,0.2]}^{\top} \in P_3 \setminus \X_3$.]{
\label{fig:H3notinX3}
\psfrag{Vx}[][][.6]{\hspace{-.5cm}$V(\tau)$}
\psfrag{tau1tau2}[][][.6]{$\ton,\ttw,\tau_3$}
\includegraphics[width = .23\textwidth,trim = 15mm 5mm 15mm 10mm, clip]{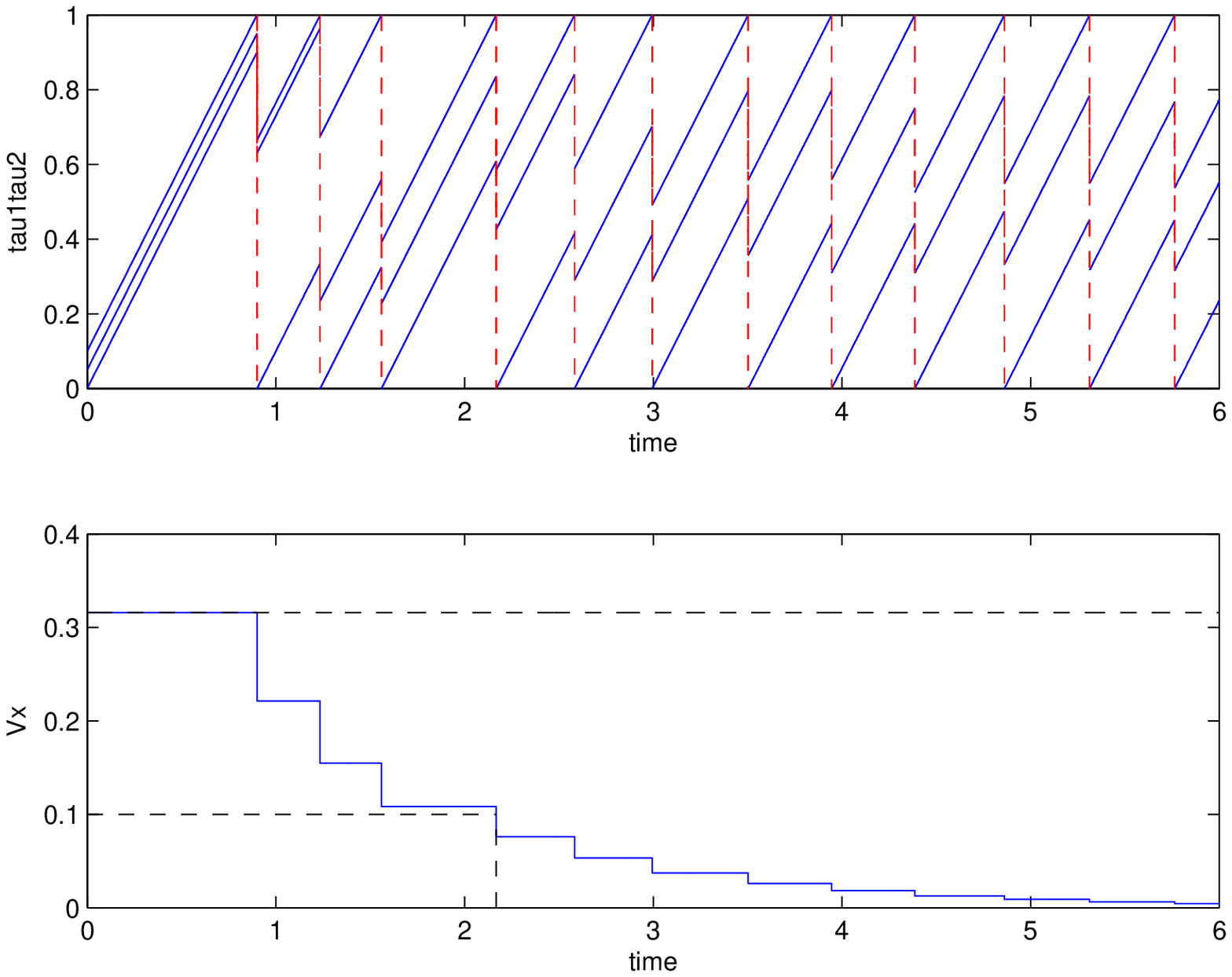}
\put(-195,35){{\scalebox{.6}{$c_{1}$}}}
\put(-195,70){{\scalebox{.6}{$c_{2}$}}}
}\\
\psfrag{tau1tau2}[][][.6]{$\tau_1,\tau_{2}$}
\psfrag{time}[][][.6]{$t$ [sec]}
\psfrag{flows [t]}[][][.8]{}
\subfigure[Solutions to $\HS_{7}$.]{
\psfrag{Vx}[][][.6]{}
\label{fig:H7notinX7}\includegraphics[width = .23\textwidth,trim = 15mm 80mm 15mm 10mm, clip]{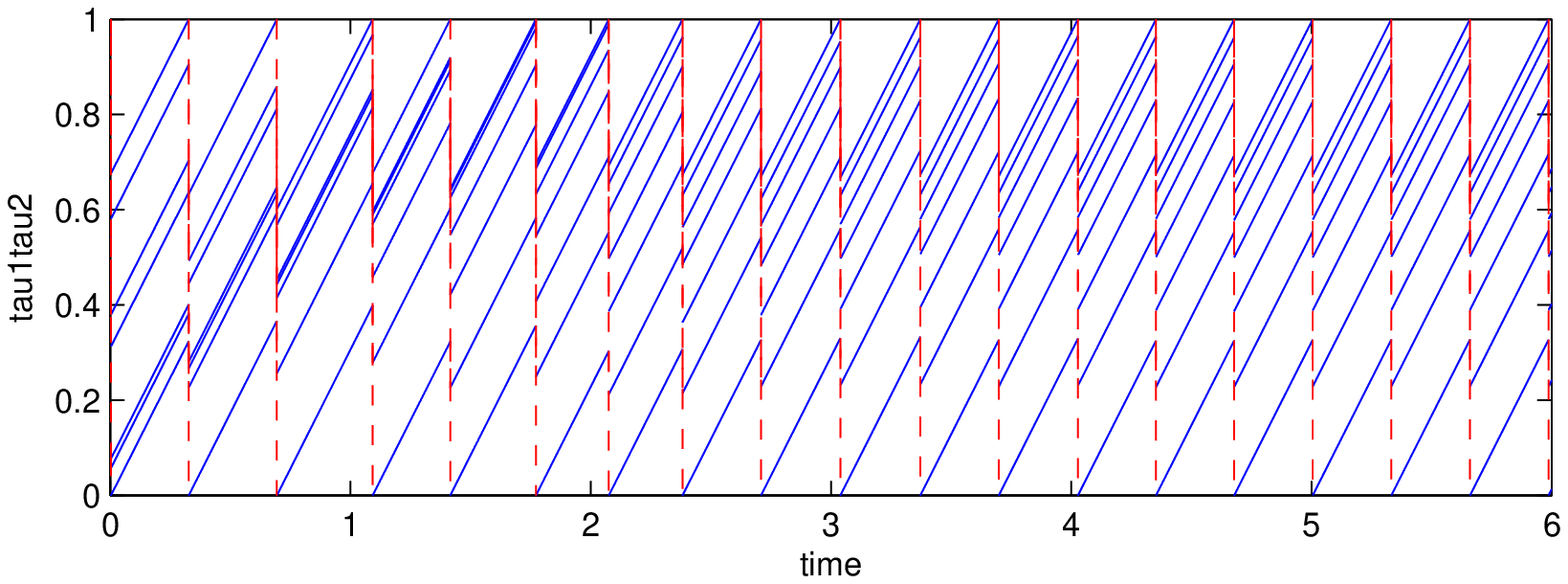}}
\subfigure[Solutions to $\HS_{10}$.]{\label{fig:H10notinX10}
\psfrag{Vx}[][][.6]{}
\psfrag{tau1tau2}[][][.6]{$\tau_1,\tau_{2}$}
\includegraphics[width = .23\textwidth,trim = 15mm 80mm 15mm 10mm, clip]{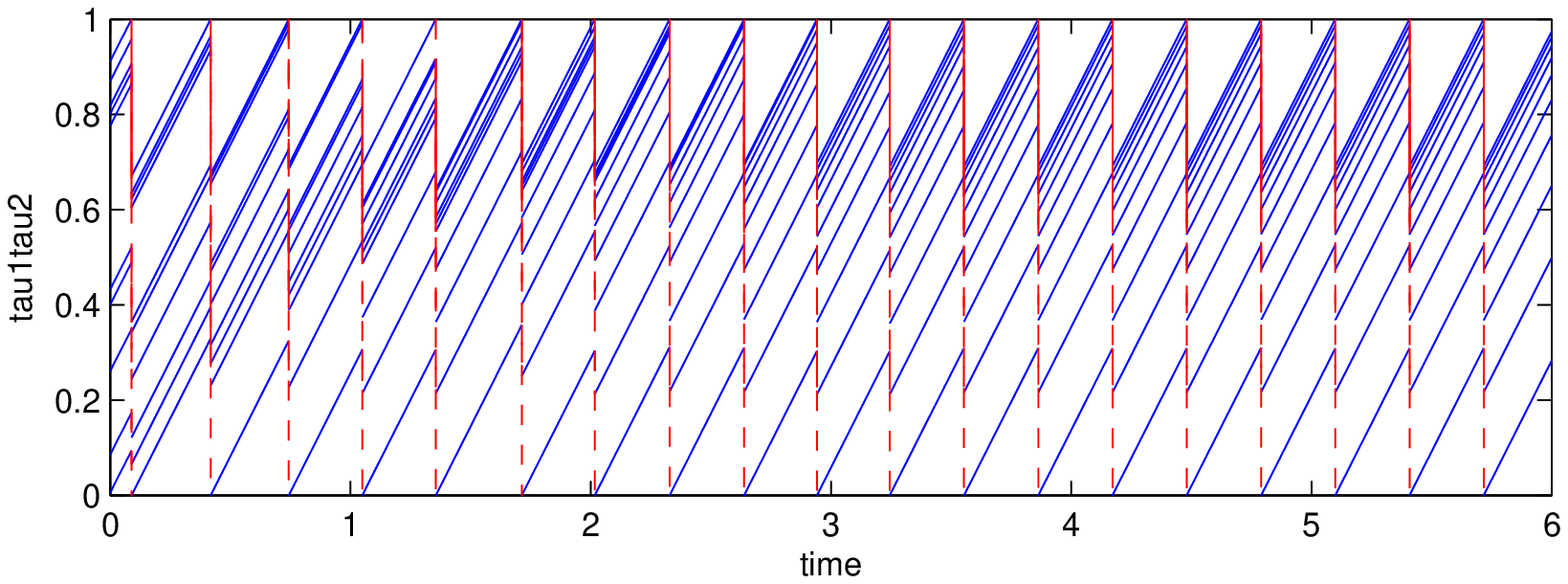}}
\caption{Solutions to $\HS_N$ that asymptotically converge to the set $\A$ for $N \in \{2,3,7,10\}$.}
\label{fig:asympdesynch}
\end{figure}}
{ 
\begin{figure}[]
\centering
\subfigure[Solutions to $\HS_{2}$ with $c_{2} = 0.24$ and $\tau(0,0) = {[0,0.1]^{\top}} \in P_2 \setminus \X_2$.]{
\label{fig:H2notinX2}
\psfragfig*[width = .4\textwidth,trim = 15mm 5mm 15mm 10mm, clip]{Figures/H2notinX2}{
\psfrag{Vx}[][][.7]{\hspace{-.5cm}$V(\tau)$}
\psfrag{tau1tau2}[][][.7]{$\ton,\ttw$}
\psfrag{time}[][][.7]{$t$ [seconds]}
\psfrag{flows [t]}[][][.8]{}
\psfrag{c1}[][][.7]{}
\psfrag{c2}[][][.7]{}
}
\put(-170,22){{\scalebox{.7}{$c_{1}$}}}
\put(-170,55){{\scalebox{.7}{$c_{2}$}}}
}
\ \subfigure[Solutions to $\HS_{3}$ with $c_{2} = 0.32$ and $\tau(0,0) = {[0, 0.1,0.2]}^{\top} \in P_3 \setminus \X_3$.]{
\label{fig:H3notinX3}
\psfrag{tau1tau2}[][][.7]{$\ton,\ttw,\tau_3$}
\psfragfig*[width = .4\textwidth,trim = 15mm 5mm 15mm 10mm, clip]{Figures/H3notinA3}{
\psfrag{Vx}[][][.7]{\hspace{-.5cm}$V(\tau)$}
\psfrag{tau1tau2}[][][.7]{$\ton,\ttw$}
\psfrag{time}[][][.7]{$t$ [seconds]}
\psfrag{flows [t]}[][][.8]{}
\psfrag{c1}[][][.7]{$$}
\psfrag{c2}[][][.7]{$$}
}
\put(-170,22){{\scalebox{.7}{$c_{1}$}}}
\put(-170,60){{\scalebox{.7}{$c_{2}$}}}
}\\
\psfrag{tau1tau2}[][][.7]{$\tau_1,\tau_{2}$}
\psfrag{time}[][][.7]{$t$ [seconds]}
\psfrag{flows [t]}[][][.8]{}
\subfigure[Solutions to $\HS_{7}$.]{
\label{fig:H7notinX7}
\psfragfig*[width = .4\textwidth,trim = 15mm 80mm 15mm 10mm, clip]{Figures/H7notinX7}{
\psfrag{Vx}[][][.7]{}
\psfrag{tau1tau2}[][][.7]{$\tau_1,\tau_{2}$}
\psfrag{time}[][][.7]{$t$ [seconds]}
\psfrag{flows [t]}[][][.8]{}
}}
\subfigure[Solutions to $\HS_{10}$.]{\label{fig:H10notinX10}
\psfragfig*[width = .4\textwidth,trim = 15mm 80mm 15mm 10mm, clip]{Figures/H10notinX10}}{
\psfrag{tau1tau2}[][][.7]{$\tau_1,\tau_{2}$}
\psfrag{time}[][][.7]{$t$ [seconds]}
\psfrag{flows [t]}[][][.8]{}
\psfrag{Vx}[][][.7]{}}
\caption{Solutions to $\HS_N$ that asymptotically converge to the set $\A$ for $N \in \{2,3,7,10\}$.}
\label{fig:asympdesynch}
\end{figure}
}

For $\HS_2$, if $\tau(0,0) = [0,0.1]^\top$, then the initial sublevel set is $\wt L_{V}(c_{2})$ with $c_{2} = 0.24$. Using Theorem~\ref{thm:timetoconverge}, the time to converge to the sublevel set $\wt L_{V}(c_{1})$ with $c_{1} = 0.1$ leads to $M = 7.84$. Figure~\ref{fig:H2notinX2} shows a solution to the system for 10 seconds of flow time. From the figure, it can be seen that $V(\tau(t,j)) \approx 0.1$ at $(t,j) = (3,4)$. Then, the property guaranteed by Theorem~\ref{thm:timetoconverge}, namely, $V(\tau(t,j)) \leq c_{1}$ for each $(t,j)$ such that $t + j \geq M$, is satisfied. 
Figure~\ref{fig:H3notinX3}, shows a solution and the distance of this solution to $\A$. Notice that the initial sub level set is $\wt{L}_{V}(c_{2})$ with $c_{2} =0.32$. From Theorem~\ref{thm:timetoconverge} it follows that the time to converge to $\wt{L}_{V}(c_{1})$ with $c_{1} = 0.1$ is given by $M =10.14$, which is actually already satisfied at $(t,j) = (2.2,4)$.
\IfConf{Figure~\ref{fig:H7notinX7} and Figure~\ref{fig:H10notinX10}}{Figure~\ref{fig:asympdesynch}} show solutions to $\HS_N$ that asymptotically desynchronize
for $N \in \{7,10\}$.

\NotForConf{ 
\subsubsection{Always Synchronized} When the impulse-coupled oscillators start from an initial condition $\tau(0,0) \in \X$, a solution remains in $\X$.
Figure~\ref{fig:neverdesynch} shows solutions to $\HS_{2}$ and $\HS_{3}$ that never desynchronize.
\begin{figure}
\centering
\subfigure[Solutions to $\HS_{2}$ with $\tau(0,0) \in \X_{2}$.]{
\label{fig:H2inX2}\psfragfig*[width = .4\textwidth,trim = 15mm 5mm 15mm 10mm, clip]{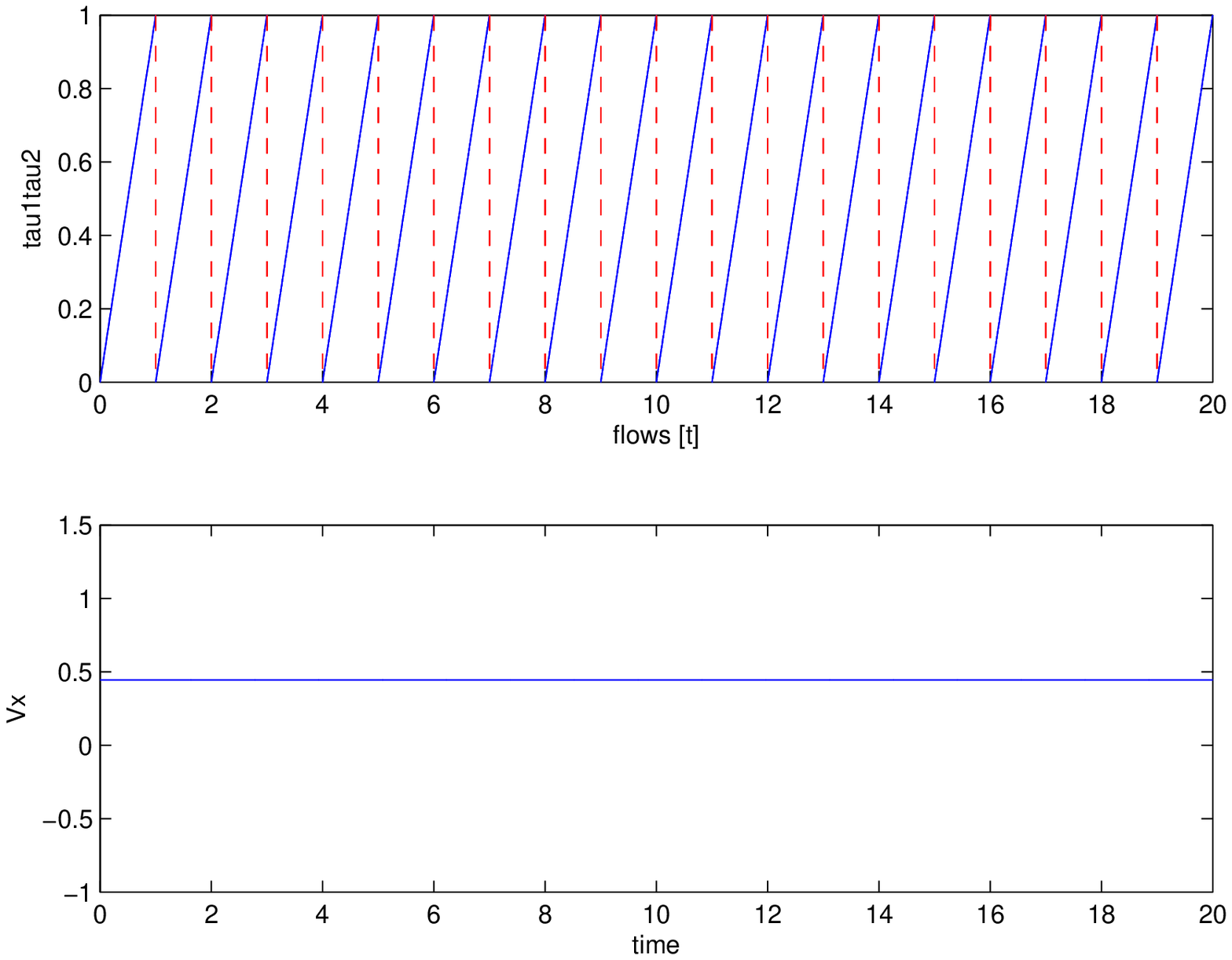}{
\psfrag{Vx}[][][.7]{$V(\tau)$}
\psfrag{tau1tau2}[][][.7]{$\ton,\ttw$}
\psfrag{time}[][][.7]{$t$ [seconds]}
\psfrag{flows [t]}[][][.8]{}
}}
\subfigure[Solutions to $\HS_{3}$ with $\tau(0,0) \in \X_{3}$.]{
\psfragfig*[width = .4\textwidth,trim = 15mm 5mm 15mm 10mm, clip]{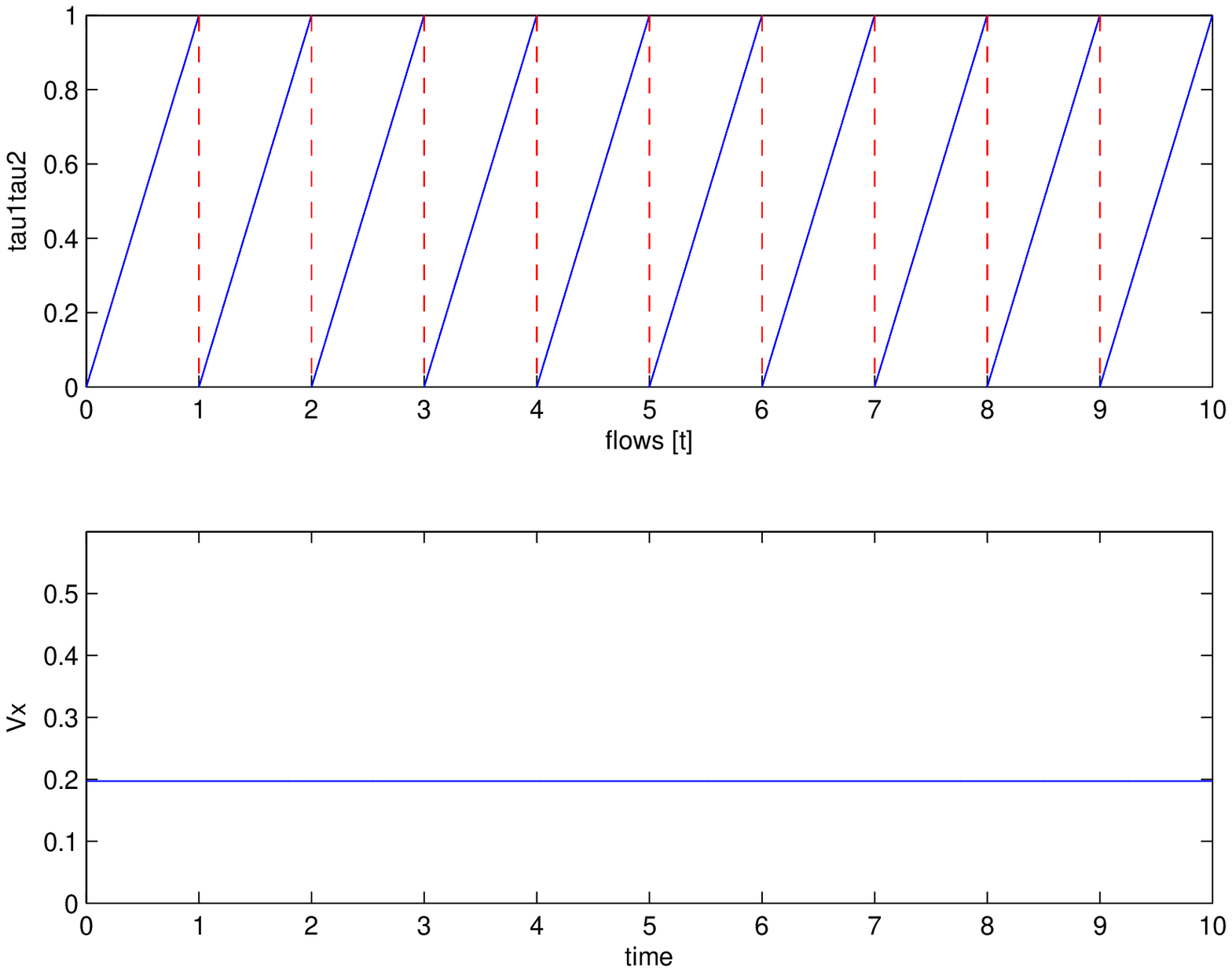}{\psfrag{Vx}[][][.7]{$V(\tau)$}
\psfrag{tau1tau2}[][][.7]{$\ton,\ttw,\tau_3$}
\psfrag{time}[][][.7]{$t$ [seconds]}
\psfrag{flows [t]}[][][.8]{}
}}
\caption{Solutions to $\HS_{N}$ that never converge to the set $\A$ for $N = \{2,3\}$.}
\label{fig:neverdesynch}
\end{figure}

It can be seen that (since $\tau(t,j) \in \X$ for all $(t,j) \in \dom \tau$) $V$ remains constant.}

\NotForConf{ 
\subsubsection{Initially Synchronized}
As mentioned in the proof of Theorem~\ref{thm:stability}, there exist solutions that are initialized in $\X$ and eventually become desynchronized. This is due to the set-valuedness of the jump map at such points. Figure~\ref{fig:initSyncSol} shows two different solutions to $\HS_2$ and $\HS_3$ from the same initial conditions $\tau(0,0) = [0,0,0]^\top$. Furthermore, notice that, for each $(t,j)$, the that Lyapunov function along solutions does not decrease to zero until all states are non-equal. Recall that from the analysis in Section~\ref{sec:lyapunov}, when states are equal, the issued solutions are outside of the basin of attraction. 
\begin{figure}
\centering
\subfigure[Solutions to $\HS_{2}$ with $\tau(0,0) \in \X_{2}$. Notice that the solution jumps out of $\X_2$ at $(t,j) = (3,3)$ and the function $V$ begins to decrease after that jump.]{
\psfragfig*[width = .4\textwidth,trim = 15mm 5mm 15mm 10mm, clip]{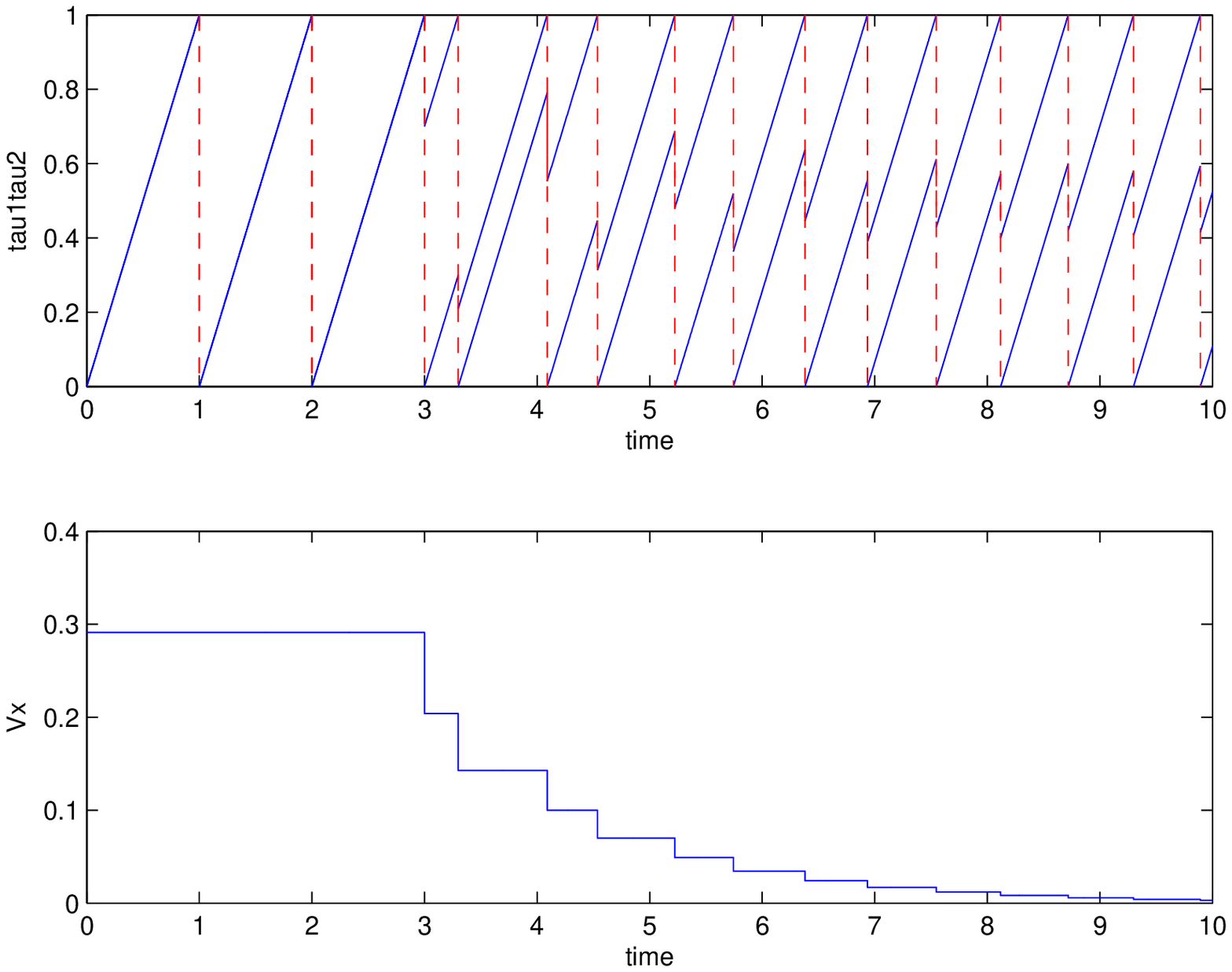}{
\psfrag{Vx}[][][.7]{$V(\tau)$}
\psfrag{tau1tau2}[][][.7]{$\ton,\ttw$}
\psfrag{time}[][][.7]{$t$ [seconds]}
\psfrag{flows [t]}[][][.8]{}
}}
\hspace{0.2cm}
\subfigure[Solutions to $\HS_{3}$ with $\tau(0,0) \in \X_{3}$. At hybrid time $(t,j) = (1,0)$ the timer state $\tau_1$ jumps away from the other two and begin to desynchronize. At approximately $(t,j) = (4.5,8)$, all of the states are not equal and $V$ begins to decrease.]{
\psfragfig*[width = .4\textwidth,trim = 15mm 5mm 15mm 10mm, clip]{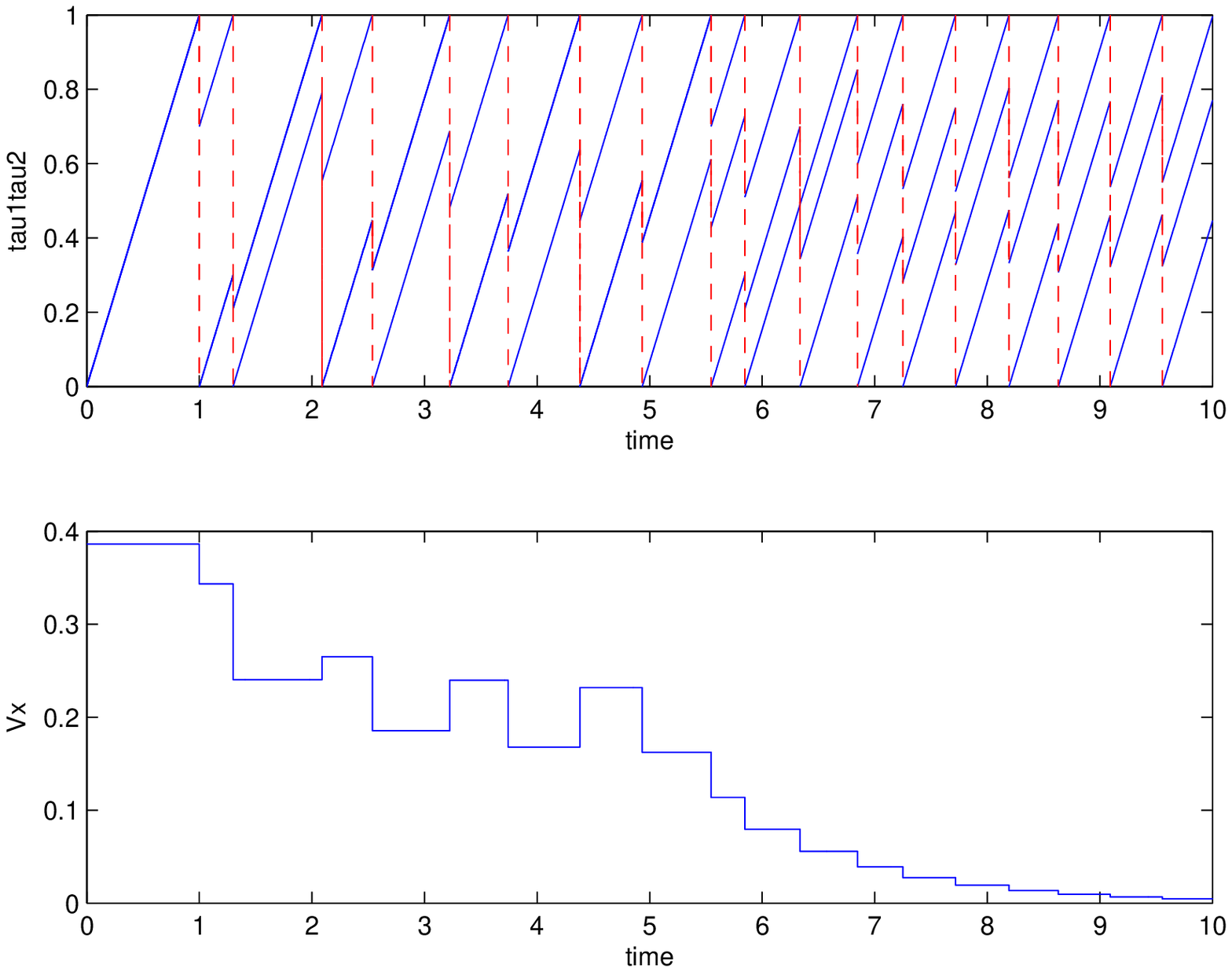}{
\psfrag{Vx}[][][.7]{$V(\tau)$}
\psfrag{tau1tau2}[][][.7]{$\ton,\ttw,\tau_3$}
\psfrag{time}[][][.7]{$t$ [seconds]}
\psfrag{flows [t]}[][][.8]{}}
}
\caption{Solutions to $\HS_{N}$ for $N \in \{2,3\}$ that initially evolve in $\X$ and eventually become desynchronized due to the set-valuedness of the jump map.}
\label{fig:initSyncSol}
\end{figure}
}

\subsection{Perturbed Case}
In this section, we present numerical results to validate the statements in Section~\ref{sec:robustness}.
\subsubsection{Simulations of $\HS_N$ with perturbed jumps}\label{sec:jumpperturbs1}
\IfConf{}{In this section, we consider a class of perturbations on the jump map and jump set. }
\NotForConf{

$\bullet$ {\bf Perturbation of the threshold in the jump set:} We replace the jump set $D$ by  $D_{\rho} := \{\tau : \exists i \in I \ s.t. \ \tau_i = \tb + \rho_i\}$ where $\rho_i \in [0, \bar{\rho}_i]$, $\bar{\rho}_i > 0$ for each $i \in I$. To avoid maximal solutions that are not complete, the flow set $C$ is replaced by $C_\rho := [0,\tb+\rho_1]\times[0,\tb+\rho_2] \times \ldots \times [0,\tb+\rho_N]$. Furthermore, the components of the jump map are also replaced by 
\begin{equation}
g_{\rho_i} (\tau) = \left\{ \begin{array}{ll} 0 \qquad \qquad \ \ \ \ &\mbox{if } \tau_{i} = \tb +\rho_i, \tau_r < \tb +\rho_j \ \ \forall j \in I\setminus \{i\} \\ 
\{0 , \tau_{i}(1+\varepsilon) \} \ &\mbox{if } \tau_{i} = \tb+\rho_i \ \exists j \in I \setminus \{i\} \  \mbox{s.t.} \  \tau_r = \tb+\rho_j \\ 
(1+\varepsilon)\tau_{i} \qquad \  &\mbox{if } \tau_i < \tb +\rho_i \ \exists j \in I \setminus \{i\} \  \mbox{s.t.} \  \tau_r = \tb+\rho_j\end{array} \right. \label{eqn:gi_perturbed} .
\end{equation}
}
\IfConf{ 
\begin{figure}
\centering
\setlength{\unitlength}{0.0165cm}
\subfigure[Initial condition $\tau(0,0) = {[0.1,0.2]^\top}$.]{
\label{fig:ResetBumpPertub_di_equal_t1t2plot_di=[0.1,0.1]}
\includegraphics[width = .2\textwidth,trim = 25mm 0mm 20mm 10mm, clip]{Figures/ResetBumpPertub_di_equal_t1t2plot_di=[0.1,0.1].eps}
\put(-120,4){\scalebox{.7}{ $\tau_1$}}
\put(-240,105){\scalebox{.7}{ $\tau_2$}}
}\hspace{.2cm}
\subfigure[Distance to the set $\wt\A$ for 10 solutions to $\HS_2$ with initial conditions randomly chosen from $C$.]{
\label{ResetBumpPertub_di_equal_timeplot_di=[0.1,0.1]}
\includegraphics[width = .22\textwidth,trim = 20mm 00mm 15mm 6mm, clip]{Figures/ResetBumpPertub_di_equal_timeplot_di=[0.1,0.1].eps}
\put(-146,0){\scalebox{.7}{$t$ [seconds]}}
\put(-265,70){\scalebox{.7}{$|\tau|_{\wt\A}$}}
}
\caption{Solutions to the hybrid system with perturbed jump conditions. 
Figures (a) and (b) have the perturbation given in Section~\ref{sec:jumpperturbs1} with $\wt\rho_1 = \wt\rho_2 = 0.1$.}
\label{figs:di_equal_di=0.2}
\end{figure}}
{ 
\begin{figure}
\centering
\setlength{\unitlength}{0.023cm}
\subfigure[Solution to $\HS_2$ on the $(\tau_1,\tau_2)$-plane with initial condition $\tau(0,0) = {[1.6,2.1]^\top}$.]{\label{figs:di_equal_di=0.2:tau1tau2}
\psfragfig*[width = .4\textwidth]{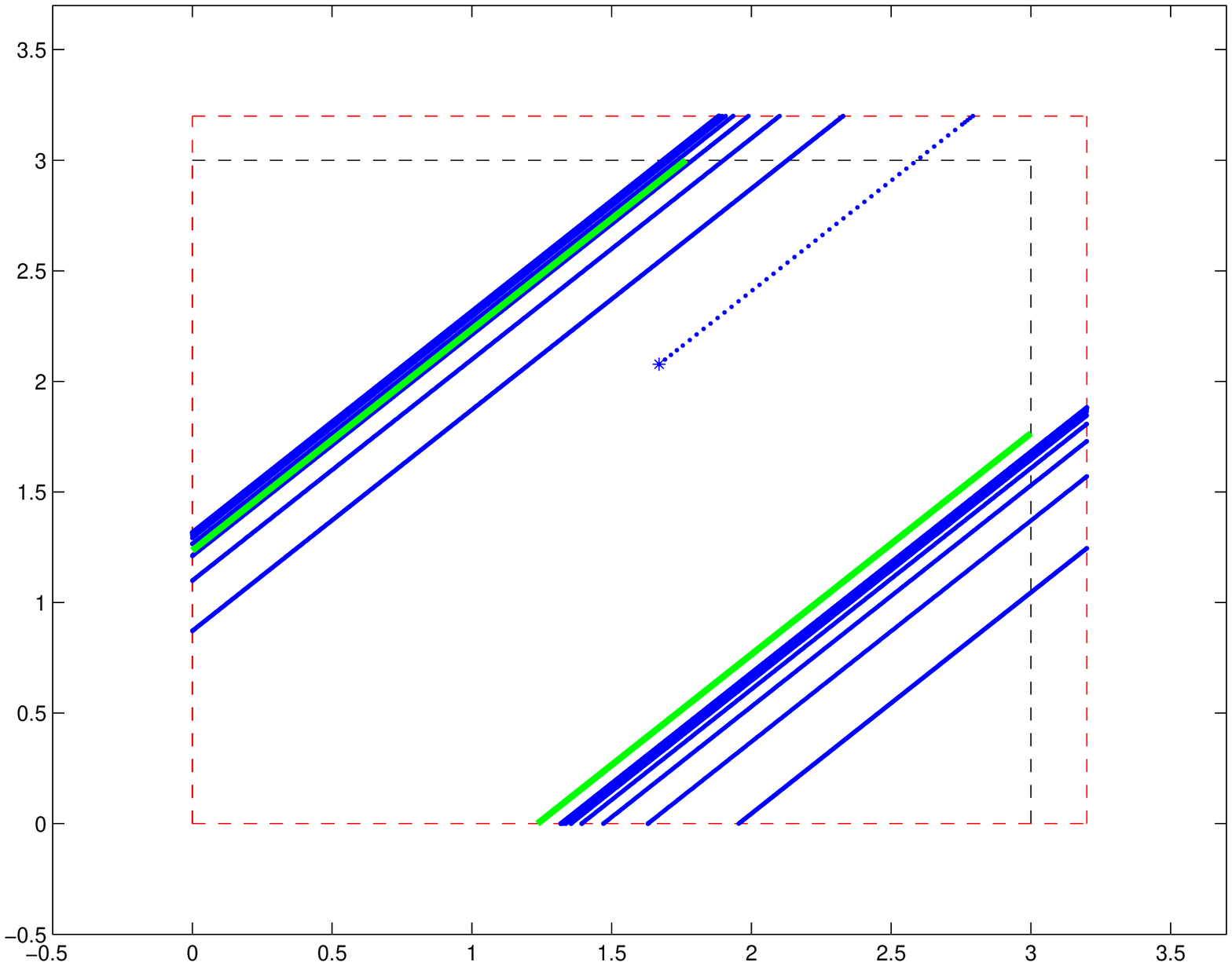}
\put(-140,6){\scalebox{.7}{ $\tau_1$}}
\put(-270,105){\scalebox{.7}{ $\tau_2$}}
}\hspace{.2cm}
\subfigure[Distance to the set $\wt\A$ for 10 solutions with initial conditions randomly chosen from ${[0,\tb + \rho_1] \times [0,\tb + \rho_2]}$. The solutions have a distance that converges to a steady state value of approximately $0.08$ at approximately $28$ seconds of flow time.]{\label{figs:di_equal_di=0.2:timetraj}
\psfragfig*[width = .4\textwidth]{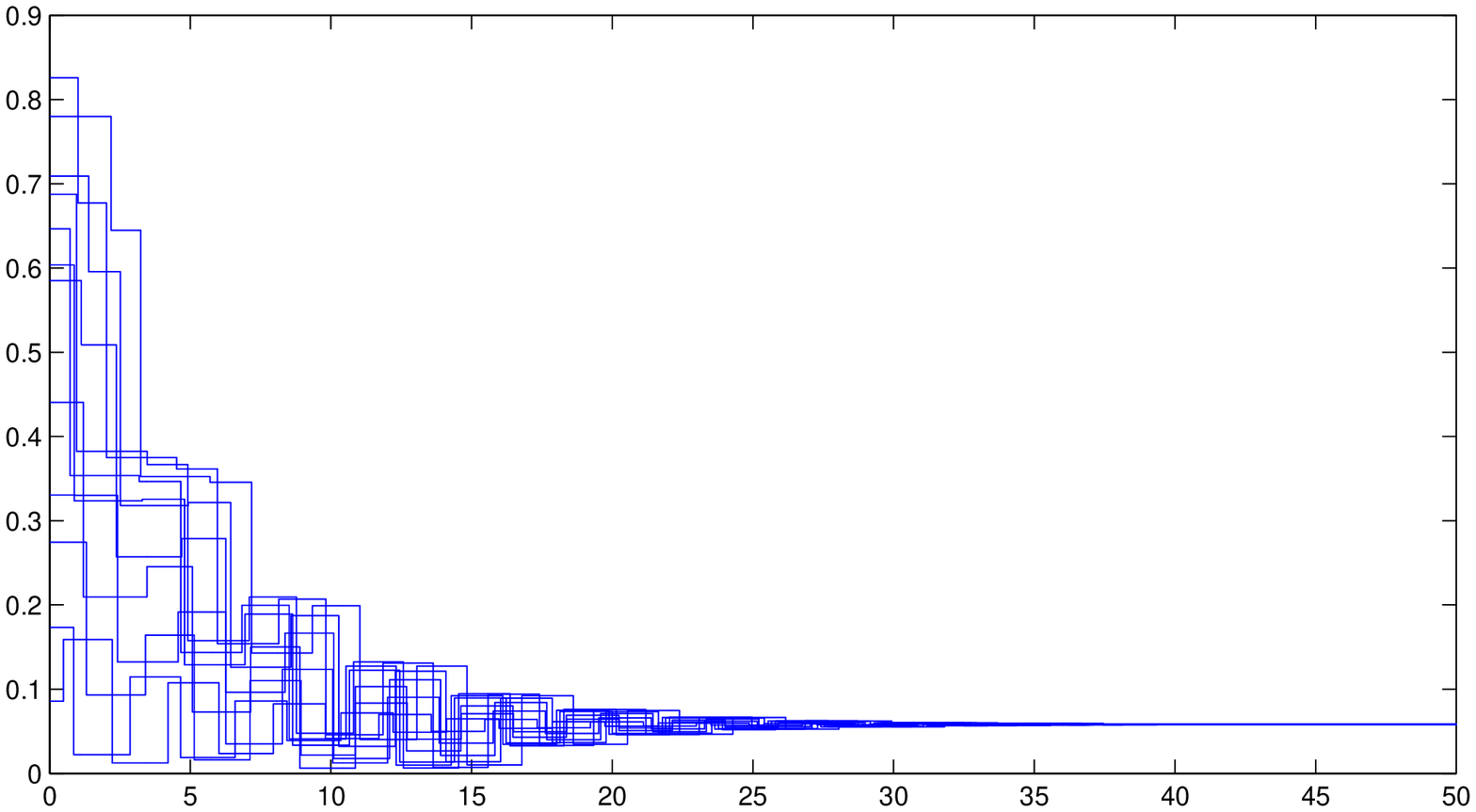}
\put(-160,0){\scalebox{.7}{ $t$ [seconds]}}
\put(-285,70){\scalebox{.7}{ $|\tau|_{\wt\A}$}}
}
\caption{Solutions to the hybrid system with perturbed threshold, namely, with $D_\rho = \{\tau : \exists i \in \{1,2\} \mbox{ s.t. } \tau_i = \tb + \rho_i\}$ for $\rho_1 = \rho_2 = 0.2$.}
\label{figs:di_equal_di=0.2}
\end{figure}}
\NotForConf{
This case of perturbations is an example of Theorem~\ref{thm:robustofAS} with $\rho$ affecting only the jump map. The trajectories of the perturbed version of $\HS_N$ will converge to a region around the set $\wt\A$. 
Simulations are presented in Figures~\ref{figs:di_equal_di=0.2} and \ref{figs:di_notequal_di} for $N = 2$, $\omega = 1$, $\tb = 3$, and $\e = -0.3$. 

Figure~\ref{figs:di_equal_di=0.2} shows numerical results for the case when each $\rho_i$ are equal, i.e., $\rho_1 = \rho_2 = 0.02$. Figure~\ref{figs:di_equal_di=0.2:tau1tau2} shows a solution (solid blue) to the perturbed $\HS_2$ with initial condition $\tau(0,0) = [1.6,2.1]^\top$ (blue asterisk) on the $(\tau_1,\tau_2)$-plane with $C$ (black dashed line), the perturbed flow set $C_\rho$ (red dashed line), and the desynchronization set $\A$ (solid green line). From this figure, notice that the solution extends beyond the set $C$ and resets at $\tau_i = 3+0.2$. The solution converges to a region near the desynchronization set, as Theorem~\ref{thm:robustofAS} guarantees.
To further clarify the response of $\HS_2$ to this type of perturbation, Figure~\ref{figs:di_equal_di=0.2:timetraj} shows the distance to the set $\wt\A$ for 10 solutions with randomly chosen initial conditions $\tau(0,0) \in C_\rho$. Notice that for the initial conditions chosen, all solutions converge to a distance of approximately $0.08$ by $t \approx 28$ seconds. 

Figure~\ref{figs:di_notequal_di} shows the numerical results for the case when each $\rho_i$ are not equal, i.e., $\rho_1 \neq \rho_2$.  Figure~\ref{fig:di_NotEqual_timetraj_di=[0.5,0.4]} shows 10 solutions from  random initial conditions $\tau(0,0) \in C_\rho$ with $\rho_1 = 0.5$ and $\rho_2 = 0.4$. For this case, the solutions converge to a region near $\wt\A$, in that, $|\tau(t,j)|_{\wt\A} \leq 0.22$ after approximately $0.28$ seconds of flow time.
Figure~\ref{di_NotEqual_timetraj_di=[0.02,0.01]} shows 15 solutions when $\rho_1 = 0.02$ and $\rho_2 = 0.01$. For this set of simulations, the solutions converge to a distance of approximately $0.04$ around $\wt\A$ after approximately 26 seconds of flow time. These simulations validate Theorem~\ref{thm:robustofAS} with $\rho$ affecting only the jump map, verifying that the smaller the size of the perturbation the smaller the steady-state value of the distance to $\wt\A$.
}
\NotForConf{ 
\begin{figure}
\setlength{\unitlength}{0.023cm}
\centering
\subfigure[Distance to the set $\wt\A$ for 10 solutions with random initial conditions ${\tau(0,0) \in [0,\tb + \rho_1] \times [0,\tb + \rho_2]}$ with $\rho_1 = 0.5$ and $\rho_2 = 0.4$.]{\label{fig:di_NotEqual_timetraj_di=[0.5,0.4]}
\includegraphics[width = .4\textwidth]{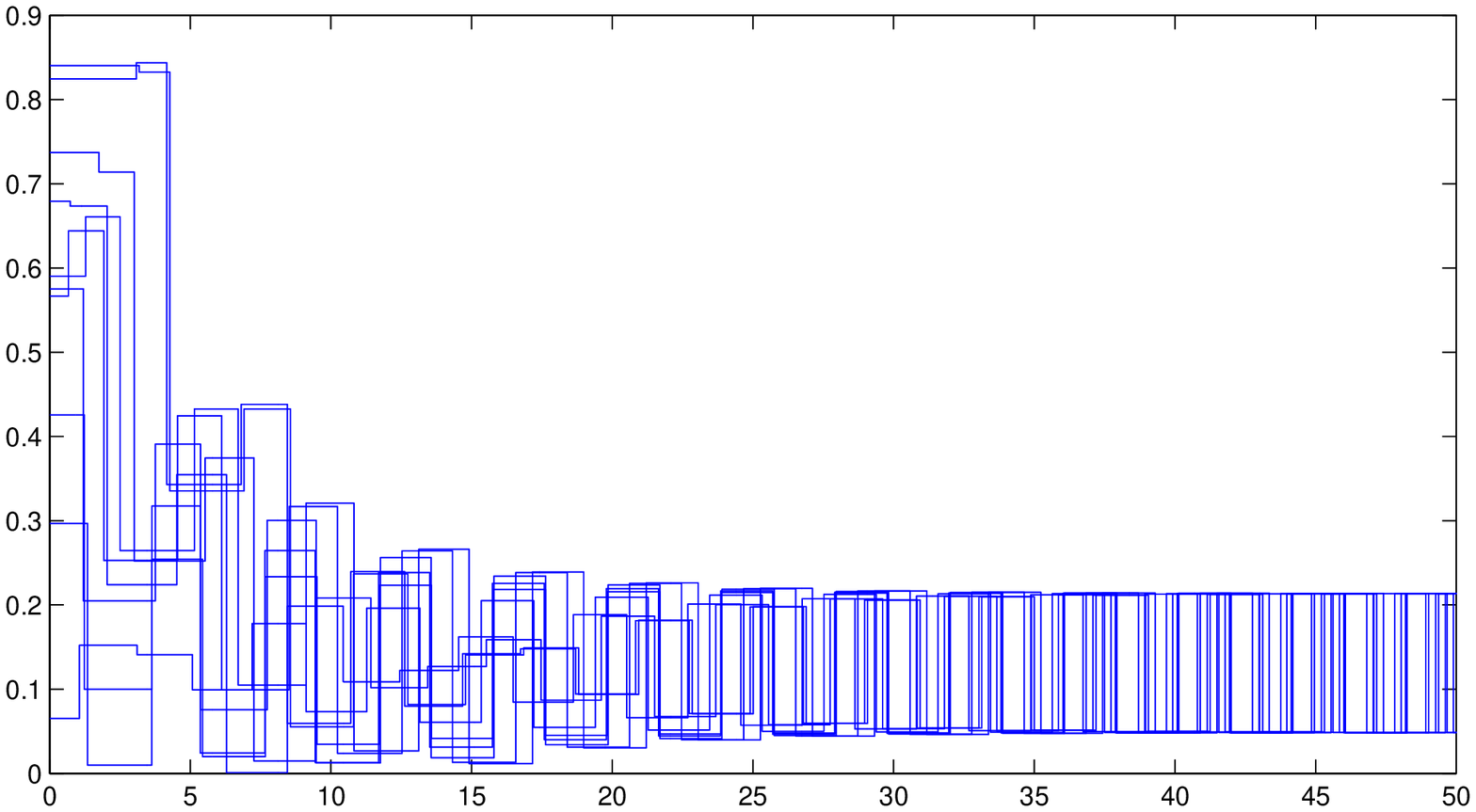}
\put(-160,0){\scalebox{.7}{ $t$ [seconds]}}
\put(-285,70){\scalebox{.7}{ $|\tau|_{\wt\A}$}}
} \hspace{.2cm}
\subfigure[Distance to the set $\wt\A$ for 15 solutions with random initial conditions ${\tau(0,0) \in [0,\tb + \rho_1] \times [0,\tb + \rho_2]}$ with $\rho_1 = 0.02$ and $\rho_2 = 0.01$.]{\label{di_NotEqual_timetraj_di=[0.02,0.01]}
\includegraphics[width = .4\textwidth]{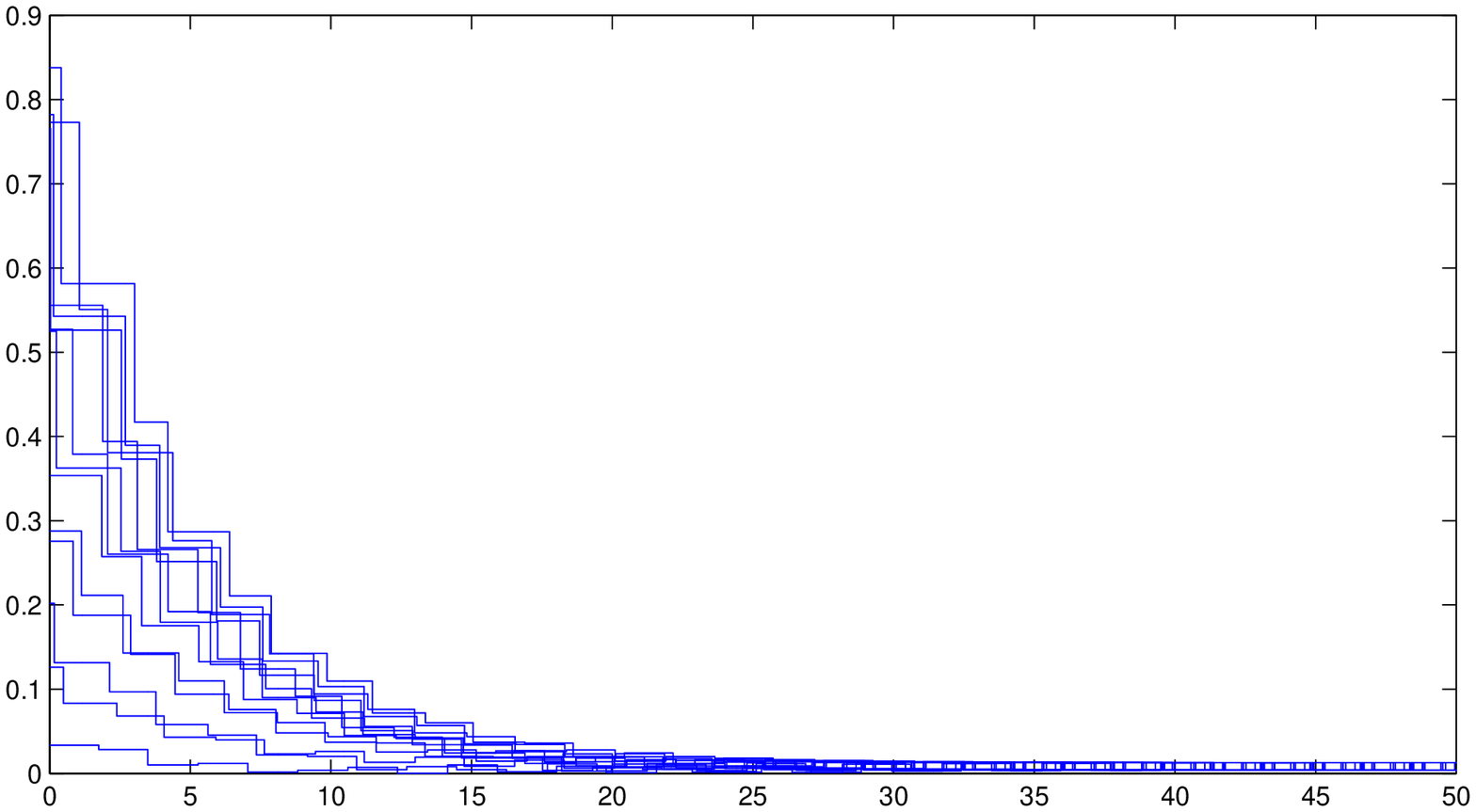}
\put(-160,0){\scalebox{.7}{ $t$ [seconds]}}
\put(-285,70){\scalebox{.7}{ $|\tau|_{\wt\A}$}}
}
\caption{Numerical simulations of the perturbed version of $\HS_2$ with  jump set given by $D_\rho = \{\tau : \exists i \in \{1,2\} \mbox{ s.t. } \tau_i = \tb + \rho_i\}$ for different values of $\rho_i$.}
\label{figs:di_notequal_di}
\end{figure}}
 
\NotForConf{
$\bullet$ {\bf Perturbations on the reset component of the jump map:}
Under the effect of the perturbations considered in this case, instead of 
reseting $\tau_i$ to zero, the perturbed jump resets $\tau_i$ to a value $\rho_i \in \reals_{\geq 0}$, for each $i \in I$. 
The perturbed hybrid system has the following data:
\IfConf{$f(\tau) = \omega \one$ for all $\tau \in C_\rho := C$;}{\begin{align*}
f(\tau) = \omega \one \qquad \forall \tau \in C_\rho := C
\end{align*}}
and 
\IfConf{$G_{\rho}(\tau) = [g_{\rho_1}(\tau), \ldots, g_{\rho_1}(\tau)]^\top$ for all $\tau \in D_\rho =  D$}{\begin{align*}
G_{\rho}(\tau) = [g_{\rho_1}(\tau), \ldots, g_{\rho_1}(\tau)]^\top \qquad \forall \tau \in D_\rho =  D
\end{align*}}
where, for each $i \in I$, the perturbed jump map is given by 
\begin{equation}
g_i (\tau) = \left\{ \begin{array}{l} \rho_i \qquad \qquad \ \ \ \ \mbox{if } \tau_{i} = \tb , \tau_r < \tb \ \ \forall j \in I\setminus \{i\} \\ 
\{\rho_i , \tau_{i}(1+\varepsilon) \} \ \mbox{if } \tau_{i} = \tb\ \exists j \in I \setminus \{i\} \  \mbox{s.t.} \  \tau_r = \tb \\ 
(1+\varepsilon)\tau_{i} \qquad \  \mbox{if } \tau_i < \tb \ \exists j \in I \setminus \{i\} \  \mbox{s.t.} \  \tau_r = \tb\end{array} \right. .\label{eqn:gi_resetperturbation}
\end{equation}

This case of perturbations exemplifies Theorem~\ref{thm:robustofAS} 
with $\rho$ affecting only the jump map of $\HS_N$. Figures~\ref{figs:ResetPerturb_di_equal_di=0.2} 
and \ref{figs:ResetPertub_di_notequal_di} show several 
simulations to this perturbation of $\HS_N$. All of the 
simulations in this section use parameters 
$\omega = 1$, $\tb = 3$, $\e = -0.3$, and $N = 2$. 

The first case of the perturbed jump map $G_\rho$ considered is for $\rho_1 = \rho_2 = 0.02$. Figure~\ref{ResetPerturb_di_equal_t1t2plot_di=0.2} shows a solution to the perturbed $\HS_2$ from the initial condition $\tau(0,0) = [2.4,2.3]^\top$ on the $(\tau_1,\tau_2)$-plane. Notice that for $\tau \in D$ such that $\tau_i = \tb$ the jump map resets $\tau_i$ to $\rho_i$ (red dashed line) and not to $0$ as in the unperturbed case. 
The solution for this case approaches a region around $\wt\A$, as Theorem~\ref{thm:robustofAS} guarantees. Figure~\ref{ResetPerturb_di_equal_timeplot_di=0.2} shows the distance to the set $\wt\A$ over time for 10 solutions of the perturbed system $\HS_2$ with initial conditions $\tau(0,0) \in P_2\setminus \X_{2}$. This figure shows that solutions approach a distance of about $0.12$ after 25 seconds. 

\begin{figure}
\setlength{\unitlength}{0.023cm}
\centering
\subfigure[Solution to $\HS_2$ on the $(\tau_1,\tau_2)$-plane with initial condition $\tau(0,0) = {[2.4,2.3]^\top}$.]{
\label{ResetPerturb_di_equal_t1t2plot_di=0.2}
\includegraphics[width = .4\textwidth]{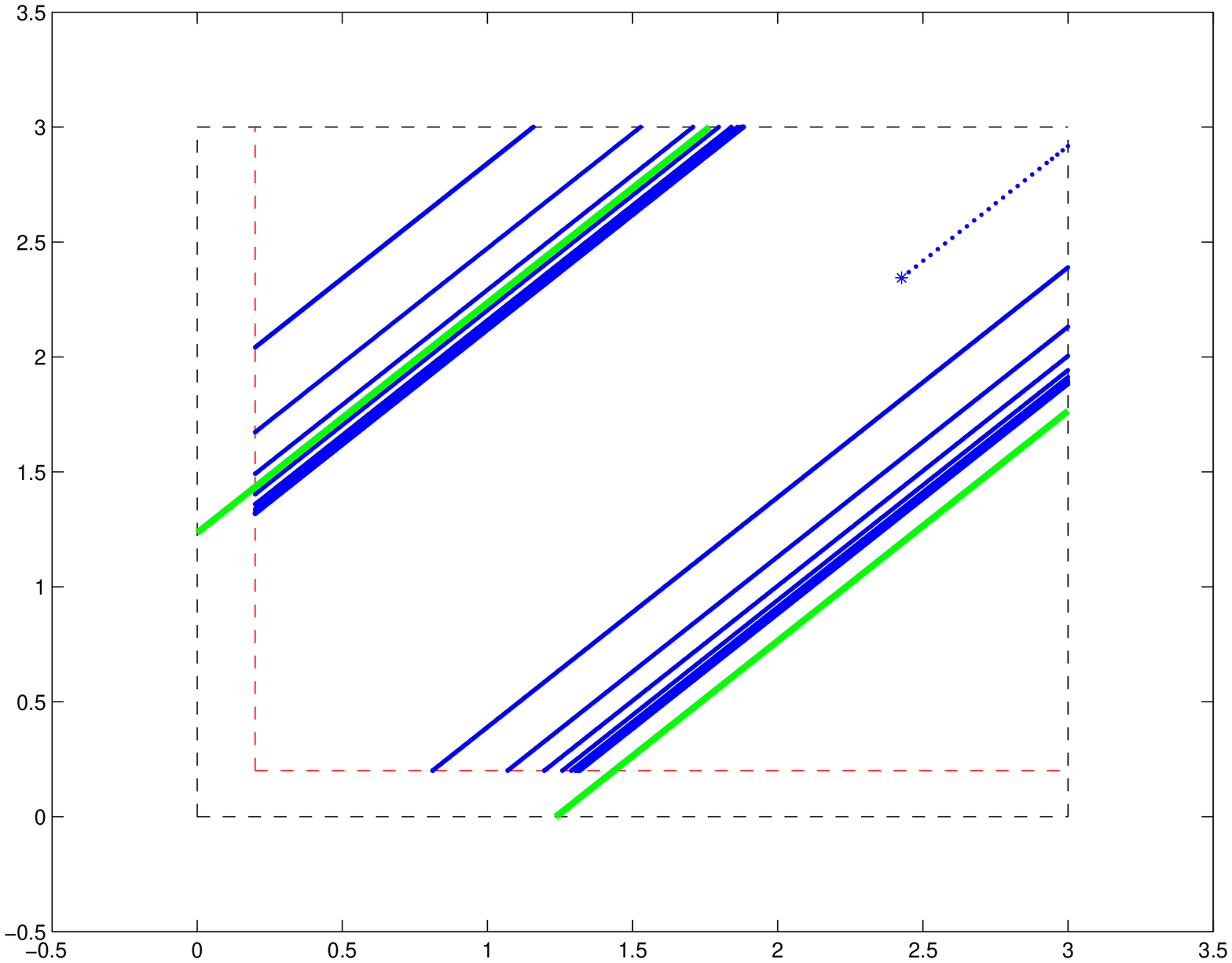}
\put(-140,6){\scalebox{.7}{ $\tau_1$}}
\put(-270,105){\scalebox{.7}{ $\tau_2$}}
}\hspace{.2cm}
\subfigure[Distance to the set $\wt\A$ for 10 solutions to $\HS_2$ with initial conditions randomly chosen from ${C}$. Most of the solutions have a distance that converges to a steady state value of approximately 0.12 at about $25$ seconds]{\label{ResetPerturb_di_equal_timeplot_di=0.2}
\includegraphics[width = .4\textwidth]{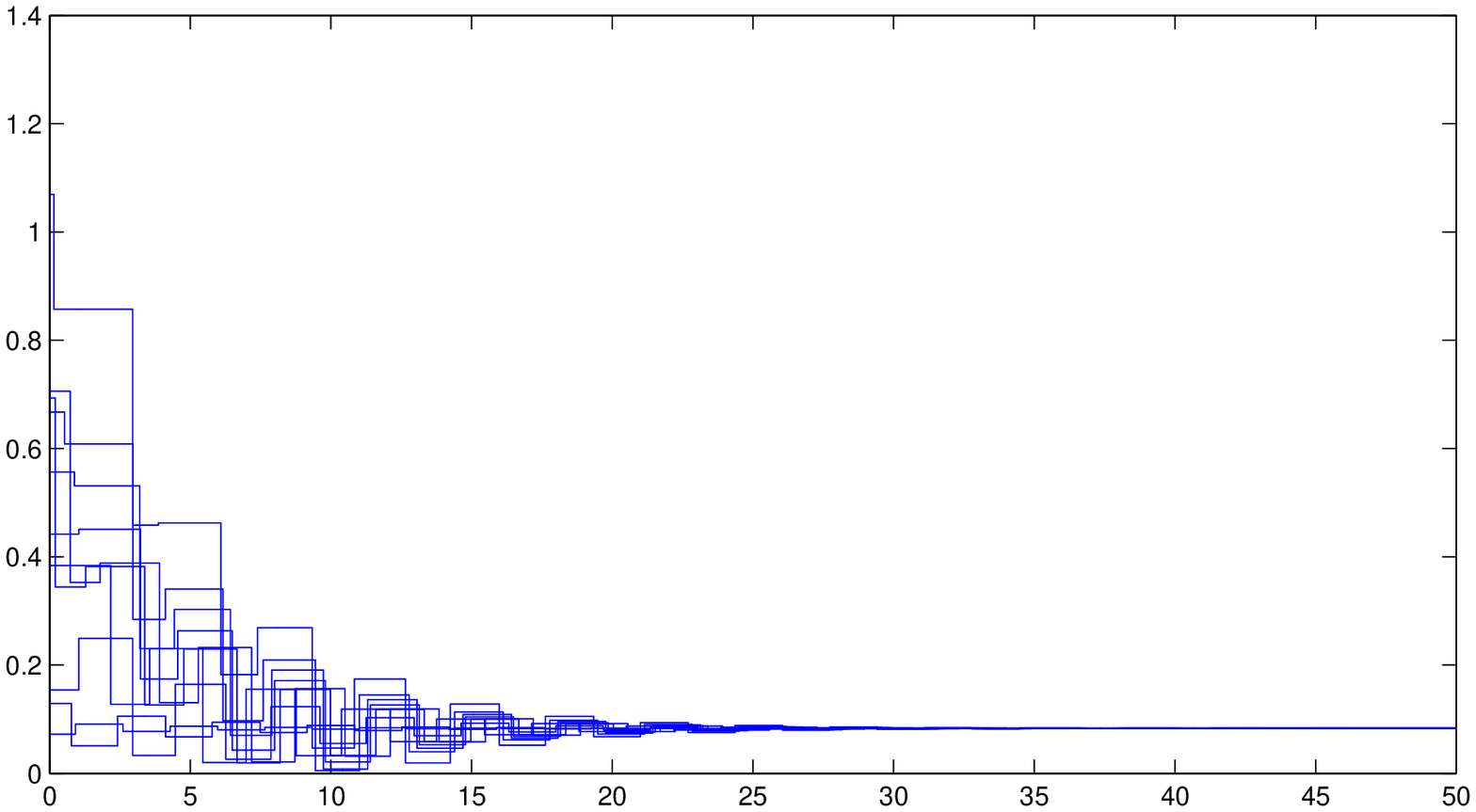}
\put(-160,0){\scalebox{.7}{ $t$ [seconds]}}
\put(-285,70){\scalebox{.7}{ $|\tau|_{\wt\A}$}}
}
\caption{Solutions to the hybrid system $\HS_2$ with the perturbed jump in \eqref{eqn:gi_resetperturbation} map with $\rho_1 = \rho_2 = 0.2$.}
\label{figs:ResetPerturb_di_equal_di=0.2}
\end{figure}

Now, consider the case where $\rho_1 \neq \rho_2$. Figure~\ref{figs:ResetPertub_di_notequal_di} shows the distance to $\wt\A$ for two sets of solutions with different values for $\rho_1$ and $\rho_2$. More specifically, Figure~\ref{ResetPertub_di_NotEqual_timeplot_di=[0.15,0.25]} shows the case of $\rho_1 = 0.15$ and $\rho_2 = 0.25$. For this case, it can be seen that the solutions converge after $\approx 28$ seconds of flow time and, after that time, satisfy $|\tau(t,j)|_{\wt\A} \leq 0.25$.  Figure~\ref{ResetPertub_di_NotEqual_timeplot_di=[0.01,0.02]} shows the case of $\rho_1 = 0.02$ and $\rho_2 = 0.01$. For this case, this figure shows that, after $\approx 28$ seconds of flow time, the solutions satisfy $|\tau(t,j)|_{\wt\A} \leq 0.04$. These simulations validate Theorem~\ref{thm:robustofAS} with $\rho$ affecting only the jump map, verifying that the smaller the size of the perturbation the smaller the steady-state value of the distance to $\wt\A$. 

\begin{figure}
\setlength{\unitlength}{0.023cm}
\centering
\subfigure[Distance to the set $\wt\A$ for 10 solutions with random initial conditions ${\tau(0,0) \in C}$ with $\rho_1 = 0.15$ and $\rho_2 = 0.25$.]{\label{ResetPertub_di_NotEqual_timeplot_di=[0.15,0.25]}
\includegraphics[width = .4\textwidth]{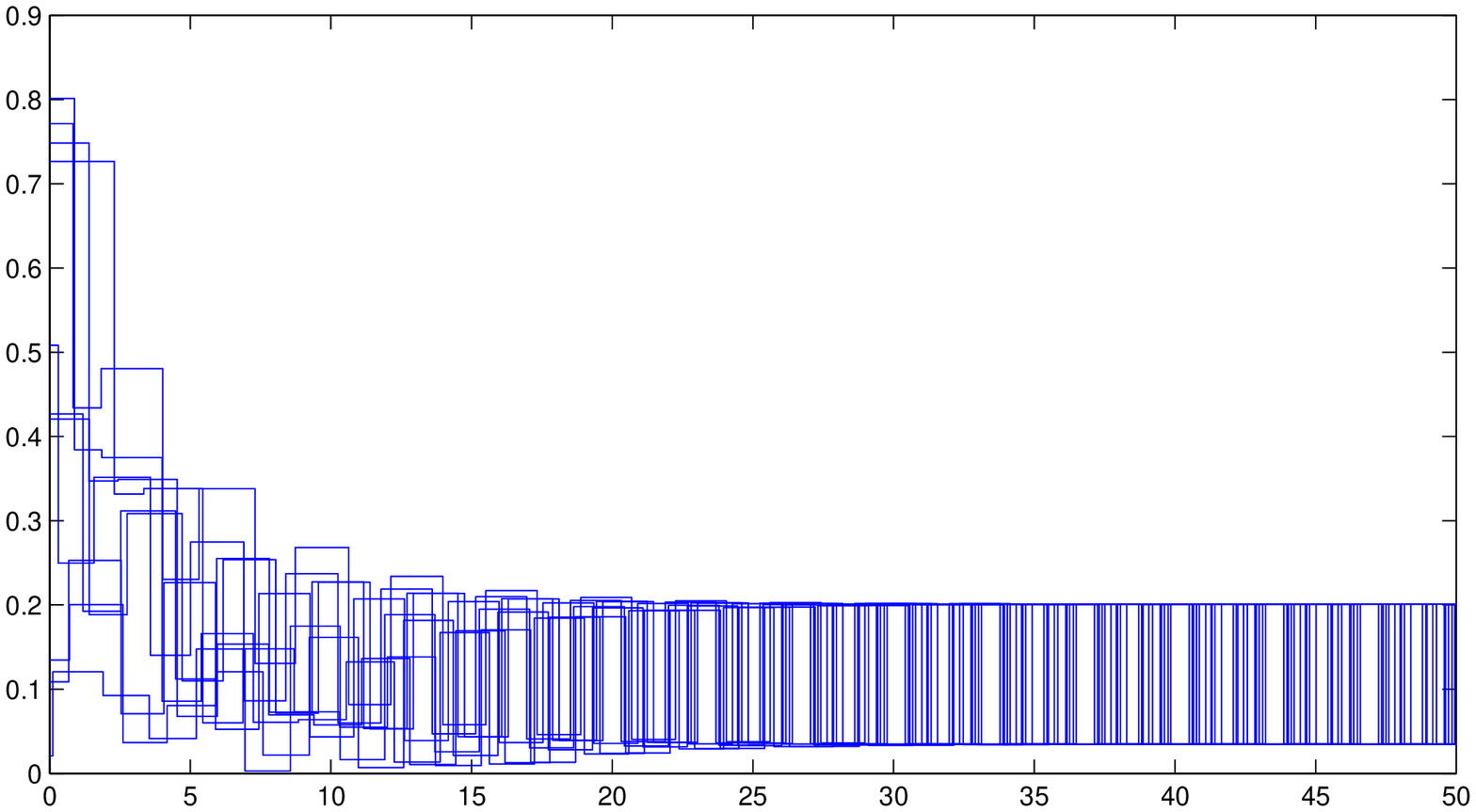}
\put(-160,0){\scalebox{.7}{ $t$ [seconds]}}
\put(-285,70){\scalebox{.7}{ $|\tau|_{\wt\A}$}}
}\hspace{.2cm}
\subfigure[Distance to the set $\wt\A$ for 10 solutions with random initial conditions ${\tau(0,0) \in C}$ with $\rho_1 = 0.02$ and $\rho_2 = 0.01$.]{\label{ResetPertub_di_NotEqual_timeplot_di=[0.01,0.02]}
\includegraphics[width = .4\textwidth]{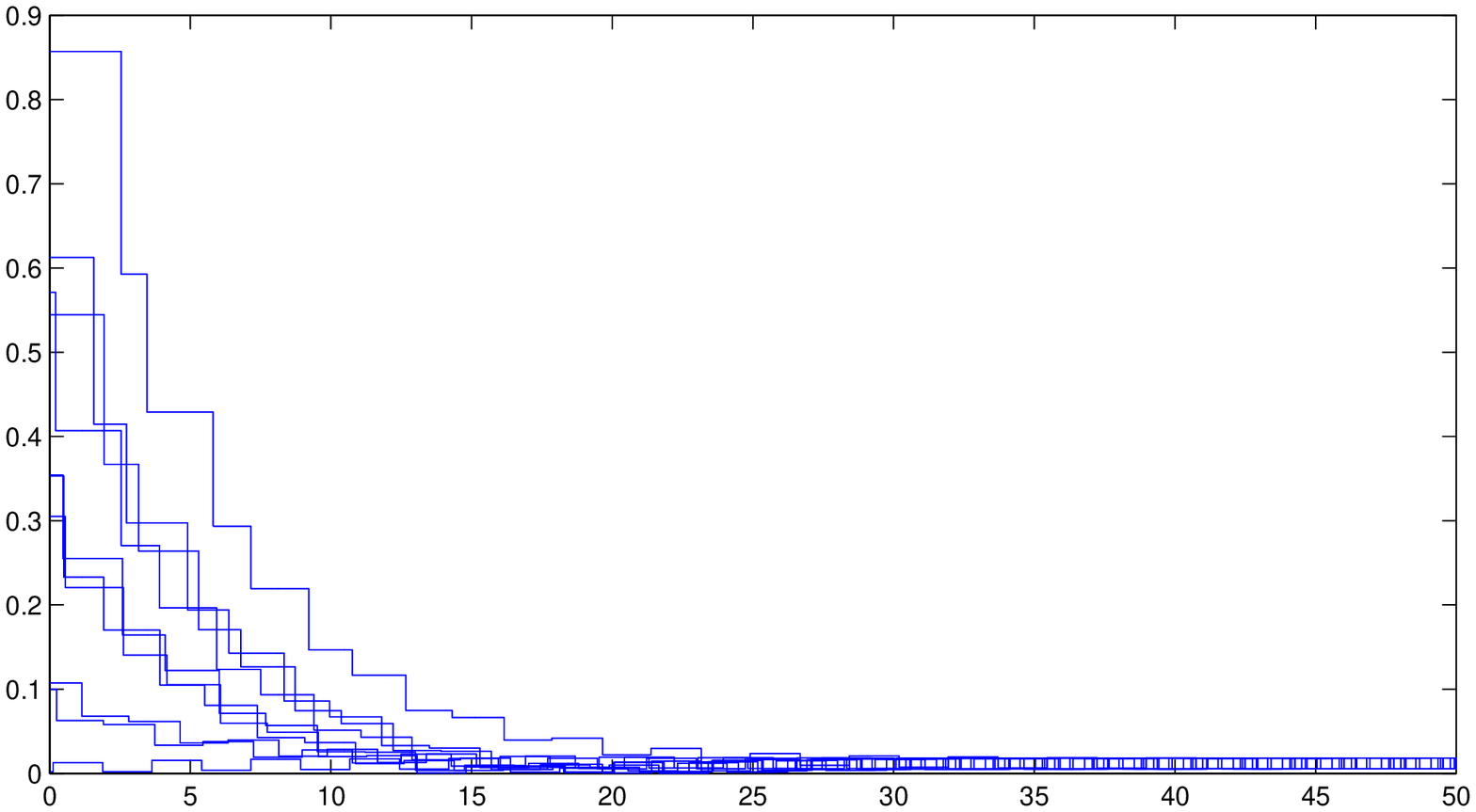}
\put(-160,0){\scalebox{.7}{ $t$ [seconds]}}
\put(-285,70){\scalebox{.7}{ $|\tau|_{\wt\A}$}}
}
\caption{Solutions to the hybrid system $\HS_2$ with the perturbed jump map with $\rho_1 \neq \rho_2$.}
\label{figs:ResetPertub_di_notequal_di}
\end{figure}
}
\IfConf{In this section, we consider perturbations on the ``bump'' component of the jump map. More precisely,}{$\bullet$ {\bf Perturbations on the ``bump'' component of the jump map:}
In this case,} the component $(1+\e)\tau_i$ of the jump map is perturbed, namely, we use $\tau_i^+ = (1+\e) \tau_i + \rho_i(\tau_i)$, where $\rho_i : \reals_{\geq 0} \to P_N \setminus \X$ is a continuous function. The perturbed jump map $G_{\rho}$ has components $g_{\rho i}$ that are given as $g_{i}$ in \eqref{eqn:gi} but with $\tau_{i}(1+\e) + \rho_{i}(\tau_{i})$ replacing $\tau_{i}(1+\e)$.

Consider the case $\rho_i(\tau_{i}) = \wt\rho_i \tau_i$ with $\wt\rho_i \in (0,|\e|)$ and let $\wt\e_i = \e + \wt\rho_i \in (-1,0)$. Then $\tau_i^+$ reduces to $\tau_i^+ = (1+\wt\e_i)\tau_i$ and the jump map $g_{\rho i}$ is given by \eqref{eqn:gi} with $\wt\e_{i}$ in place of $\e$. This type of perturbation is used to verify Theorem~\ref{thm:robustofAS} with $\rho$ affecting only the ``bump'' portion of the jump map. \IfConf{Figures~\ref{figs:di_equal_di=0.2}}{Figures~\ref{figs:ResetBumpPerturb_di_equal_di=0.2}} and \ref{figs:ResetBumpPerturb_di_notequal_di} show simulations to $\HS_N$ with the parameters $\omega = 1$, $\tb = 3$, $\e = -0.3$, and $N = 2$.

\NotForConf{\begin{figure}
\setlength{\unitlength}{0.023cm}
\centering
\subfigure[Solution $\HS_2$ on the $(\tau_1,\tau_2)$-plane with initial condition $\tau(0,0) = {[0.1,0.2]^\top}$.]{
\includegraphics[width = .4\textwidth]{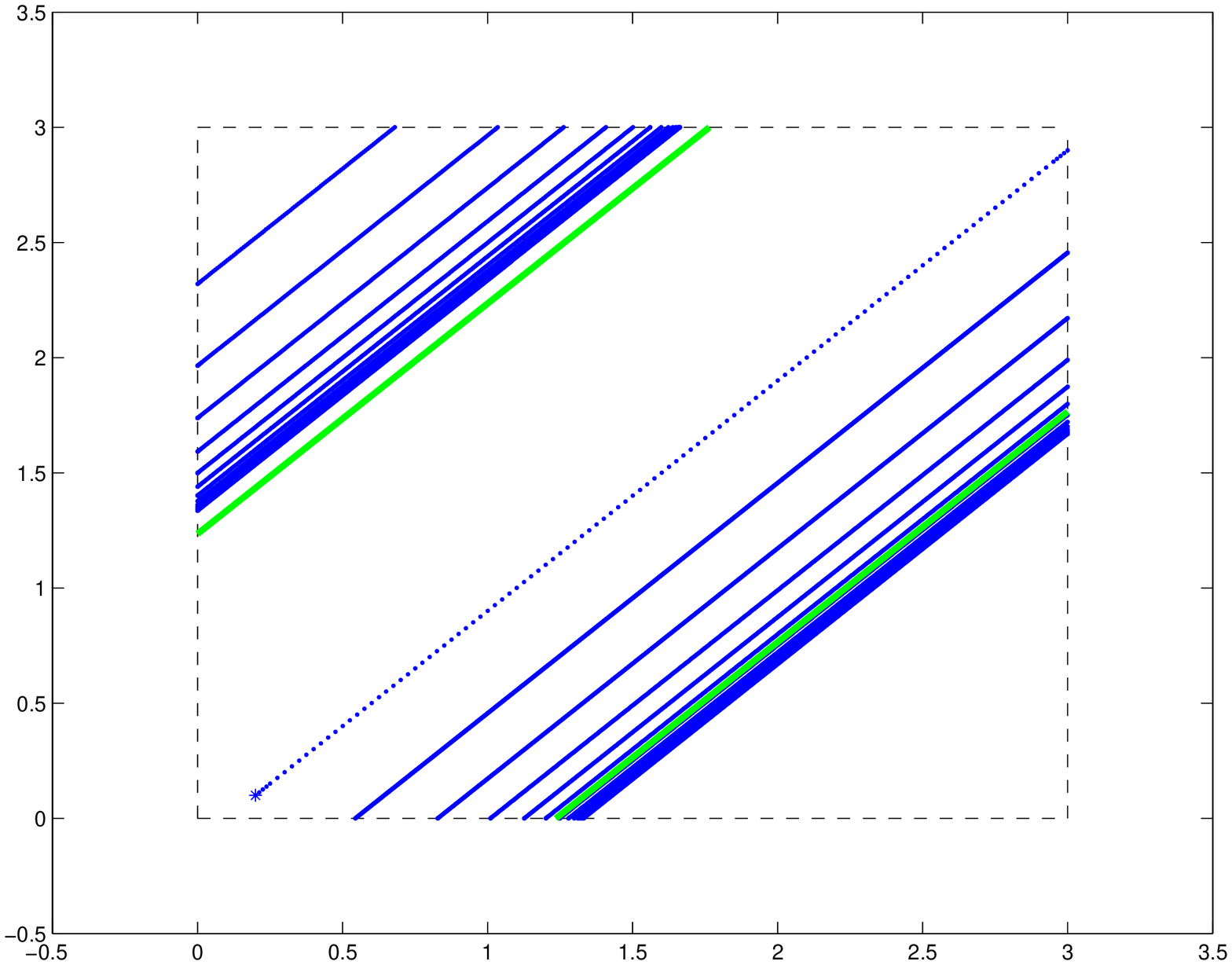}
\put(-140,6){\scalebox{.7}{ $\tau_1$}}
\put(-270,105){\scalebox{.7}{ $\tau_2$}}
}\hspace{.2cm}
\subfigure[Distance to the set $\wt\A$ for 10 solutions to $\HS_2$ with initial conditions randomly chosen from $C$. These solutions have a distance that converges to a steady state value of approximately $0.08$ at about 45 seconds.]{
\label{ResetBumpPertub_di_equal_timeplot_di=[0.1,0.1]}
\includegraphics[width = .4\textwidth]{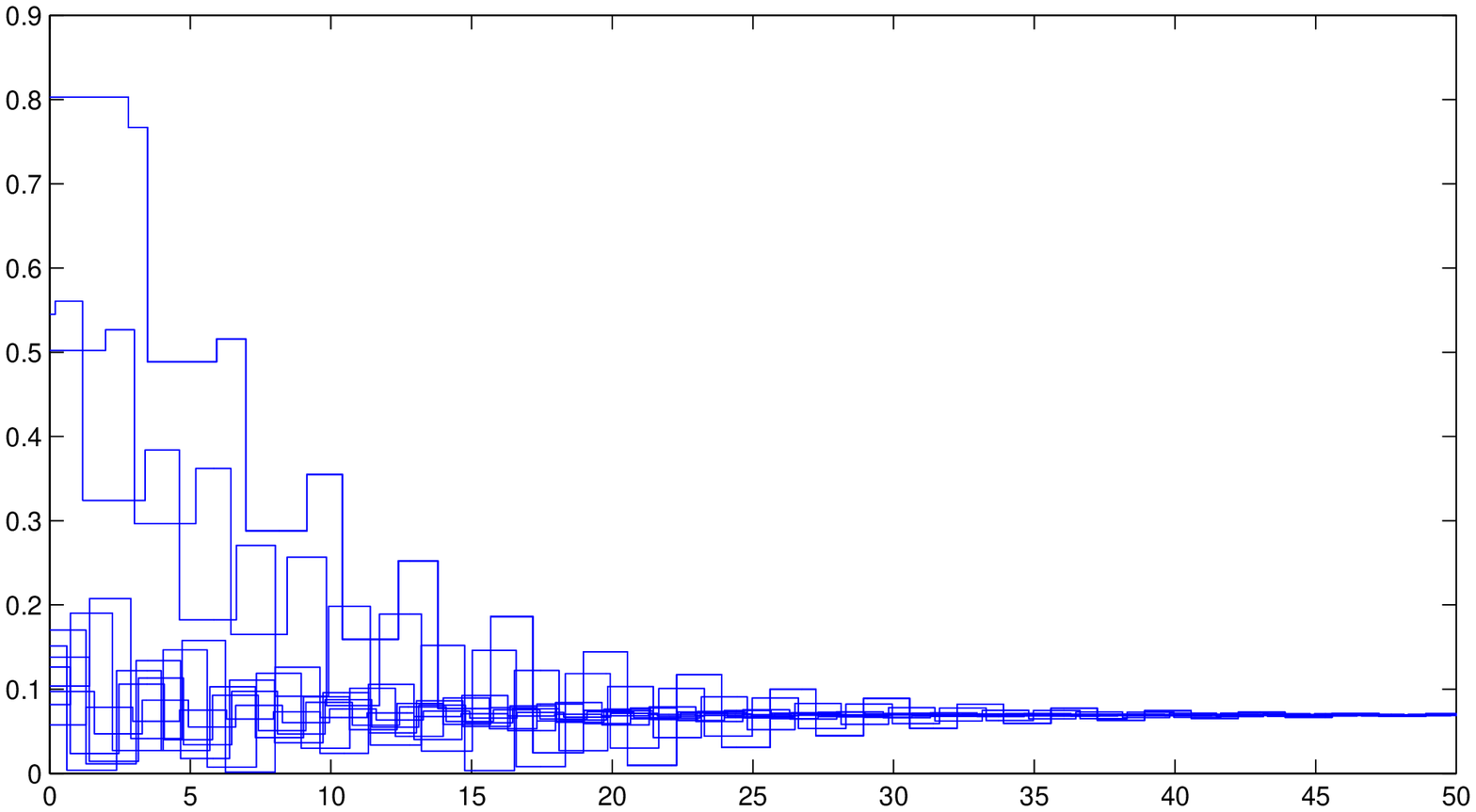}
\put(-160,0){\scalebox{.7}{ $t$ [seconds]}}
\put(-285,70){\scalebox{.7}{ $|\tau|_{\wt\A}$}}
}
\caption{Solutions to the hybrid system with perturbed ``bump'' on the jump map, with $\wt\rho_1 = \wt\rho_2 = 0.1$.}
\label{figs:ResetBumpPerturb_di_equal_di=0.2}
\end{figure} }

Consider the case of $\HS_2$ with $G_\rho$ when  $\wt\rho_1 = \wt\rho_2 = 0.1$, leading to $\wt\e_1 = \wt\e_2$ = 0.2.  \IfConf{Figure~\ref{fig:ResetBumpPertub_di_equal_t1t2plot_di=[0.1,0.1]}}{Figure~\ref{figs:ResetBumpPerturb_di_equal_di=0.2}} shows a solution on the $(\tau_1,\tau_2)$-plane for this case with initial condition $\tau(0,0) = [0.1,0.2]^\top$. Notice that the solution approaches a region around $\A$ (green line), as Theorem~\ref{thm:robustofAS} guarantees. Figure~\ref{ResetBumpPertub_di_equal_timeplot_di=[0.1,0.1]} shows the distance to the set $\wt\A$ over time for 10 solutions with initial conditions $\tau(0,0) \in C$. It shows that solutions approach a distance to $\wt\A$ of $\approx0.09$ after $\approx40$ seconds of flow time.

\IfConf{
\begin{figure}
\setlength{\unitlength}{0.0165cm}
\centering
\subfigure[Distance to the set $\wt\A$ for 10 solutions with random initial conditions ${\tau(0,0) \in C}$ with $\wt\rho_1 = 0.15$ and $\wt\rho_2 = 0.1$.]{
\label{ResetBumpPertub_di_NotEqual_timeplot_di=[0.15,0.1]}
\includegraphics[width = .22\textwidth,trim = 20mm 00mm 15mm 6mm, clip]{Figures/ResetBumpPertub_di_NotEqual_timeplot_di=[0.15,0.1].eps}
\put(-146,0){\scalebox{.7}{$t$ [seconds]}}
\put(-265,70){\scalebox{.7}{$|\tau|_{\wt\A}$}}
}\hspace{.2cm}
\subfigure[Distance to the set $\wt\A$ for 10 solutions with random initial conditions ${\tau(0,0) \in C}$ with $\wt\rho_1 = 0.02$ and $\wt\rho_2 = 0.01$.]{
\label{fig:ResetBumpPertub_di_NotEqual_timeplot_di=[0.02,0.01]}
\includegraphics[width = .22\textwidth,trim = 20mm 00mm 15mm 8mm, clip]{Figures/ResetBumpPertub_di_NotEqual_timeplot_di=[0.02,0.01].eps}
\put(-146,0){\scalebox{.7}{$t$ [seconds]}}
\put(-265,70){\scalebox{.7}{$|\tau|_{\wt\A}$}}
}
\caption{Numerical simulations of the perturbed version of $\HS_2$ with the perturbed ``bump'' on the jump map with
 $\wt\rho_1 \neq \wt\rho_2$.}
\label{figs:ResetBumpPerturb_di_notequal_di}
\end{figure}}
{\begin{figure}
\setlength{\unitlength}{0.023cm}
\centering
\subfigure[Distance to the set $\wt\A$ for 10 solutions with random initial conditions ${\tau(0,0) \in C}$ with $\wt\rho_1 = 0.15$ and $\wt\rho_2 = 0.1$.]{
\label{ResetBumpPertub_di_NotEqual_timeplot_di=[0.15,0.1]}
\includegraphics[width = .4\textwidth]{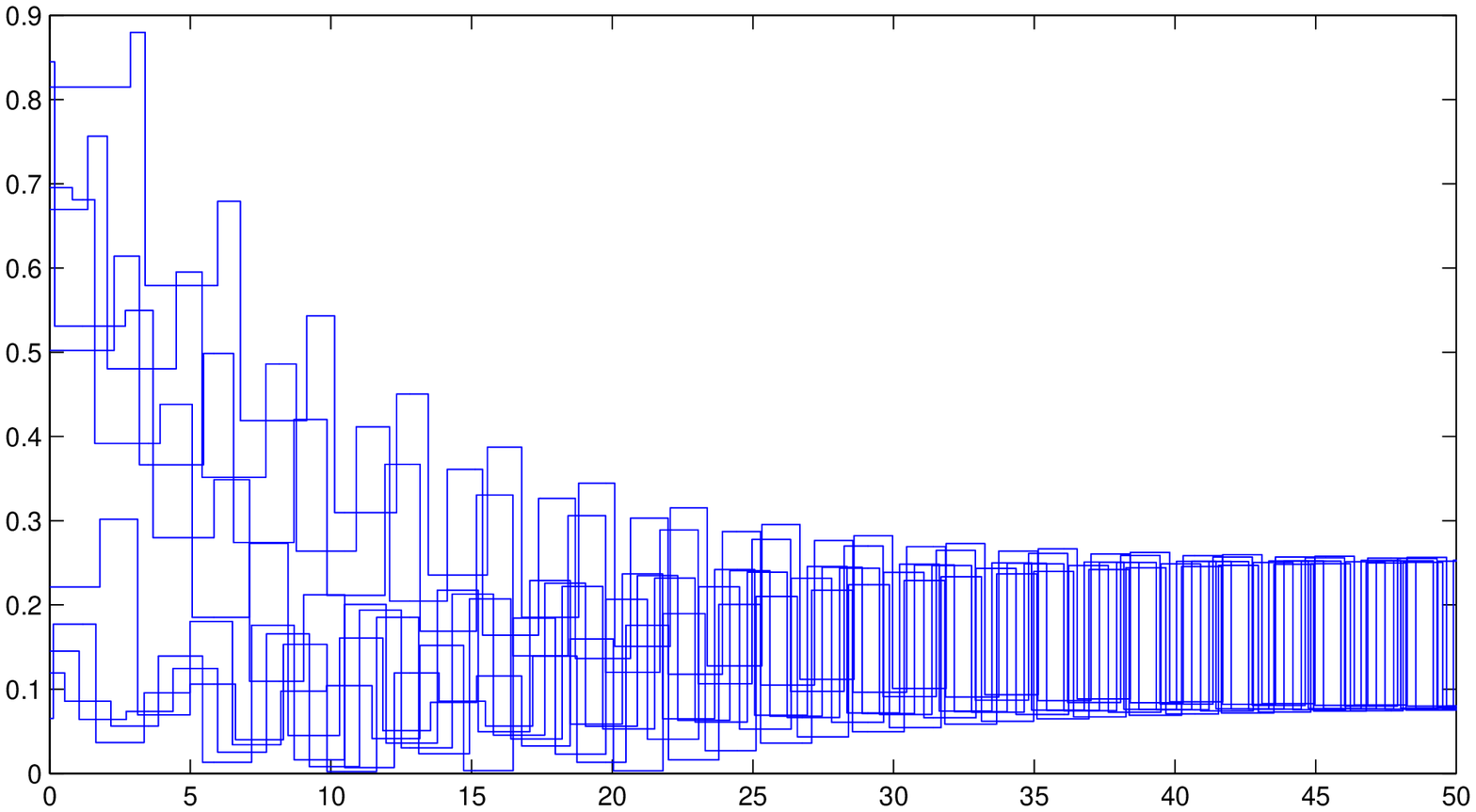}
\put(-160,0){\scalebox{.7}{ $t$ [seconds]}}
\put(-285,70){\scalebox{.7}{ $|\tau|_{\wt\A}$}}
}\hspace{.2cm}
\subfigure[Distance to the set $\wt\A$ for 10 solutions with random initial conditions ${\tau(0,0) \in C}$ with $\wt\rho_1 = 0.02$ and $\wt\rho_2 = 0.01$.]{
\label{fig:ResetBumpPertub_di_NotEqual_timeplot_di=[0.02,0.01]}
\includegraphics[width = .4\textwidth]{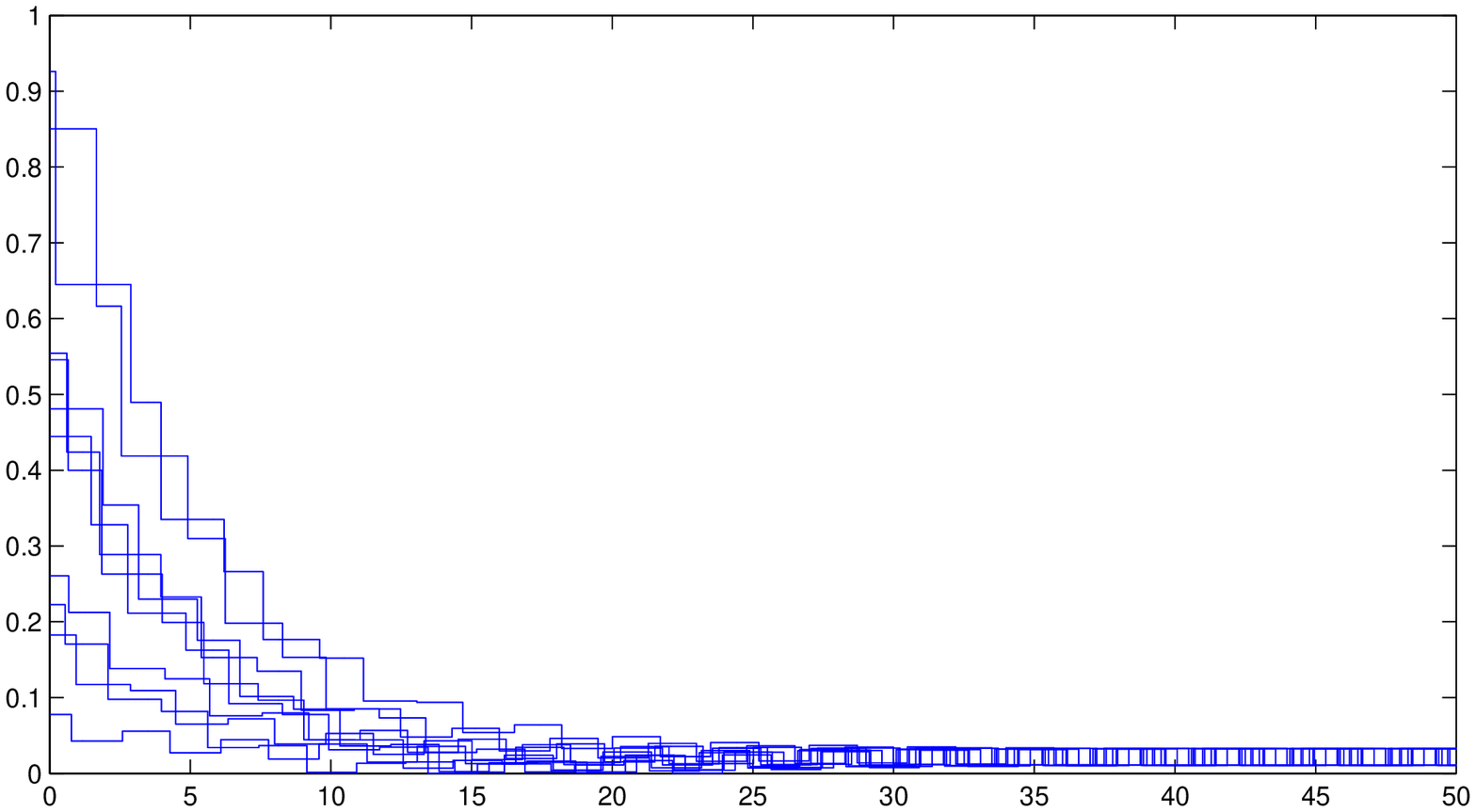}
\put(-160,0){\scalebox{.7}{ $t$ [seconds]}}
\put(-285,70){\scalebox{.7}{ $|\tau|_{\wt\A}$}}
}
\caption{Numerical simulations of the perturbed version of $\HS_2$ with the perturbed ``bump'' on the jump map with
 $\wt\rho_1 \neq \wt\rho_2$.}
\label{figs:ResetBumpPerturb_di_notequal_di}
\end{figure}}

Next, we consider the case of $G_{\rho}$ with $\wt\e_1 \neq \wt\e_2$. Figure~\ref{ResetBumpPertub_di_NotEqual_timeplot_di=[0.15,0.1]} shows the distance to $\wt\A$ for 10 solutions with perturbations given by $\wt\rho_1 = 0.15$ and $\wt\rho_2 = 0.1$. For this case, the distance to $\wt\A$ satisfies $|\tau(t,j)|_{\wt\A} \leq 0.3$ after $\approx40$ seconds of flow time. Figure~\ref{fig:ResetBumpPertub_di_NotEqual_timeplot_di=[0.02,0.01]} shows simulation results with $\wt\rho_1 = 0.02$ and $\wt\rho_2 = 0.01$. Notice that the smaller the value of the perturbation is, the closer the solutions get to the set $\wt\A$. For this case, after $\approx30$ seconds of flow time, the distance to $\wt\A$ satisfies $|\tau(t,j)|_{\wt\A} \leq 0.06$. These simulations validate Theorem~\ref{thm:robustofAS} with $\rho$ affecting only the jump map, verifying that the smaller the size of the perturbation the smaller the steady-state value of the distance to $\wt\A$ would be.


%
%

\subsubsection{Perturbations on the Flow Map}\label{sec:flowpertrubs} 

In this section, we consider a class of perturbations on the flow map. 
More precisely, consider the case when there exists a function $(t,j) \mapsto c(t,j)$ such that $c(t,j) \leq \bar{c}$ with $\bar{c}$ as in  \eqref{eqn:ctbound}. Then, from Theorem~\ref{thm:vanishingC} with \eqref{eqn:fdelta}, we know that 
\begin{align}
\lim_{t+j \to \infty} |\tau(t,j)|_{\wt\A} \leq \left|\frac{\bar{c}\tb}{\e\omega}\right| \leq \left|\frac{\left|(\frac{1}{N}\underline{\bf 1} - {\bf I})\Delta\omega\right|\tb}{\e\omega}\right|. \label{eqn:flowpert2}
\end{align}

Figure~\IfConf{\ref{figs:allflowperturbs}}{\ref{fig:c=constant}} shows a simulation so as to verify this property. The parameters of this simulation are $N = 2$, $\omega = 1$, $\e = -0.3$, $\tb = 4$, and $\Delta\omega = [0.120,0.134]^\top$. It follows from \eqref{eqn:ctbound} that $\overline{c} = 0.0105$. Then, from \eqref{thmeqn:distupperbound}, it follows that $\lim_{t+j \to \infty}|\tau(t,j)|_{\wt\A} \leq 0.1047$. Specifically, Figure~\ref{fig:c=constant_tau1tau2plot} shows a solution  on the $(\tau_1,\tau_2)$-plane of the perturbed hybrid system $\HS_2$ with initial condition $\tau(0,0) = [0,0.01]^\top$. This figure shows the solution (blue line) converging to a region around $\wt\A$ (between dash-dotted lines about $\A$ in green). Figure~\ref{fig:c=constant_disttimetraj} shows the distance to the set $\wt\A$ of 10 solutions with initial conditions $\tau(0,0) \in C$ with a dashed line denoting the upper bound on the distance in \eqref{eqn:flowpert2}. Notice that all solutions are within this bound after approximately 15 seconds of flow time and stay within this region afterwards. 

\IfConf{\begin{figure}
\setlength{\unitlength}{0.016cm}
\centering
\subfigure[Initial condition $\tau(0,0) = {[0,0.01]^\top}$.\hspace{-.1cm}]{
\label{fig:c=constant_tau1tau2plot}
\includegraphics[width=.2\textwidth,trim = 20mm 0mm 20mm 10mm, clip]{Figures/c=constant_tau1tau2plot.eps}
\put(-120,4){\scalebox{.7}{ $\tau_1$}}
\put(-240,105){\scalebox{.7}{ $\tau_2$}}
}\hspace{.2cm}
\hspace{0.2cm}
\subfigure[Distance to the set $\A$ for 10 solutions of the perturbed $\HS_2$ with random initial conditions $\tau(0,0) \in C$.]{
\label{fig:c=constant_disttimetraj}
\includegraphics[width=.22\textwidth,trim = 20mm 00mm 15mm 8mm, clip]{Figures/c=constant_disttimetraj.eps}
\put(-146,0){\scalebox{.7}{$t$ [seconds]}}
\put(-275,70){\scalebox{.7}{$|\tau|_{\wt\A}$}}
}
\caption{Solutions to the hybrid system $\HS_2$ with perturbed flow map given by the cases covered in Section~\ref{sec:flowpertrubs}. 
Figures (a) and (b) show solutions given by the flow perturbation $\Delta\omega = {[0.120,0.134]}^\top$ given in Section~\ref{sec:flowpertrubs}(2). Note that these figures have a dashed black line denoting the calculated distance from $\wt\A$ in \eqref{eqn:flowpert2}.}
\label{figs:allflowperturbs}
\end{figure}}
{\begin{figure}
\setlength{\unitlength}{0.023cm}
\centering
\subfigure[Initial condition $\tau(0,0) = {[0,0.01]^\top}$.\hspace{-.1cm}]{
\label{fig:c=constant_tau1tau2plot}
\includegraphics[width=.4\textwidth]{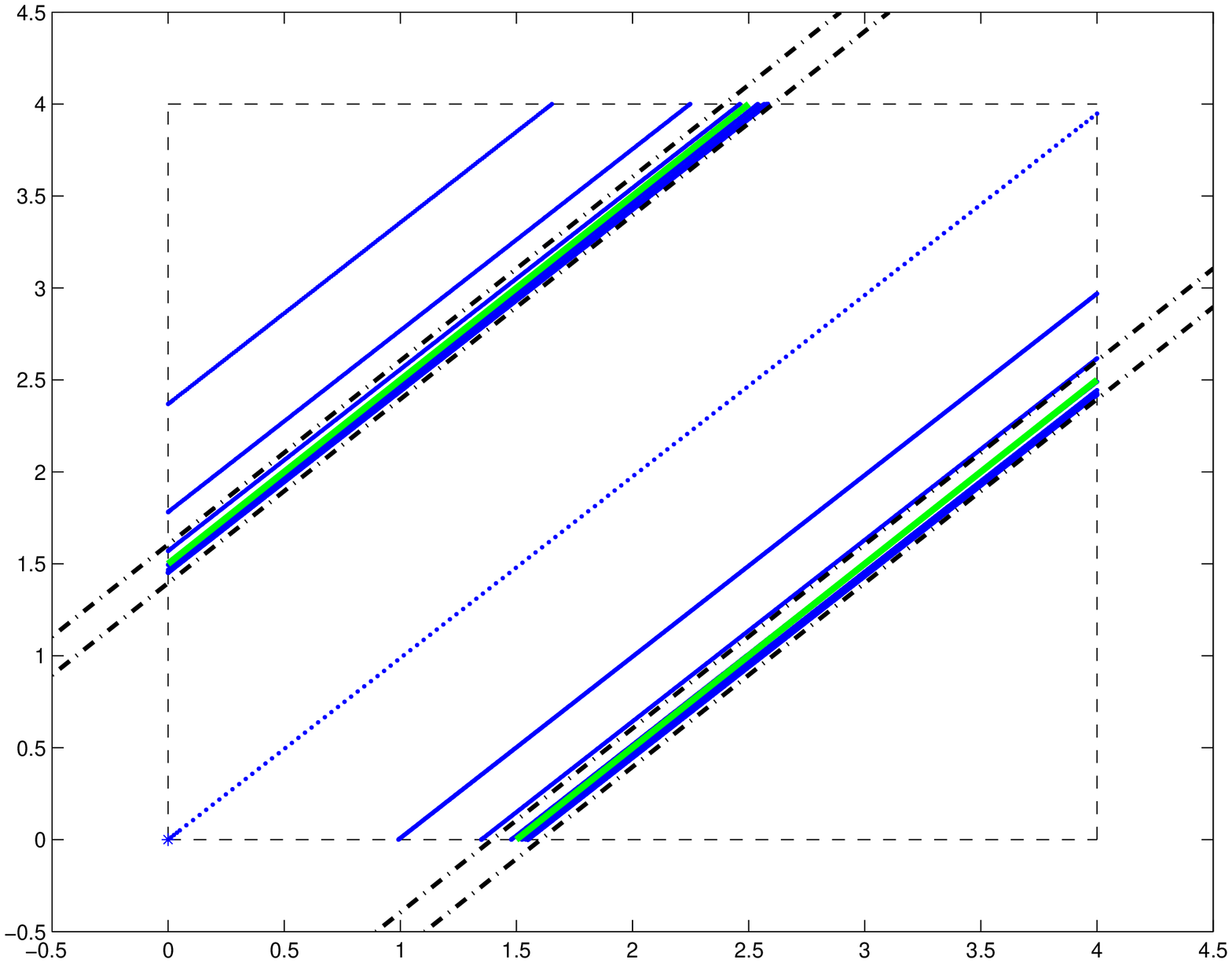}
\put(-140,6){\scalebox{.7}{ $\tau_1$}}
\put(-270,105){\scalebox{.7}{ $\tau_2$}}
}\hspace{0.2cm}
\subfigure[Distance to the set $\wt\A$ for 10 solutions of the perturbed $\HS_2$ with random initial conditions $\tau(0,0) \in C$.]{
\label{fig:c=constant_disttimetraj}
\includegraphics[width=.4\textwidth]{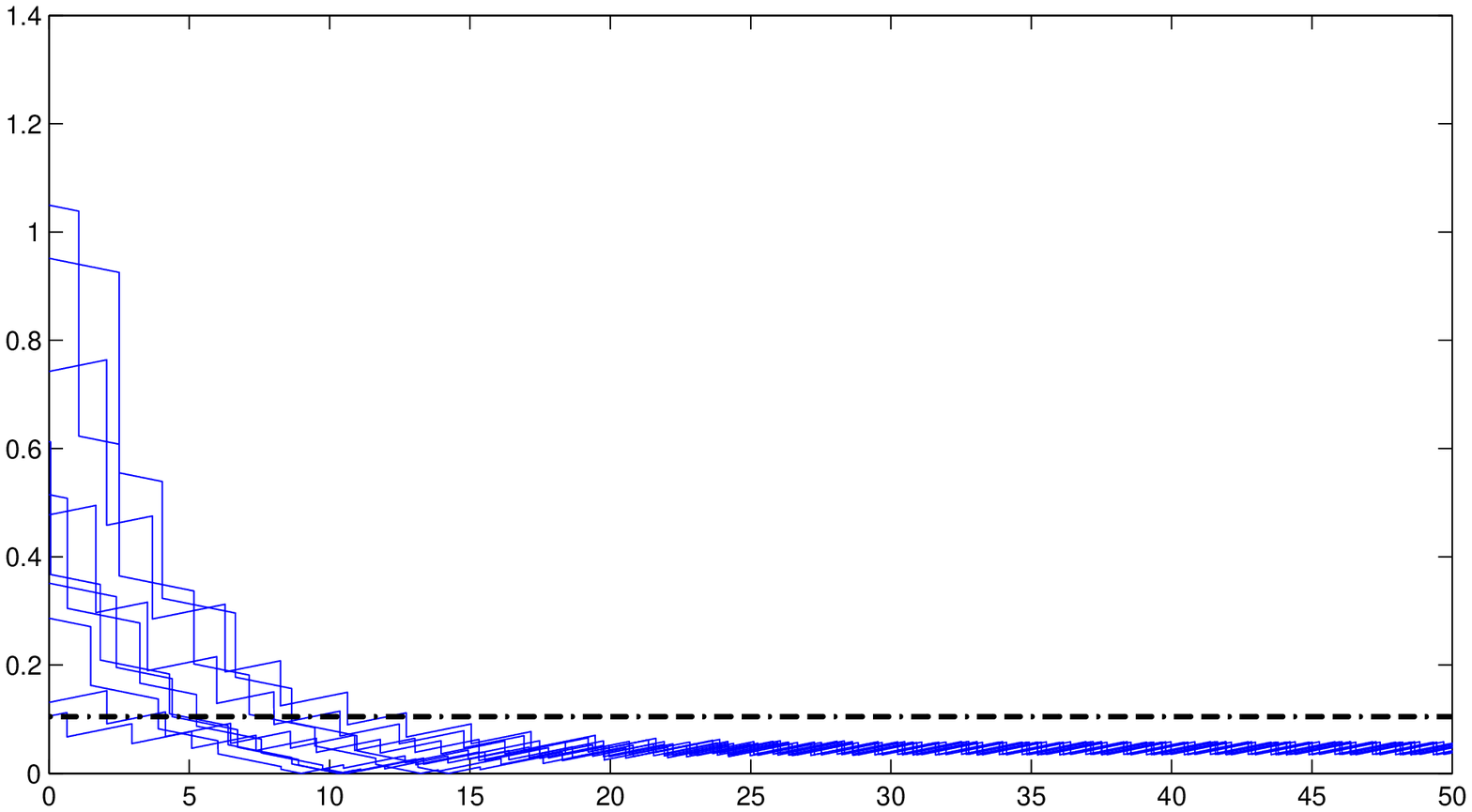}
\put(-160,0){\scalebox{.7}{ $t$ [seconds]}}
\put(-285,70){\scalebox{.7}{ $|\tau|_{\wt\A}$}}
}
\caption{Solutions to the hybrid system $\HS_2$ with perturbed flow map given by the cases covered in Section~\ref{sec:flowpertrubs}. 
Figures (a) and (b) show solutions given by the flow perturbation $\Delta\omega = {[0.120,0.134]}^\top$ given in Section~\ref{sec:flowpertrubs}. Note that these figures have a dashed black line denoting the calculated distance from $\wt\A$ in \eqref{eqn:flowpert2}.}
\label{fig:c=constant}
\end{figure}}

\section{Conclusion}\label{sec:conclusion}
We have shown that desynchronization in a class of impulse-coupled oscillators is an asymptotically stable and robust property. These properties are established within a solid framework for modeling and analysis of hybrid systems, which is amenable for the study of synchronization and desynchronization in other impulse-coupled oscillators in the literature. The main difficulty in applying these tools lies on the construction of a Lyapunov-like quantity certifying asymptotic stability. As we show here, invariance principles can be exploited to relax the conditions that those functions have to satisfy, so as to characterize convergence, stability, and robustness in the class of systems under study.  Future directions of research include the study of nonlinear reset maps, such as those capturing the phase-response curve of spiking neurons, as well as impulse-coupled oscillators connected via general graphs.

\balance
\bibliographystyle{IEEEtran}
\bibliography{Biblio,RGS,SAP}

\appendix
\section{Appendix}\label{sec:appendix}
The following result derives the solution to $\Gamma\tau_{s} = b$ with $\Gamma$ given in \eqref{eqn:Amatrix} and $b = \tb\one$ via Gaussian elimination. 
\begin{lemma}\label{eqn:tauSsolution}
For each $\e \in (-1,0)$, the solution $\tau_s$ to $\Gamma\tau_{s} = b$ with $\Gamma$ given in \eqref{eqn:Amatrix} and $b = \tb\one$ is such that its elements, denoted as $\tau_s^k$ for each $k \in \{1,2,\ldots,N\}$, are given by  $\tau_{s}^{k} = \frac{\sum_{i=0}^{N-k}(\e+1)^i}{\sum_{i=0}^{N-1}(\e+1)^i}\tb$.
\end{lemma}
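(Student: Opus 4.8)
The plan is to solve the system $\Gamma\tau_s = b$ constructively by exploiting the near-triangular, nearly constant structure of $\Gamma$, reading off the unique solution directly rather than computing any determinant. Throughout, I write $a := \e + 1 \in (0,1)$ and $S := \sum_{i=0}^{N-1} a^i$, so that the claimed solution reads $\tau_s^k = \frac{1}{S}\left(\sum_{i=0}^{N-k} a^i\right)\tb$, and I note once that since $a>0$ we have $S>0$, so no division by a potentially vanishing quantity ever occurs. First I would read off the first equation: because column $1$ of $\Gamma$ is supported only in row $1$, that row decouples and gives immediately $\tau_s^1 = \tb$, matching the claimed value $\frac{1}{S}\sum_{i=0}^{N-1}a^i\,\tb = \tb$. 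This isolates the remaining $(N-1)\times(N-1)$ system in $\tau_s^2,\dots,\tau_s^N$, whose rows read $(1+a)\tau_s^2 - a\tau_s^3 = \tb$ (row $2$), $a\tau_s^2 + \tau_s^k - a\tau_s^{k+1} = \tb$ for $3 \le k \le N-1$, and $a\tau_s^2 + \tau_s^N = \tb$ (row $N$).

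Next I would introduce the auxiliary constant $c := \tb - a\tau_s^2$ and observe that rows $3$ through $N$ all take the uniform form $\tau_s^k = c + a\tau_s^{k+1}$ under the convention $\tau_s^{N+1} = 0$ (which reproduces row $N$ exactly). Back-substituting from $k=N$ downward yields, by a one-line induction, $\tau_s^k = c\sum_{i=0}^{N-k}a^i$ for all $3 \le k \le N$. It then remains to pin down $c$, equivalently $\tau_s^2$, using the anomalous row $2$. Substituting $\tau_s^3 = c\sum_{i=0}^{N-3}a^i$ and $c = \tb - a\tau_s^2$ into row $2$ produces a single scalar equation in $\tau_s^2$; collecting terms and using the two telescoping identities $(1+a) + a^2\sum_{i=0}^{N-3}a^i = \sum_{i=0}^{N-1}a^i = S$ and $1 + a\sum_{i=0}^{N-3}a^i = \sum_{i=0}^{N-2}a^i$, the equation collapses to $S\,\tau_s^2 = \left(\sum_{i=0}^{N-2}a^i\right)\tb$.

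This gives $\tau_s^2$ in the claimed form, and after simplification $c = \tb - a\tau_s^2 = \tb/S$ (since $S - \sum_{i=1}^{N-1}a^i = a^0 = 1$); feeding $c = \tb/S$ back into $\tau_s^k = c\sum_{i=0}^{N-k}a^i$ reproduces the stated formula for every $k \ge 3$ as well, while the cases $k=1,2$ have already been verified. Because the elimination is forced at every step, this simultaneously establishes existence and uniqueness of $\tau_s$. Finally I would rewrite each partial sum $\sum_{i=0}^{m}a^i$ in closed form via Lemma~\ref{lem:consum1} to recover the alternative expression $\frac{(\e+1)^{N-k+1}-1}{(\e+1)^{N}-1}\tb$ used in the proof of Theorem~\ref{thm:stability}.

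The main obstacle is purely organizational rather than conceptual: rows $2$ and $N$ break the otherwise uniform pattern of rows $3,\dots,N-1$, so the recurrence must be set up so that these two boundary rows are absorbed cleanly — the $\tau_s^{N+1}=0$ convention for the last row, and the separate scalar solve coming from the anomalous coefficient $2+\e = 1+a$ in row $2$. Once the boundary rows are handled, everything else is the routine telescoping of geometric partial sums, so the only care needed is in the index bookkeeping of the sums $\sum_{i=0}^{N-k}a^i$ as they shift between adjacent rows.
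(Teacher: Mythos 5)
Your proposal is correct and follows essentially the same route as the paper: the paper performs bottom-up Gauss--Jordan elimination on the augmented matrix $[\Gamma\,|\,b]$, producing exactly the column-2 fill-in sums $\sum_i(\e+1)^i$ that your auxiliary constant $c = \tb - (\e+1)\tau_s^2$ encodes, and both arguments collapse to the same scalar equation $\bigl(\sum_{i=0}^{N-1}(\e+1)^i\bigr)\tau_s^2 = \bigl(\sum_{i=0}^{N-2}(\e+1)^i\bigr)\tb$ before recovering the remaining components. The only cosmetic difference is that you organize the elimination as a parametric recurrence with the convention $\tau_s^{N+1}=0$ rather than as explicit row operations.
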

\begin{proof}
The $N \times N$ matrix in \eqref{eqn:Amatrix} and the $N \times 1$ matrix $b = \tb\one$ leads to the augmented matrix $[\Gamma|b]$ given by 
\IfConf{\begin{align}
\left[\begin{array}{cccccc|c}
1 & 0 & 0 & 0 & \ldots & 0 & \tb \\
0 & (\e+2) & -(\e+1) & 0 & \ldots & 0 & \tb \\
0 & (\e+1) & 1 & -(\e+1)  & \ldots & 0 & \tb \\
\vdots & \vdots & \vdots & \ddots & \ddots & \ddots & \vdots  \\
0 & (\e+1) & 0 & 0  & \ddots & -(\e+1) & \tb \\
0 & (\e+1) & 0 & 0  & \hdots & 1 & \tb \\
\end{array}\right]. \label{eqn:augGb}
\end{align}}
{\begin{align}
\left[\begin{array}{ccccccc|c}
1 & 0 & 0 & 0 & 0 & \ldots & 0 & \tb \\
0 & (\e+2) & -(\e+1) & 0 & 0 & \ldots & 0 & \tb \\
0 & (\e+1) & 1 & -(\e+1) & 0 & \ldots & 0 & \tb \\
0 & (\e+1) & 0 & 1 & -(\e+1) & \ddots & 0 & \tb \\
\vdots & \vdots & \vdots & \ddots & \ddots & \ddots & \vdots & \vdots \\
0 & (\e+1) & 0 & 0 & 0 & \ddots & -(\e+1) & \tb \\
0 & (\e+1) & 0 & 0 & 0 & \hdots & 1 & \tb \\
\end{array}\right]. \label{eqn:augGb}
\end{align}}
To solve for $\tau_{s}^{k}$, we apply the Gauss-Jordan elimination technique to \eqref{eqn:augGb} to remove the elements $-(\e+1)$ above the diagonal. Starting from the $N$-th row to remove the $-(\e+1)$ component in the $N-1$ row, and continuing up to the second row, gives
\IfConf{\begin{align}
\left[\begin{array}{cccccc|c}
1 & 0 & 0 & 0  & \ldots & 0 & \tb \\
0 & \sum_{i = 0}^{N-1} (\e+1)^{i} & 0 & 0  & \ldots & 0 & \sum_{i = 0}^{N-2} (\e+1)^{i} \tb \\
0 & \sum_{i = 1}^{N-2} (\e+1)^{i} & 1 & 0  & \ldots & 0 & \sum_{i = 0}^{N-3} (\e+1)^{i} \tb \\
\vdots & \vdots & \vdots & \ddots  & \ddots & \vdots & \vdots \\
0 & \sum_{i = 1}^{2} (\e+1)^{i}& 0 & 0  & \ddots & 0 & \tb + (1+\e)\tb \\
0 & (\e+1) & 0 & 0  & \hdots & 1 & \tb \\
\end{array}\right]. \label{eqn:augGbp}
\end{align}}{\begin{align}
\left[\begin{array}{ccccccc|c}
1 & 0 & 0 & 0 & 0 & \ldots & 0 & \tb \\
0 & \sum_{i = 0}^{N-1} (\e+1)^{i} & 0 & 0 & 0 & \ldots & 0 & \sum_{i = 0}^{N-2} (\e+1)^{i} \tb \\
0 & \sum_{i = 1}^{N-2} (\e+1)^{i} & 1 & 0 & 0 & \ldots & 0 & \sum_{i = 0}^{N-3} (\e+1)^{i} \tb \\
0 & \sum_{i = 1}^{N-3} (\e+1)^{i} & 0 & 1 & 0 & \ddots & 0 & \sum_{i = 0}^{N-4} (\e+1)^{i} \tb \\
\vdots & \vdots & \vdots & \ddots & \ddots & \ddots & \vdots & \vdots \\
0 & (\e+1)^{2} + (\e+1) & 0 & 0 & 0 & \ddots & 0 & \tb + (1+\e)\tb \\
0 & (\e+1) & 0 & 0 & 0 & \hdots & 1 & \tb \\
\end{array}\right]. \label{eqn:augGbp}
\end{align}}
Denoting the augmented matrix in \eqref{eqn:augGbp} as $[\Gamma'|b']$, with  $\tau_{s}^{1} = \tb$ and $\tau_{s}^{2} = \frac{\sum_{i=0}^{N-2}(\e+1)^i}{\sum_{i=0}^{N-1}(\e+1)^i}\tb$, the solution for each element of $\tau_{s}^{k}$ with $k > 2$ can be derived from \eqref{eqn:augGb} as $\Gamma'_{k,2}\tau^{2}_{s} + \tau^{k}_{s} = b'_{k}$ where $\Gamma'_{k,2}$ denotes the $(k,2)$ entry of $\Gamma'$. 
Noting that $\tau_{s}^{1}$ can be rewritten as $\tau_{s}^{1} = \frac{\sum_{i=0}^{N-1}(\e+1)^i}{\sum_{i=0}^{N-1}(\e+1)^i}\tb$ leads to $\tau_{s}^{k} = \frac{\sum_{i=0}^{N-k}(\e+1)^i}{\sum_{i=0}^{N-1}(\e+1)^i}\tb$\NotForConf{\footnote{For example consider $k = 3$, the expression reduces to $\sum_{i = 1}^{N - 2} (\e + 1)^{i} \tau_{s}^{2} + \tau_{s}^{3} = \sum_{i = 0}^{N - 3} (\e + 1)^{i} \tb $ which leads to
\begin{align*}
\tau_{s}^{3} &= \sum_{i = 0}^{N - 3} (\e + 1)^{i} \tb - \sum_{i = 1}^{N - 2} (\e + 1)^{i} \tau_{s}^{2} 
= \frac{\sum_{i=0}^{N-3}(\e+1)^i \sum_{i=0}^{N-1}(\e+1)^i - (\e + 1) \sum_{i=0}^{N-3}(\e+1)^i \sum_{i=0}^{N-2}(\e+1)^i}{\sum_{i=0}^{N-1}(\e+1)^i}\tb \\
&= \frac{\sum_{i=0}^{N-3}(\e+1)^i \left[\sum_{i=0}^{N-1}(\e+1)^i - (\e + 1) \sum_{i=0}^{N-2}(\e+1)^i \right]}{\sum_{i=0}^{N-1}(\e+1)^i}\tb 
= \frac{\sum_{i=0}^{N-3}(\e+1)^i}{\sum_{i=0}^{N-1}(\e+1)^i}\tb 
\end{align*}}}. 
\end{proof}
\begin{lemma}\label{lem:consum1}
For each $x \neq 1$, and $m,n \in \nats$ such that $n-1 \geq m$, the finite sum $\sum_{i=m}^{n-1} x^i$ satisfies
$
\sum_{i=m}^{n-1} x^i = \frac{x^n - x^m}{x-1}.
$
\end{lemma}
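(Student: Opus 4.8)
The plan is to prove this by the standard telescoping argument for a finite geometric series, which requires no more than multiplying the sum by $(x-1)$ and tracking the cancellation. First I would fix notation, setting $S := \sum_{i=m}^{n-1} x^i$, and note that the hypothesis $n-1 \geq m$ guarantees the index range is nonempty so that $S$ is well defined. The key algebraic step is to form the product $(x-1)S$ and exploit that multiplication by $x$ acts as an index shift on the sum.

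Concretely, I would compute
\begin{align*}
(x-1)S &= x\sum_{i=m}^{n-1} x^i - \sum_{i=m}^{n-1} x^i
       = \sum_{i=m}^{n-1} x^{i+1} - \sum_{i=m}^{n-1} x^i
       = \sum_{i=m+1}^{n} x^i - \sum_{i=m}^{n-1} x^i,
\end{align*}
where the last equality is just a reindexing of the first sum ($i \mapsto i+1$). The two sums share all terms with index from $m+1$ to $n-1$, which therefore cancel; what survives is the top term $x^n$ from the first sum and the bottom term $-x^m$ from the second, giving $(x-1)S = x^n - x^m$.

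Finally, since $x \neq 1$ the factor $x-1$ is nonzero, so I would divide through to obtain $S = \frac{x^n - x^m}{x-1}$, which is the claimed identity. I do not anticipate any genuine obstacle here: the only points requiring care are ensuring the sum is nonempty (handled by $n-1 \geq m$) and verifying that the reindexing and cancellation are carried out over the correct ranges; the division is justified precisely by the standing hypothesis $x \neq 1$. An alternative route, in case a self-contained inductive presentation is preferred, would be to factor $S = x^m \sum_{i=0}^{n-1-m} x^i$ and prove the base-point formula $\sum_{i=0}^{p} x^i = \frac{x^{p+1}-1}{x-1}$ by induction on $p$, but the telescoping computation above is shorter and avoids an explicit induction.
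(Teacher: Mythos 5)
Your proposal is correct and follows essentially the same route as the paper's proof: both multiply the sum $S$ by $x$ (equivalently, form $(x-1)S$), cancel the overlapping terms $x^{m+1},\ldots,x^{n-1}$, and divide by the nonzero factor $x-1$. The only difference is presentational — you phrase the cancellation via an explicit reindexing $i \mapsto i+1$, while the paper writes out the terms and subtracts — so there is nothing further to add.
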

\IfConf{
For a proof of Lemma~\ref{lem:consum1} see \cite{Phillips2013TechReport}.
}{
\begin{proof}
Let
$
S_n = \sum_{i=m}^{n-1} x^i = x^m + x^{m+1} + x^{m+2} + \ldots + x^{n-1} $
multiply by $x$ to get
$
xS_n = x^{m+1} + x^{m+2} + x^{m+3}+ \ldots + x^{n} 
$
Subtracting these expressions leads to 
\IfConf{$xS_n - S_n= (x^{m+1} + x^{m+2} + x^{m+3}+ \ldots + x^{n} )-(x^{m+1} + x^{m+2} + x^{m+3}+ \ldots + x^{n-1})$ 
}{\begin{align*}
xS_n - S_n= (x^{m+1} + x^{m+2} + x^{m+3}+ \ldots + x^{n} )-(x^{m+1} + x^{m+2} + x^{m+3}+ \ldots + x^{n-1}). 
\end{align*}}
Then, it follows that
$
S_n = \frac{x^n - x^m}{x-1}.
$
\EndPF
\end{proof}}
\begin{lemma}\label{lem:consum2}
For each $x \neq 1$, \seannew{and each $m,N \in \nats$ such that $N \geq m$}, the finite sum $\sum_{n=m}^N\sum_{i=0}^{N-n}x^i$ satisfies
\IfConf{$\sum_{n=m}^N\sum_{i=0}^{N-n}x^i = \frac{x^{N-m+2} +(m-N-2)x + (N -m+1)}{(x-1)^2}.$}{\begin{eqnarray}\sum_{n=m}^N\sum_{i=0}^{N-n}x^i = \frac{x^{N-m+2} +(m-N-2)x + (N -m+1)}{(x-1)^2}.\label{eqn:doublesum2} \end{eqnarray}}
\end{lemma}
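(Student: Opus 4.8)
The plan is to reduce the double sum to single geometric sums and invoke the preceding Lemma~\ref{lem:consum1} twice, finishing with a short algebraic simplification. Specifically, I would first evaluate the inner sum: applying Lemma~\ref{lem:consum1} with lower index $0$ and upper index $N-n$ gives $\sum_{i=0}^{N-n} x^i = \frac{x^{N-n+1}-1}{x-1}$. Substituting this into the outer sum turns the problem into
\begin{equation}
\non \sum_{n=m}^N \sum_{i=0}^{N-n} x^i = \frac{1}{x-1}\left( \sum_{n=m}^N x^{N-n+1} - \sum_{n=m}^N 1 \right),
\end{equation}
so the whole task now rests on two elementary single sums.

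The constant sum is immediate: $\sum_{n=m}^N 1 = N-m+1$. For the remaining sum $\sum_{n=m}^N x^{N-n+1}$ I would perform the reindexing $k = N-n+1$, which runs from $k=1$ (at $n=N$) to $k=N-m+1$ (at $n=m$), yielding $\sum_{k=1}^{N-m+1} x^k$. A second application of Lemma~\ref{lem:consum1}, now with lower index $1$ and upper index $N-m+1$, gives $\sum_{k=1}^{N-m+1} x^k = \frac{x^{N-m+2}-x}{x-1}$.

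Combining these over the common denominator $(x-1)^2$ produces the numerator $x^{N-m+2} - x - (N-m+1)(x-1)$, which I would expand and regroup the linear terms to obtain $x^{N-m+2} - (N-m+2)x + (N-m+1)$. The claimed form in \eqref{eqn:doublesum2} then follows from the sign identity $-(N-m+2) = m-N-2$, matching the coefficient of $x$ exactly.

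None of these steps is genuinely hard; the main thing to get right is the index bookkeeping. The two places most prone to error are the shift in the lower limit of the geometric series when passing from $\sum_{k=0}$ to $\sum_{k=1}$ (which is exactly why Lemma~\ref{lem:consum1} is stated with a general lower index $m$, and why I would use it in that generality rather than subtracting off the $k=0$ term by hand), and the final regrouping that identifies $m-N-2$ with $-(N-m+2)$. An alternative route would be induction on $N-m$, but the direct geometric-sum computation is shorter and avoids an inductive base-case/step split, so I would present that one.
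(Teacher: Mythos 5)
Your proof is correct, but it follows a genuinely different route from the paper's. The paper never invokes Lemma~\ref{lem:consum1} inside this proof: instead it expands the double sum term by term, counting how many times each power of $x$ occurs, to collapse it into the arithmetico-geometric sum $\sum_{i=1}^{N-m+1} i\, x^{N-i-m+1}$, and then forms the second difference $x^{2}S_{n}-2xS_{n}+S_{n}=(x-1)^{2}S_{n}$, whose expansion telescopes to $x^{N-m+2}+(m-N-2)x+(N-m+1)$. You instead evaluate the inner geometric sum in closed form via Lemma~\ref{lem:consum1} (with lower index $0$), split the outer sum into $\sum_{n=m}^{N}x^{N-n+1}$ and the constant sum $N-m+1$, reindex the former to $\sum_{k=1}^{N-m+1}x^{k}$, and apply Lemma~\ref{lem:consum1} a second time with lower index $1$ — each application's hypothesis ($N-n\geq 0$ and $N\geq m$, respectively) holds on the stated range. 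Your route is shorter and exploits the generality with which Lemma~\ref{lem:consum1} is stated (the arbitrary lower index), reducing the whole argument to a two-line algebraic recombination over the common denominator $(x-1)^{2}$; the paper's route is self-contained (it would survive even if Lemma~\ref{lem:consum1} were weakened to lower index $0$) but pays for that with heavier index bookkeeping in the term-by-term expansion and in the triple cancellation. Your final regrouping, $x^{N-m+2}-x-(N-m+1)(x-1)=x^{N-m+2}-(N-m+2)x+(N-m+1)$ together with $-(N-m+2)=m-N-2$, matches \eqref{eqn:doublesum2} exactly, so no gap remains.
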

\IfConf{
For a proof of Lemma~\ref{lem:consum2} see \cite{Phillips2013TechReport}.
}{\begin{proof}
Let $S_{n} = \sum_{n=m}^N\sum_{i=0}^{N-n}x^i$ as in \eqref{eqn:doublesum2}. Expanding $S_{n}$ leads to 
$
S_{n} = \sum_{i=0}^{N-m}x^i + \sum_{i=0}^{N-(m+1)}x^i +\sum_{i=0}^{N-(m+2)}x^i +  \sum_{i=0}^{N-(m+3)}x^i + \ldots + \sum_{i=0}^{1}x^i + \sum_{i=0}^{0}x^i
$
Then, expanding the sum $\sum_{i=0}^{N-m}x^i = x^{N-m} + \sum_{i=0}^{N-(m+1)}x^i$ leads to
\begin{align*}
S_{n} &= x^{N-m} + 2\sum_{i=0}^{N-(m+1)}x^i +\sum_{i=0}^{N-(m+2)}x^i +  \sum_{i=0}^{N-(m+3)}x^i + \ldots + \sum_{i=0}^{1}x^i + \sum_{i=0}^{0}x^i.
\end{align*}
Expanding $\sum_{i=0}^{N-(m+1)}x^i = x^{N-(m+1)} + \sum_{i=0}^{N-(m+2)}x^i$ leads to
\begin{align*}
S_{n} = x^{N-m} + 2x^{N-(m+1)} +3\sum_{i=0}^{N-(m+2)}x^i +  \sum_{i=0}^{N-(m+3)}x^i + \ldots + \sum_{i=0}^{1}x^i + \sum_{i=0}^{0}x^i.
\end{align*}
\IfConf{Continuing this summation expansion, it follows that}{The next two sums follow similarly and we arrive to
\begin{align*}
\sum_{n=m}^N\sum_{i=0}^{N-n}x^i &= x^{N-m} + 2x^{N-(m+1)} +3x^{N-(m+2)} +  4x^{N-(m+3)} + \ldots + \sum_{i=0}^{1}x^i + \sum_{i=0}^{0}x^i.
\end{align*}
Proceeding this way for each sum and noticing that there are exactly $(N-m)$ summations of the form $\sum_{i=0}^{1}x^i$, it follows then that
\begin{align*}
\sum_{n=m}^N\sum_{i=0}^{N-n}x^i &= x^{N-m} + 2x^{N-(m+1)} +3x^{N-(m+2)} +  4x^{N-(m+3)} + \ldots + (N-m)x^{1} + (N-m+1)\sum_{i=0}^{0}x^i
\end{align*}
and finally }
$$
S_{n} = x^{N-m} + 2x^{N-(m+1)} +3x^{N-(m+2)} +  4x^{N-(m+3)} + \ldots + (N-m)x^{1} +  (N-m+1)x^{0} 
$$
which reduces to
$\sum_{n=m}^N\sum_{i=0}^{N-n}x^i = \sum_{i=1}^{N-m+1}i x^{N-i-m+1}. $
It follows that 
\begin{align*}
xS_{n} &= x^{N-m+1} + 2x^{N-m} +3x^{N-(m+1)} +  4x^{N-(m+2)} + \ldots + (N-m)x^{2} +  (N-m+1)x^{1}\\
x^{2}S_{n} &= x^{N-m+2} + 2x^{N-m+1} +3x^{N-m} +  4x^{N-m-1} + \ldots + (N-m)x^{3} +  (N-m+1)x^{2}.
\end{align*}
Then, 
\begin{align*}
x^{2}S_{n}-2xS_{n}+S_{n} &=  (x^{N-m+2} + 2x^{N-m+1} +3x^{N-m} +  4x^{N-m-1} + \ldots + (N-m)x^{3} +  (N-m+1)x^{2}) \\ & - 2(x^{N-m+1} + 2x^{N-m} +3x^{N-(m+1)} +  4x^{N-(m+2)} + \ldots + (N-m)x^{2} +  (N-m+1)x^{1}) \\ & + (x^{N-m} + 2x^{N-(m+1)} +3x^{N-(m+2)} + 4x^{N-(m+3)} + \ldots + (N-m)x^{1} +  (N-m+1)x^{0}).
\end{align*}
which reduce to
$
(x-1)^{2}S_{n} = x^{N-m+2} + (N-m+1) + (m-N-2)x,
$
leading to 
$
S_{n} = \frac{x^{N-m+2} + (N-m+1) + (m-N-2)x}{(x-1)^{2}}.
$\EndPF
\end{proof}}
%
%
%
%
%
%
%
%
%
%
%
%
%
%
\IfConf{
\begin{IEEEbiography}[{\includegraphics[width=1.in,keepaspectratio]{Phillips_2015biopic_small.eps}}]{Sean Phillips} received his B.S. and M.S. with a focus on dynamics and controls in Mechanical Engineering from the University of Arizona (UA) in Tucson, Arizona. Currently, he is pursuing a Ph.D. in the area of control systems in the Department of Computer Engineering from the University of California, Santa Cruz (UCSC). 

His current research interests include modeling, stability, control and robustness analysis of hybrid systems. 
\end{IEEEbiography}
\begin{IEEEbiography}[{\includegraphics[width=1.in,keepaspectratio]{Sanfelice5x7a_2012smallIEEE.eps}}]{Ricardo G. Sanfelice} received the B.S. degree in Electronics Engineering from the Universidad Nacional de Mar del Plata, Buenos Aires, Argentina, in 2001. He joined the Center for Control, Dynamical Systems, and Computation at the University of California, Santa Barbara in 2002, where he received his M.S. and Ph.D. degrees in 2004 and 2007, respectively. During 2007 and 2008, he was a Postdoctoral Associate at the Laboratory for Information and Decision Systems at the Massachusetts Institute of Technology. He visited the Centre Automatique et Systemes at the Ecole de Mines de Paris for four months.  He is Associate Professor of Computer Engineering at the University of California, Santa Cruz. Prof. Sanfelice is the recipient of the 2013 SIAM Control and Systems Theory Prize, the National Science Foundation CAREER award, the Air Force Young Investigator Research Award, and the 2010 IEEE Control Systems Magazine Outstanding Paper Award.  His research interests are in modeling, stability, robust control, observer design, and simulation of nonlinear and hybrid systems with applications to power systems, aerospace, and biology.
\end{IEEEbiography}}

\end{document}